\newcommand\abs[1]{\lvert #1\rvert}
\newcommand\decomposition{de\-com\-po\-si\-tion}
\newtheorem{THM}{Theorem}[section]
\newtheorem*{THMWQO}{Theorem~\ref{thm:main-wqo}}
\newtheorem*{THMTD}{Theorem~\ref{thm:main-td}}
\newtheorem*{THMSD}{Theorem~\ref{thm:main-sd}}
\newtheorem*{THMLC}{Theorem~\ref{thm:main-long-circuit}}
\newtheorem{LEM}[THM]{Lemma}
\newtheorem{COR}[THM]{Corollary}
\newtheorem{PROP}[THM]{Proposition}
\newcommand\F{\mathbb F}
\newcommand\M{\mathcal M}
\newcommand\N{\mathcal N}
\newcommand\I{\mathcal I}
\newcommand\cd{\operatorname{cd}}
\newcommand\dd{\operatorname{dd}}
\newcommand\cdd{\operatorname{cdd}}
\newcommand\dist{\operatorname{dist}}
\newcommand\td{\operatorname{td}}
\newcommand\bd{\operatorname{bd}}
\newcommand\rd{\operatorname{rd}}
\begin{document}
\title{Branch-depth: Generalizing tree-depth of graphs}
\author{Matt DeVos\thanks{Supported by an NSERC Discovery Grant (Canada).}}
\affil{Department of Mathematics, Simon Fraser University, Burnaby,~Canada}
\author[23]{O-joung Kwon\thanks{Supported by the National Research Foundation of Korea (NRF) grant funded by the Ministry of Education (No. NRF-2018R1D1A1B07050294).}\thanks{Supported by the Institute for Basic Science (IBS-R029-C1).}}
\affil{Department of Mathematics, Incheon National University, Incheon,~Korea}
\author[$\dagger$34]{Sang-il Oum}
\affil[3]{Discrete Mathematics Group,
  Institute for Basic Science (IBS), Daejeon,~Korea}
\affil[4]{Department of Mathematical Sciences, KAIST, Daejeon,~Korea}
\affil[ ]{\small \texttt{mdevos@sfu.ca}, \texttt{ojoungkwon@gmail.com}, \texttt{sangil@ibs.re.kr}}
\date{\today}
\maketitle
\begin{abstract}
We present a concept called the \emph{branch-depth} of a connectivity function, that generalizes the tree-depth of graphs.
Then we prove two theorems showing that this concept aligns closely with the notions of tree-depth and shrub-depth of graphs as follows.
For a graph $G = (V,E)$ and a subset $A $ of $E$ we let $\lambda_G (A)$ be the number of vertices incident with an edge in $A$ and an edge in $E \setminus A$.
For a subset $X$ of $V$, 
let $\rho_G(X)$ be the rank of the adjacency matrix between $X$ and $V \setminus X$ over the binary field.
We prove that a class of graphs has bounded tree-depth if and only if the corresponding class of functions $\lambda_G$ has bounded branch-depth
and similarly a class of graphs has bounded shrub-depth if and only if the corresponding class of functions $\rho_G$ has bounded branch-depth, which we call
the rank-depth of graphs.

Furthermore we investigate various potential generalizations of
tree-depth to matroids
and prove that matroids representable over a fixed finite field
having no large circuits 
are well-quasi-ordered by restriction.
\end{abstract}

\section{Introduction}\label{sec:intro}
The tree-width is one of the best-studied width parameters
for graphs, measuring how close a graph is to being a tree.
Similarly, the tree-depth is a `depth parameter' of graphs, 
measuring how close a graph is to being a star. We remark that graphs in this paper are allowed to have parallel edges and loops. A \emph{simplification} of a graph $G$ is a simple graph obtained from $G$ by deleting parallel edges and loops.

Definitions of both tree-width and tree-depth of graphs
involve vertices and edges
and so it is non-trivial to extend them to other discrete structures, such as matroids.
Actually Hlin\v{e}n\'y and Whittle~\cite{HW2003} were able to generalize the tree-width of graphs to matroids but it is not so obvious why it is related to the tree-width of graphs.

One may ask whether we can define an alternative depth parameter
of graphs that only uses some connectivity function.
If so, then it
can be easily extended to matroids.

For tree-width, this was done by Robertson and Seymour~\cite{RS1991}.
They defined the branch-width of graphs
and showed that a class of graphs has bounded branch-width
if and only if it has bounded tree-width.
Following Ding and Oporowski~\cite{DO1996},
we will say that two parameters $\alpha$, $\beta$ of some objects
are \emph{tied}
if there exists a function $f$ such that for every object $G$,
$\alpha(G)\le f(\beta(G))$ and $\beta(G)\le f(\alpha(G))$.
So branch-width and tree-width
are tied for graphs. 

The decomposition tree for branch-width of graphs, called the  \emph{branch-\decomposition}, is defined in terms of edges only.
When measuring the width of a branch-\decomposition,
it uses  a function $\lambda_G$ of a graph $G$ defined as follows: for a set $X$ of edges, $\lambda_G(X)$ is the number of vertices incident with both an edge in $X$ and an edge not in $X$. It turns out that $\lambda_G$ has all the nice properties such as symmetricity and submodularity and so such a function will be called a \emph{connectivity function}. The detailed definition will be reviewed in Section~\ref{sec:def}.
Robertson and Seymour~\cite{RS1991} defined branch-width not only for graphs
but also for any discrete structures admitting a connectivity function.

We will define the \emph{branch-depth} of a connectivity function
in Section~\ref{sec:def}.
This single definition will produce various depth parameters
for graphs and matroids,
by plugging in different connectivity functions.
We summarize them in Table~\ref{tab:depth}.

\begin{table}
  \centering
  \begin{tabular}{cccc}
    \toprule
    Object
    & 
    \parbox[center]{5em}{Connectivity\\function $f$}
      &Branch-width of $f$
    & Branch-depth of $f$\\
    \midrule
    Graph $G$
    & $\lambda_G$ & Branch-width of $G$ & Branch-depth of $G$
    \\
    Matroid $M$
    & $\lambda_M$  
    &  Branch-width of $M$ & Branch-depth  of $M$
    \\
    Graph $G$
    & $\rho_G$ & Rank-width of $G$  & Rank-depth of $G$\\
    \bottomrule
  \end{tabular}
  \caption{Instances of branch-width and branch-depth. Branch-width of graphs and matroids and rank-width of graphs are defined as branch-depth of corresponding connectivity functions. Similarly branch-depth of graphs and matroids and rank-depth of graphs are defined as branch-depth of corresponding connectivity functions.}
  \label{tab:depth}
\end{table}

Our first goal is to make sure that these new depth parameters
are tied to existing ones.
First we show that for graphs,
branch-depth and tree-depth are tied 
in Section~\ref{sec:treedepth}.
\begin{THMTD}
  Let $G$ be a graph,
  $k$ be its branch-depth,
  and  $t$ be its tree-depth. Then
  \[
  k-1\le t\le \max(2k^2-k+1,2).
  \]
\end{THMTD}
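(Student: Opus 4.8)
The plan is to prove the two inequalities separately, each by an explicit construction that turns one kind of decomposition into the other.

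\textbf{The inequality $k\le t+1$.} Start from an optimal tree-depth decomposition of $G$, i.e.\ a rooted forest $F$ of height $t$ such that $G$ is a subgraph of the closure $\operatorname{clos}(F)$ of $F$ (the graph obtained from $F$ by joining each vertex to all of its ancestors). First delete from $F$ every vertex incident with no edge of $G$; this neither increases the height nor destroys $G\subseteq\operatorname{clos}(F)$. Then build a tree $T$ by attaching, for each edge $e=xy$ of $G$ with $x$ an ancestor of $y$ in $F$, a new leaf $\mu(e)$ as a child of $y$; if $F$ is disconnected, also add one vertex $r$ adjacent to all roots of $F$ and root $T$ at $r$, and otherwise root $T$ at the root of $F$. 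I would then check: (i) the leaves of $T$ are exactly the vertices $\mu(e)$, so they biject with $E(G)$; (ii) every leaf lies within distance $t+1$ of the root; and (iii) for every node $u$ of $T$ and every union $A$ of subtrees hanging below $u$, any vertex incident both with an edge of $A$ and with an edge of $E(G)\setminus A$ is one of the at most $t$ ancestors of $u$ in $F$, whence $\lambda_G(A)\le t$. The only delicate point is (iii), where one observes that such a vertex can be neither the lower endpoint of one of the edges of $A$ nor a strict $F$-descendant of $u$. Granting (i)--(iii), the resulting decomposition has width at most $t$ with all leaves within distance $t+1$ of the root, so $\bd(G)\le t+1$.

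\textbf{The inequality $t\le\max(2k^2-k+1,2)$: setup.} It suffices to prove $\td(G)\le 2k^2-k+1$ for $G$ connected with at least three vertices: components on at most two vertices have tree-depth at most $2$, and restricting a branch-decomposition of $\lambda_G$ to the subtree spanned by the leaves of one component, re-rooted at the point nearest the old root, only decreases the width and the depth; hence a general $G$ follows componentwise and the ``$\max$'' with $2$ absorbs the small components. So fix a branch-decomposition $(T,r)$ witnessing $\bd(G)\le k$: rooted at $r$, its leaves biject with $E(G)$ via $\mu$, every leaf is within distance $k$ of $r$, and the union of the edge sets below any family of child subtrees of a node has $\lambda_G$-value at most $k$. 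For a vertex $x$, let $t(x)$ be the deepest node below which all edges at $x$ lie; this is well defined because these edge sets are nested along any root--leaf path, and for every edge $e=xy$ both $t(x)$ and $t(y)$ lie on the path from $r$ to $\mu(e)$ and so are comparable in $T$. Now build the rooted forest $F$ by arranging, for each node $v$, the set $X_v:=\{x\in V(G):t(x)=v\}$ into a path, and hanging this path below the path of the nearest strict ancestor $v'$ of $v$ with $X_{v'}\neq\emptyset$. Comparability of $t(x)$ and $t(y)$ makes the two ends of each edge ancestor-related in $F$, so $G\subseteq\operatorname{clos}(F)$, and the height of $F$ is at most $\sum_v\abs{X_v}$ along a single root--leaf chain of $T$.

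\textbf{The key estimate and the main obstacle.} It remains to bound $\abs{X_v}$. If $v=\mu(e)$ is a leaf, then $x\in X_v$ forces $x$ to be a degree-$1$ endpoint of $e$, and in a connected graph on at least three vertices an edge has at most one such endpoint, so $\abs{X_{\mu(e)}}\le 1$. For a general node $v$, each $x\in X_v$ has edges in at least two child subtrees of $v$; regard the children of $v$ as the vertices of a multigraph $H_v$ carrying one edge per $x\in X_v$ joining two children whose subtrees contain edges at $x$. Then every bipartition of the children produces a cut of $H_v$ of size at most the $\lambda_G$-value of the corresponding union of child subtrees, hence at most $k$; since a multigraph in which every cut---the empty cut included---has at most $k$ edges has at most $2k-1$ edges (a uniformly random bipartition crosses an expected half of them, and equality would force even the empty cut to cross $k\ge 1$ edges), we get $\abs{X_v}\le 2k-1$. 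A root--leaf chain of $T$ has at most $k$ non-leaf nodes together with at most one leaf (or at most $k$ nodes in all if it does not reach a leaf), so the height of $F$ is at most $k(2k-1)+1=2k^2-k+1$. The heart of the argument, and the step I expect to be hardest, is precisely this bound $\abs{X_v}\le 2k-1$: one must see that the width condition on \emph{unions} of child subtrees (not merely on single ones) is exactly what forbids more than $2k-1$ vertices of $G$ from ``branching'' at one node of the decomposition, and one must check that every bipartition used in the multigraph argument is genuinely charged by the width. Everything else is routine bookkeeping.
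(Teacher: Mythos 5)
Your proposal is correct and follows essentially the same route as the paper: the lower bound is the paper's construction of Lemma~\ref{lem:td2bd} (one leaf per edge attached at its lower endpoint, width bounded by the ancestors of the attachment node, with disconnected graphs handled there via Lemma~\ref{lem:conn} instead of an added root), and the upper bound rests on the identical probabilistic key estimate of Lemma~\ref{lem:treedepthlem} (at most $2k-1$ vertices branching at a node), which the paper feeds into an induction on the radius (Lemma~\ref{lem:bd2td}) while you build the forest directly by assigning each vertex to the deepest node of $T$ below which all its incident edges lie and summing $\abs{X_v}$ along a root--leaf chain, arriving at the same bound $2k^2-k+1$. The only repairs needed are cosmetic: vertices of $F$ with no incident edge of $G$ should be spliced out (children reattached to the parent) or pruned leaf-by-leaf rather than simply deleted, since deleting such an internal vertex can break an ancestor relation needed for $G$ to lie in the closure of $F$, and loops deserve the same explicit one-line treatment the paper gives them.
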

For a simple graph $G$, there is another interesting connectivity function
called the \emph{cut-rank function}, denoted by $\rho_G$. It is defined in terms of the rank
function of a submatrix of the adjacency matrix, which we will describe in detail in Section~\ref{sec:shrubdepth}.
Oum and Seymour~\cite{OS2004}
used the cut-rank function to define the rank-width of a simple graph.
Analogously we use the cut-rank function
to define the \emph{rank-depth} of a simple graph $G$
as the branch-depth of its cut-rank function $\rho_G$.
As a similar attempt to define a depth parameter comparable to rank-width,
Ganian, Hlin\v{e}n\'y, Ne\v{s}et\v{r}il, Obdr\v{z}\'alek, and
Ossona de Mendez~\cite{GHNOO2017}
defined the shrub-depth of a class of simple graphs.
In Section~\ref{sec:shrubdepth} we prove that 
having bounded rank-depth is equivalent to having bounded shrub-depth.
\begin{THMSD}
A class of simple graphs has bounded rank-depth if and only if it has bounded shrub-depth.
\end{THMSD}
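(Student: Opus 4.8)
The plan is to prove the two implications separately; each one is a translation between the two kinds of decomposition trees. The implication from bounded shrub-depth to bounded rank-depth is the easy direction, so I would dispatch it first. Suppose every graph in the class has an $(m,d)$-tree-model; fix $G$ in the class and such a model with tree $T$, labelling $\operatorname{lab}\colon V(G)\to[m]$, and signature $S$. I would use $T$ itself as the decomposition tree for $\rho_G$. For a node $t$ of $T$ and any two leaves $u,u'$ below $t$ with $\operatorname{lab}(u)=\operatorname{lab}(u')$, every leaf $w$ not below $t$ satisfies $\operatorname{LCA}(u,w)=\operatorname{LCA}(t,w)=\operatorname{LCA}(u',w)$, so $u$ and $u'$ have the same neighbours outside $V_t$ (the set of leaves below $t$); hence the adjacency matrix between $V_t$ and $V\setminus V_t$ has at most $m$ distinct rows, i.e.\ $\rho_G(V_t)\le m$. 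Since $T$ has height $d$ and every displayed cut has value at most $m$, the definition of branch-depth in Section~\ref{sec:def} gives $\rd(G)=\bd(\rho_G)$ bounded by a function of $m$ and $d$; as this holds for all $G$ in the class, the class has bounded rank-depth.

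For the converse, suppose $\bd(\rho_G)\le k$ for every $G$ in the class. First I would unfold a depth-$k$ decomposition of $\rho_G$ into a rooted partition tree $\mathcal T$ of $V(G)$: a node $t$ carries a vertex set $A_t$ (with $A_{\mathrm{root}}=V(G)$ and singletons at the leaves), the children of $t$ partition $A_t$, and along every root-to-leaf branch the cut-ranks encountered sum to at most $k$; in particular $\rho_G(A_t)\le k$ for every node $t$. After contracting the ``free'' splits (those of cut-rank $0$, namely splittings into unions of connected components), one checks that $\mathcal T$ may be taken to have height $H=H(k)$, since each remaining split costs at least $1$ against the budget $k$. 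I would then try to read off an $(m,H)$-tree-model from $\mathcal T$: at each node $t$, partition $A_t$ into at most $2^k$ classes according to the neighbourhood in $V\setminus A_t$, and label a vertex $v$ by the sequence of classes it falls into along its root-to-leaf branch, giving at most $(2^k)^H$ labels. The guiding observation is that for vertices $u,v$ with lowest common ancestor $s$ in $\mathcal T$, say $u\in A_{t_u}$ and $v\in A_{t_v}$ for distinct children $t_u,t_v$ of $s$, whether $uv\in E(G)$ depends only on the class of $u$ in $A_{t_u}$ and the class of $v$ in $A_{t_v}$ — because each class is defined via the full outside-neighbourhood, which refines every ``pairwise sibling'' partition.

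The hard part, and the main obstacle, is that a tree-model is governed by a single signature depending only on the two labels and the depth of their lowest common ancestor, whereas the adjacency across two sibling parts of a node $s$ may a priori also depend on \emph{which} ordered pair of siblings is involved, and $s$ may have unboundedly many children. To resolve this I would refine $\mathcal T$ so that at each node the children are grouped into a bounded number of ``types'', using the bounded cut-rank of each part to bound the number of essentially different interaction patterns, and insert one extra level per node separating the types; recording the type as a bounded extra coordinate of the label then makes the cross-sibling adjacency a function of the two labels and the common-ancestor depth alone. Bounding the number of types is the crux and is where I expect essentially all of the work to lie; it should proceed by induction on $k$, noting that the auxiliary ``class graph'' at a node has parts of bounded size each of bounded cut-rank, so that one further split lowers the rank-depth budget. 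Once this normalization is in place, verifying that the resulting labelled tree is a genuine tree-model is routine bookkeeping. (Alternatively, one could route the argument through the known equivalence between shrub-depth and SC-depth, reducing the statement to the equivalence of bounded rank-depth and bounded SC-depth, which in turn rests on the standard fact that a cut of rank $r$ can be neutralised by a bounded number of subset complementations.)
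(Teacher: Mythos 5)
Your first direction is essentially the paper's Lemma~\ref{lem:shrubbery2decomp}: use the shrubbery/tree-model tree itself as the decomposition and bound the cut-rank by the number of labels via a clone argument. One imprecision: the width of a node in a branch-depth decomposition is the maximum of $\rho_G$ over \emph{arbitrary unions} of the parts induced at that node, not just the displayed set $V_t$ of leaves below $t$; your argument only treats leaves $w$ not below $t$. The fix is immediate (two same-labelled vertices lying in chosen child subtrees are also clones with respect to vertices below $t$ in unchosen children, since the relevant LCA is $t$ in all cases, and one may assume by symmetry that the root side is not in the chosen union), but it should be said. Separately, your ``unfolding'' of a $(k,k)$-decomposition misstates what it provides: cut-ranks do \emph{not} sum to at most $k$ along a root-to-leaf branch; one only gets width at most $k$ at every node and radius (hence height) at most $k$. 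Fortunately those are the only facts your subsequent argument uses.

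The genuine gap is in the converse, and it is exactly the step you defer as ``where I expect essentially all of the work to lie.'' A shrubbery requires adjacency to be determined by the two labels and the distance alone, \emph{uniformly over the whole tree}, whereas your per-node classes are indexed arbitrarily part by part; with unboundedly many children, and with different nodes at the same depth, nothing so far bounds the number of distinct interaction patterns or makes them coherent. This is the mathematical heart of the paper's proof: Lemma~\ref{lem:realization} shows that for a partition of cut-rank at most $k$ the adjacency matrix relative to the partition has a set of at most $2^{2k}(2^{2k+2}-1)$ realization vectors covering all columns, and its proof needs the probabilistic selection Lemma~\ref{lem:root} to extract many pairwise dissimilar vertices whose dissimilarity witnesses avoid the selected union, producing a rank contradiction; Lemma~\ref{lem:nodeuniversal} then packages this as a bounded graph $H_v$ per node, and embedding every $H_v$ into one fixed loopy-universal graph $H$ (Alon's theorem, though any universal graph would do for boundedness) is what yields the global uniformity that your per-node ``types'' do not address (they only handle siblings of a single node, not pairs whose LCAs are different nodes at the same depth). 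Your proposed induction on $k$ (``one further split lowers the rank-depth budget'') has no mechanism behind it: refining a node's partition does not reduce any budget in the definition of branch-depth. The alternative SC-depth route is likewise only invoked, not proved. So the proposal has the right architecture but is missing the central lemma of the hard direction.
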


For matroids, we define the branch-depth of a matroid $M$
as the branch-depth of its connectivity function $\lambda_M$.
We will investigate three additional depth parameters, called the contraction depth,
the deletion depth, and the contraction-deletion depth for matroids.
Ding, Oporowski, and Oxley~\cite{DOO1995} investigated the contraction-deletion depth under the name \emph{type}.
A Venn diagram in Figure~\ref{fig:matroid}  shows containment relation and properties on 
classes of matroids with respect to the boundedness of various parameters
of matroids,
which will be proved in Theorems~\ref{thm:various-depth} and \ref{thm:cd},
and Corollaries~\ref{cor:dd} and \ref{cor:cddd}.
We will also show that no two of these depth parameters are tied.

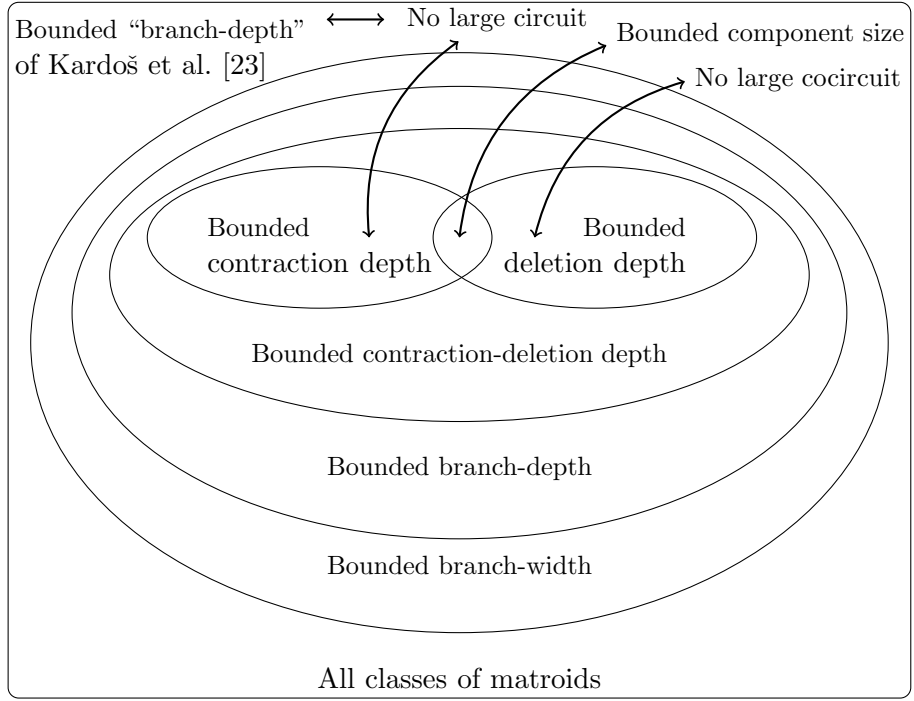
\begin{figure}[t]
  \centering
  \begin{tikzpicture}
\node[rounded corners, draw,text height = 9cm,minimum width=12cm] (main){All classes of matroids};
\node[ellipse, draw,  text height =5.2cm, yshift=.1cm,minimum width = 11.4cm,label={[anchor=south,above=6.5mm]270:\small Bounded branch-width}] at (main.center)  (semi) {};
\node[ellipse, draw,  yshift=.5cm, text height =4cm, minimum width = 10.3cm,label={[anchor=south,above=6.5mm]270:\small Bounded branch-depth}] at (main.center)  (semi) {};
\node[ellipse, draw, text height = 2.5cm, minimum width = 9.3cm,yshift=1cm,label={[anchor=south,above=6mm]270:\small Bounded contraction-deletion depth}] at (main.center) (active) {};
\node[ellipse, draw, align=right, anchor=east,minimum width=4.3cm,
text height=1cm,yshift=.5cm,
xshift=-0.7cm ] at (active.east) (non) {\small Bounded\\deletion depth};
\node[ellipse, draw,align=left, anchor=west, minimum width=4.3cm,
text height=1cm,yshift=.5cm,xshift=.5cm] at (active.west) (non2) {\small Bounded\\contraction depth};
\node at (4,4.2) {\small Bounded component size} edge [<->,thick, bend right=30] (0,1.5) ;
\node at (4.5,3.6) {\small No large cocircuit} edge [<->,thick, bend right=30] (1,1.5) ;
\node at (.5,4.4) {\small No large circuit} edge [<->,thick, bend right=30] (-1.2,1.5) ;
\node at (-1.9,4.4) [label={[below left,align=left]\small Bounded 
``branch-depth'' \\
of Kardo\v{s} et al.~\cite{KKLM2017}}] {} edge [<->,thick](-1, 4.4); 
  \end{tikzpicture}
  \caption{A Venn diagram of classes of matroids.}
  \label{fig:matroid}
\end{figure}

It is well known that a class of graphs has bounded tree-depth if and only if
it has no long path, see~\cite[4.4]{RS1985} or~\cite[Proposition 6.1]{NO2012}.
It is straightforward to prove the following analogue for matroids as follows.
\begin{THMLC}
  \begin{enumerate}[(i)]
  \item   A class of matroids has bounded contraction depth if and only if
  all circuits have bounded size.
  \item   A class of matroids has bounded deletion depth if and only if
  all cocircuits have bounded size.
  \end{enumerate}
\end{THMLC}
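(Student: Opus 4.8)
The plan is to deduce (ii) from (i) by duality and then to prove the two directions of (i) separately. For the reduction, note that $M^*/e=(M\setminus e)^*$, that a matroid is connected exactly when its dual is, and that $M$ and $M^*$ have the same connected components; a straightforward induction on $\abs{E(M)}$ then gives $\dd(M)=\cd(M^*)$. Since the cocircuits of $M$ are precisely the circuits of $M^*$, statement (ii) for a class $\mathcal M$ of matroids is exactly statement (i) for the class $\{\,M^*:M\in\mathcal M\,\}$, so it is enough to prove (i).

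For the forward direction of (i) I would show by induction on $d$ that $\cd(M)\le d$ forces every circuit of $M$ to have at most $h(d)$ elements, where $h(0)=0$ and $h(d)\le 2h(d-1)+2$. If $M$ is disconnected we pass to the component containing the circuit under consideration, so assume $M$ connected; if $\abs{E(M)}=1$ there is nothing to prove, and otherwise pick $e$ with $\cd(M/e)\le d-1$. Let $C$ be a circuit. If $e\in C$, then $C\setminus e$ is a circuit of $M/e$, so $\abs C\le h(d-1)+1$. If $e\notin C$, consider $N=(M/e)|C$. Since $c\in\operatorname{cl}_M(C\setminus c)\subseteq\operatorname{cl}_M((C\setminus c)\cup e)$ for each $c\in C$, no element of $C$ is a coloop of $N$; and because contracting one element lowers the rank by at most one, $N$ has nullity $1$ or $2$. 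For any basis $B$ of $N$ the fundamental circuits of the at most two elements outside $B$ must together cover $B$ — an uncovered element of $B$ would be a coloop of $N$ — so one of those fundamental circuits has at least $\tfrac12\abs B+1$ elements; being a circuit of $M/e$ it has at most $h(d-1)$ elements, whence $\abs C\le 2h(d-1)+2$. The recursion yields $h(d)=O(2^{d})$.

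For the backward direction of (i), which I expect to be the real obstacle, I would induct on $\ell$, the maximum size of a circuit of $M$, proving $\cd(M)\le g(\ell)$ with $g(1)=1$ and $g(\ell)=(\ell-1)+g(\ell-1)$, hence $g(\ell)=1+\binom{\ell}{2}$. Assuming $M$ connected with at least two elements, so $\ell\ge 2$, fix a circuit $C$ with $\abs C=\ell$ and an element $f\in C$. Contracting the elements of $C\setminus f$ one at a time, and attaching an optimal decomposition of each connected component $N$ of $M/(C\setminus f)$ beneath the partial decomposition obtained, gives
\[
\cd(M)\le(\ell-1)+\max_{N}\cd(N),
\]
the maximum over the connected components of $M/(C\setminus f)$. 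Circumference never increases under contraction, so each $N$ has all circuits of size at most $\ell$; the induction closes once we know that in fact each $N$ — equivalently $M/(C\setminus f)$ — has no circuit of size $\ell$. Because $f\in\operatorname{cl}_M(C\setminus f)$, a circuit of $M/(C\setminus f)$ with $\ell$ elements would have to be a circuit $D$ of $M$ with $D\cap C=\emptyset$ and $\operatorname{rank}_M(C\cup D)=\operatorname{rank}_M(C)+\operatorname{rank}_M(D)$. Thus everything reduces to the following statement, whose proof I expect to be the main difficulty: if $M$ is connected and $C$ is a largest circuit of $M$, then no circuit $D$ is disjoint from $C$ and ``skew'' to it in the sense $\operatorname{rank}_M(C\cup D)=\operatorname{rank}_M(C)+\operatorname{rank}_M(D)$. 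The idea behind this is that such a pair would make $M|(C\cup D)=(M|C)\oplus(M|D)$ while $M$, being connected, contains a circuit meeting both $C$ and $D$; repeatedly applying strong circuit elimination to splice $C$, $D$ and this bridging circuit should produce a circuit with more than $\ell$ elements, contradicting the maximality of $C$. For graphic matroids this specializes to the easy fact that a $2$-connected graph of circumference $\ell$ has no two vertex-disjoint cycles of length $\ell$ (one routes around both cycles through two connecting paths); arranging the circuit-elimination splicing in general so that an honestly oversized circuit emerges, rather than some short by-product of the elimination, is the delicate point.
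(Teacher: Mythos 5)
Your duality reduction and your forward direction are sound: the argument via $N=(M/e)|C$ having nullity at most $2$ and no coloops correctly shows that circuits have size at most exponential in the contraction depth, which is the same quantitative relationship as the paper's bound $\log_2 c\le \cd(M)$ (the paper gets it by showing that contracting any single element leaves a circuit of size at least $|C|/2$). Your scaffold for the backward direction is also essentially the paper's: contract a largest circuit (you contract $C\setminus f$ instead of $C$, which changes nothing material), use $\cd(M)\le |S|+\cd(M/S)$, and induct on the circumference to get a bound of order $\ell^2$.

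However, the step you yourself flag as ``the main difficulty'' is a genuine gap, and it is the heart of the matter. What you need is exactly (the relevant case of) the theorem of Seymour that the paper invokes without proof (Theorem 5.5, quoted from Ding--Oporowski--Oxley): if $C$ is a longest circuit of a connected matroid $M$, then $M/C$ has no circuit of size at least $|C|$. A circuit $D$ of $M$ disjoint from $C$ and skew to $C$ is precisely a circuit of $M/C$ of size $|D|$, so your reduction lands on the case $|D|=\ell$ of that theorem; but you offer no proof, only the suggestion of ``repeatedly applying strong circuit elimination,'' which is the naive approach and, as you concede, gives no control on the size of the circuits it produces. Worse, your lemma as literally stated (no circuit of \emph{any} size disjoint from and skew to a largest circuit of a connected matroid) is false: let $G$ consist of a $20$-cycle $v_1\cdots v_{20}$, a vertex-disjoint triangle $t_1t_2t_3$, and the two edges $t_1v_1$ and $t_2v_{11}$. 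Then $G$ is $2$-connected, so $M(G)$ is connected; every cycle using both connector edges has length at most $14$, so the $20$-cycle is a largest circuit, yet the triangle is a circuit disjoint from it and skew to it (they are vertex-disjoint subgraphs). Only the case $|D|=|C|$ is true, and that is Seymour's theorem, a nontrivial known result. If you cite it, as the paper does, your argument closes; as written, the backward direction of (i) (and hence of (ii)) is not proved.
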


The contraction depth is also tied to the ``branch-depth''
introduced by Kardo\v{s}, Kr\'al', Liebenau, and Mach~\cite{KKLM2017}.
The authors would like to apologize for this unfortunate conflict of terms.
To distinguish with our branch-depth,
let us call theirs the \emph{KKLM-depth} of a matroid.
They prove that if $c$ is the size of a largest circuit of a matroid $M$,
then the KKLM-depth is at most $c^2$
and at least $\log_2 c$. By Theorem~\ref{thm:main-long-circuit},
KKLM-depth and contraction depth are tied.
Interestingly, they showed that unlike the contraction depth, KKLM-depth does not increase
when we take a minor. Moreover there is an interesting algorithmic result on integer programming parameterized by their branch-depth due to Chan, Cooper, Kouteck\'y, Kr{\'a}l', and Pek{\'a}rkov{\'a}~\cite{CCKKP2019}.

Robertson and Seymour~\cite{RS1985} stated that graphs of bounded tree-depth
are well-quasi-ordered by the subgraph relation
and Ding~\cite{Ding1992} proved that they are well-quasi-ordered by the induced subgraph relation. %
As a generalization, 
Ganian, Hlin{\v{e}}n{\'y}, Ne{\v{s}}et{\v{r}}il, Obdr\v{z}\'{a}lek, and Ossona de Mendez~\cite{GHNOO2017} proved that a class of graphs of bounded shrub-depth is well-quasi-ordered by the induced subgraph relation.
We prove an analogous theorem for matroids as follows.
A matroid $N$ is a \emph{restriction} of a matroid $M$
if $N=M\setminus X$ for some $X$.
A class $\mathcal M$ of matroids is \emph{well-quasi-ordered} by restriction
if every infinite sequence $M_1$, $M_2$, $\ldots$ of matroids in $\mathcal M$
has a pair $i<j$ such that $M_i$ is isomorphic to a restriction of $M_j$.

\begin{THMWQO}
  Let $\F$ be a finite field.
  Every class of $\F$-representable matroids of bounded contraction depth
  is well-quasi-ordered by restriction.
\end{THMWQO}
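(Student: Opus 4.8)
The plan is to induct on the bound $d$ on the contraction depth; write $\mathcal{M}_d$ for the class of $\F$-representable matroids of contraction depth at most $d$. For $d=0$ the class $\mathcal{M}_0$ consists of the empty matroid alone, so the statement is trivial; assume $d\ge 1$ and that $\mathcal{M}_{d-1}$ is well-quasi-ordered by restriction. Since a matroid is the direct sum of its connected components, a restriction respects direct sums, and a connected restriction of a direct sum is a restriction of one of the summands, Higman's lemma applied to the multiset of connected components reduces the task to showing that the \emph{connected} members of $\mathcal{M}_d$ are well-quasi-ordered by restriction.

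To a connected matroid $M\in\mathcal{M}_d$ I attach a labelled rooted tree. Choose an element $e$ with $\cd(M/e)\le d-1$; then $M/e$ is the direct sum of connected matroids $N_1,\dots,N_k$, each lying in $\mathcal{M}_{d-1}$, each $M|(E(N_i)\cup\{e\})$ is connected, and $M$ is the iterated parallel connection of the matroids $M|(E(N_i)\cup\{e\})$ along their common element $e$. Iterating this construction --- with $e$ as the root, then choosing a root inside each $N_i$, and so on --- produces a rooted tree $T_M$ on $E(M)$ of height at most $d$ in which, for every element $f$, the elements contracted before $f$ is reached are exactly the ancestors of $f$. Label each $f$, say at depth $m+1$ with ancestors $a_1,\dots,a_m$, by the pair consisting of the matroid $M|\{a_1,\dots,a_m,f\}$ and the list of depths of $a_1,\dots,a_m$. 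Since this matroid has at most $d$ elements and $\F$ is finite, only finitely many labels occur; and, unwinding the parallel-connection description recursively, $M$ is determined up to isomorphism by the labelled tree $(T_M,\ell_M)$.

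Now let $M_1,M_2,\dots$ be an infinite sequence of connected matroids in $\mathcal{M}_d$, and apply the version of Kruskal's tree theorem for rooted trees with labels from a fixed finite set to the sequence $(T_{M_i},\ell_{M_i})$: there are indices $i<j$ and an embedding $\varphi$ of $(T_{M_i},\ell_{M_i})$ into $(T_{M_j},\ell_{M_j})$. Let $X$ be the set of elements of $M_j$ outside the image of $\varphi$. One checks that deleting the nodes of $X$ from $T_{M_j}$ and promoting the children of each deleted node yields a contraction-decomposition forest of $M_j\setminus X$; after deleting the removed elements from the labels this labelled forest is isomorphic to $(T_{M_i},\ell_{M_i})$, so by the reconstruction above $M_j\setminus X\cong M_i$, and thus $M_i$ is isomorphic to a restriction of $M_j$.

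The technical heart --- and the step I expect to be the main obstacle --- is making the last paragraph rigorous, that is, reconciling Kruskal's notion of tree embedding with the restriction order on matroids. One ingredient is that restriction does not increase contraction depth, so that a contraction-decomposition tree survives the deletion of elements; this follows from the definition, or from Theorem~\ref{thm:main-long-circuit}. The more delicate point is that deleting an internal element must interact correctly both with the parallel-connection reconstruction and with the local labels, so the labelling, the quasi-order placed on the labels, and the embedding relation used on the trees (perhaps in a gap-condition refinement) have to be arranged so that a Kruskal embedding of labelled trees genuinely transports the linear-algebraic structure of $M_i$ onto a restriction of $M_j$.
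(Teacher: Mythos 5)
Your skeleton (induction on the depth bound, Higman's lemma to reduce to connected matroids, then peeling off a root element $e$ with $\cd(M/e)\le d-1$) matches the paper's strategy, but the decisive step fails: the labelled tree you construct does \emph{not} determine $M$ up to isomorphism, so the reconstruction claim at the end of your second paragraph is false and the Kruskal step cannot be repaired with these labels. The parallel-connection decomposition is only available at a node where the contraction by a \emph{single} element disconnects; one level down you are gluing the components of $M/\{e,e_1\}$ along the rank-$2$ set $\{e,e_1\}$, and such amalgams are not unique, while your ancestor-path restrictions never see the relative attachment of different branches to the span of the common ancestors. Concretely, over $\F=\mathrm{GF}(3)$ let $e,a,b,c$ be the standard basis of $\F^4$, let $b'=b+e+a$, and let $c'=c+e+a$ (matroid $M$) or $c'=c+e-a$ (matroid $M'$). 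Both matroids are connected and of contraction depth at most $4$; contracting $e$ and then $a$ is a legitimate choice in your construction and yields the same tree for both (root $e$, child $a$, children $b,c$ of $a$, leaves $b',c'$) with identical labels: the four-element ancestor paths $\{e,a,b,b'\}$ and $\{e,a,c,c'\}$ induce $U_{3,4}$ in both matroids, and the shorter paths are free. Yet $\{b,b',c,c'\}$ is a circuit of $M$ and independent in $M'$, so $M\not\cong M'$. Hence the labelled tree cannot even distinguish these matroids, let alone transport a tree embedding to a matroid restriction; the "technical heart" you flag is not a technicality but exactly this missing information.

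The paper's proof keeps precisely the data your labels discard, by working with representations rather than isomorphism classes: it proves the stronger Proposition~\ref{prop:wqomatrix} about $Q$-labelled full-rank matrices over $\F$. After the Higman reduction to connected matrices, the chosen element $e_i$ is put into the form of a unit column, contracting it amounts to deleting the top row, and the deleted top-row entry of every remaining column is pushed into the label, enlarging the label set from $Q$ to the wqo $Q\times\F$; then induction on contraction depth plus Higman's lemma finishes the argument, with no tree theorem, no parallel-connection reconstruction, and no gap condition. Note also that your trees have height at most $d$, so even your plan would never need Kruskal's theorem: iterated applications of Higman's lemma over a well-quasi-ordered (possibly infinite) label set suffice. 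To repair your argument you would have to replace the label of each element by its actual column over $\F$ expressed relative to its contracted ancestors (up to the appropriate change-of-basis equivalence), at which point your construction becomes essentially the paper's labelled-matrix induction.
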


The paper is organized as follows.
Section~\ref{sec:def} introduces the branch-depth of a connectivity function.
Then we discuss various connectivity functions arising from graphs and matroids.
Sections~\ref{sec:treedepth} and \ref{sec:shrubdepth} are for graphs;
Section~\ref{sec:treedepth} introduces the branch-depth of a graph
by using the usual connectivity function of graphs defining branch-width.
Section~\ref{sec:shrubdepth} introduces the rank-depth of a simple graph
by using the cut-rank function.
In Section~\ref{sec:matroid} we introduce the branch-depth of a matroid
as the branch-depth of the matroid connectivity function.
We also investigate properties of the branch-depth, 
the contraction depth, the deletion depth, and the contraction-deletion depth
and obtain an inequality between the rank-depth and the tree-depth of simple graphs.
Section~\ref{sec:wqo} proves that matroids representable over a fixed finite field having
bounded contraction depth are well-quasi-ordered by restriction.

\section{Branch-Depth of a connectivity function}
\label{sec:def}
\subsection{Connectivity functions}\label{subsec:conn}
For a finite set $E$, we let $2^E$ denote the set of all subsets of $E$.  
A \emph{connectivity function} $\lambda$ on $E$
is a function $\lambda:2^E\to \mathbb{Z}$ satisfying the following three conditions.
\begin{enumerate}[(i)]
\item $\lambda(\emptyset)=0$
\item (\emph{symmetric}) $\lambda(X)=\lambda(E\setminus X)$ for all $X\subseteq E$.
\item (\emph{submodular}) $\lambda(X)+\lambda(Y)\ge \lambda(X\cup Y)+\lambda(X\cap Y)$ for all $X,Y\subseteq E$.
\end{enumerate}

\begin{LEM}\label{lem:disjointunion}
  Let $\lambda$ be a connectivity function on $E$ and 
  let $K$ be a subset of~$E$ such that $\lambda(K)=0$.
  Then the function $\lambda|_K$ on $2^K$ defined by $\lambda|_K(X)=\lambda(X)$
  is a connectivity function on $K$
  and 
  \[\lambda(X)=\lambda|_K(X\cap K)+\lambda|_{E\setminus K}(X\setminus K)\]
  for all $X\subseteq E$.
\end{LEM}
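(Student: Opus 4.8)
The plan is to verify the three axioms of a connectivity function for $\lambda|_K$ and then establish the additive decomposition formula, from which the first claim also follows by a short computation.

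First I would check the axioms for $\lambda|_K$. Since $\emptyset \subseteq K$, we have $\lambda|_K(\emptyset) = \lambda(\emptyset) = 0$. For submodularity, if $X, Y \subseteq K$ then $X \cup Y$ and $X \cap Y$ are also subsets of $K$, so the inequality $\lambda|_K(X) + \lambda|_K(Y) \ge \lambda|_K(X \cup Y) + \lambda|_K(X \cap Y)$ is just a restricted instance of submodularity of $\lambda$. The only axiom requiring the hypothesis $\lambda(K) = 0$ is symmetry: for $X \subseteq K$ we must show $\lambda(X) = \lambda(K \setminus X)$. This I would obtain from the decomposition formula below (applied with $E \setminus K$ in place of $K$, noting $\lambda(E \setminus K) = \lambda(K) = 0$ by symmetry of $\lambda$), or more directly: apply submodularity to $X$ and $E \setminus K$, which are disjoint, giving $\lambda(X) + \lambda(E \setminus K) \ge \lambda(X \cup (E \setminus K)) + \lambda(\emptyset)$, i.e. $\lambda(X) \ge \lambda(X \cup (E \setminus K))$; but $X \cup (E \setminus K) = E \setminus (K \setminus X)$, so by symmetry of $\lambda$ this reads $\lambda(X) \ge \lambda(K \setminus X)$. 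Applying the same argument to $K \setminus X$ in place of $X$ gives the reverse inequality, hence equality. This shows $\lambda|_K$ is symmetric on $2^K$, and the symmetric function $\lambda|_{E \setminus K}$ is handled identically (or by noting $E \setminus K$ also has $\lambda$-value zero).

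For the decomposition formula, fix $X \subseteq E$ and write $X = (X \cap K) \cup (X \setminus K)$ as a disjoint union. The idea is to sandwich $\lambda(X)$ between $\lambda(X \cap K) + \lambda(X \setminus K)$ from both sides. For one direction, submodularity applied to the disjoint sets $X \cap K$ and $X \setminus K$ gives $\lambda(X \cap K) + \lambda(X \setminus K) \ge \lambda(X) + \lambda(\emptyset) = \lambda(X)$. For the reverse direction, I would apply submodularity to $X$ and $K$: since $X \cap K$ and $X \cup K$ are the meet and join, $\lambda(X) + \lambda(K) \ge \lambda(X \cup K) + \lambda(X \cap K)$, and using $\lambda(K) = 0$ this gives $\lambda(X) \ge \lambda(X \cup K) + \lambda(X \cap K) - $ wait, this needs $\lambda(X \cup K) \ge \lambda(X \setminus K)$. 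That last inequality follows by applying submodularity to $X \cup K$ and $E \setminus K$ (disjoint), or equivalently by the symmetry-type argument above: $E \setminus (X \cup K) \subseteq E \setminus K$ has $\lambda$-value zero complement, so $\lambda(X \cup K) = \lambda(E \setminus (X \cup K)) \le \lambda((E \setminus (X \cup K)) \cup K) = \lambda(E \setminus (X \setminus K)) = \lambda(X \setminus K)$ by the monotonicity-toward-a-zero-set fact established in the previous paragraph. Chaining these, $\lambda(X) \ge \lambda(X \cap K) + \lambda(X \setminus K)$, and combined with the first direction we get equality.

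The only mildly delicate point is the repeated use of the auxiliary fact that, whenever $\lambda(K) = 0$, adding or removing elements of $E \setminus K$ does not increase $\lambda$ on sets already containing or disjoint from the appropriate part — this is really the single lemma $\lambda(A) \ge \lambda(A \setminus K) $-type statement, all instances of which come from one application of submodularity against $E \setminus K$ or $K$ together with $\lambda(\emptyset) = 0$ and symmetry. I expect no genuine obstacle here; the main care is bookkeeping the complements correctly so that each submodularity application is against a set of $\lambda$-value zero. Once the auxiliary fact is isolated and proved first, both the symmetry of $\lambda|_K$ and the two inequalities giving the decomposition formula fall out immediately.
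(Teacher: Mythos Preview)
Your approach is essentially the same as the paper's: establish symmetry of $\lambda|_K$ via one submodularity inequality against a set of $\lambda$-value zero, then sandwich $\lambda(X)$ between $\lambda(X\cap K)+\lambda(X\setminus K)$ using submodularity on the disjoint pair for one direction and submodularity against $K$ for the other, reducing the latter to $\lambda(X\cup K)\ge\lambda(X\setminus K)$. The paper finishes that last step by first proving symmetry of $\lambda|_{E\setminus K}$ and then observing that $E\setminus(X\cup K)$ and $X\setminus K$ are complementary inside $E\setminus K$, giving equality; your direct submodularity on $X\cup K$ and $E\setminus K$ accomplishes the same.

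Two small slips to fix. First, $X\cup K$ and $E\setminus K$ are not disjoint (their intersection is $X\setminus K$), but submodularity still applies and gives exactly $\lambda(X\cup K)+\lambda(E\setminus K)\ge\lambda(E)+\lambda(X\setminus K)$, which is what you need. Second, in your alternative chain the inequality goes the wrong way: the ``monotonicity-toward-a-zero-set'' fact you proved says $\lambda(A)\ge\lambda(A\cup K)$ for $A\subseteq E\setminus K$, not $\le$, so the chain should read $\lambda(X\cup K)=\lambda(E\setminus(X\cup K))\ge\lambda((E\setminus(X\cup K))\cup K)=\lambda(X\setminus K)$, which is the direction you actually want. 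With these corrections the argument is complete.
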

\begin{proof}
  Clearly $\lambda|_K$ is submodular and $\lambda|_K(\emptyset)=0$.
  By symmetry and submodularity, for all $X\subseteq K$, 
  \[\lambda(X)=\lambda(K)+\lambda(E\setminus X)\ge \lambda(K\setminus X)+\lambda(E)=\lambda(K\setminus X)\]
  and therefore $\lambda(X)\ge \lambda(K\setminus X)$. 
  By replacing $X$ with $K\setminus X$, we deduce that $\lambda(X)=\lambda(K\setminus X)$.
  Thus $\lambda|_K$ is a connectivity function on $E$.

  Let $Y$ be a subset of $E$. Then by submodularity,
  \[\lambda(Y\cap K)+\lambda(Y\setminus K)\ge \lambda(Y)+\lambda(\emptyset)=\lambda(Y).\]
  Again by submodularity, $\lambda(Y)=\lambda(Y)+\lambda(K)\ge \lambda(Y\cap K)+\lambda(Y\cup K)=\lambda(Y\cap K)+\lambda(E\setminus (Y\cup K))$.
  Since $\lambda(E\setminus K)=0$, $\lambda|_{E\setminus K}$ is a connectivity function
  and therefore  $\lambda(E\setminus (Y\cup K))=\lambda(Y\setminus K)$.
  Thus $ \lambda(Y)\ge \lambda(Y\cap K)+\lambda(Y\setminus K)$
  and so we deduce that 
  $\lambda(Y)=\lambda(Y\cap K)+\lambda(Y\setminus K)=\lambda|_{K}(Y\cap K)+\lambda|_{E\setminus K } (Y\setminus K)$.
\end{proof}

We say that a connectivity function $\lambda$ on $E$ is the
\emph{disjoint union} of two connectivity functions $\lambda_1$,
$\lambda_2$ on $E_1$, $E_2$ respectively, if $E=E_1\cup E_2$, $E_1\cap
E_2=\emptyset$, 
and $\lambda(X)=\lambda_1(X\cap E_1)+\lambda_2(X\cap E_2)$.

For a partition $\mathcal P$ of $E$ into subsets,
we define 
\[
  \lambda(\mathcal P):=\max_{\mathcal P'\subseteq \mathcal P} \left(
    \bigcup_{X\in P'} X
    \right).
\]
This function was introduced by Geelen, Gerards, and Whittle~\cite{GGW2006}. We will use it to measure the `connectivity' of a partition.
\subsection{Branch-depth}
A \emph{radius} of a tree is the minimum $r$ such that there is a node having distance at most $r$ from every node.

A \emph{decomposition} of a connectivity function $\lambda$ on $E$ is 
a pair $(T,\sigma)$ of a tree $T$ with at least one internal node
and a bijection $\sigma$ from $E$ to the set of leaves of $T$.
The \emph{radius} of a decomposition $(T,\sigma)$ 
is defined to be the radius of the tree $T$. 
For an internal node $v \in V(T)$, the components of the graph $T - v$ give rise to a partition $\mathcal P_v$ of $E$ by $\sigma$. 
The \emph{width} of $v$ is defined to be $\lambda(\mathcal P_v)$.
The \emph{width} of the decomposition $(T,\sigma)$ is the maximum width of an internal node of $T$.  We say that 
a decomposition $(T,\sigma)$ is a $(k,r)$-\emph{\decomposition} of $\lambda$ if the width is at most $k$ and the radius is at most~$r$.
The \emph{branch-depth} of $\lambda$ is the minimum $k$ such that there exists a $(k,k)$-\decomposition{} of $\lambda$.

If $\abs{E}<2$, then there exists no decomposition and we define $\lambda$ to have branch-depth $0$.
Note that every tree in a decomposition has radius at least~$1$ and therefore
the branch-depth of $\lambda$ is at least $1$ if $\abs{E}>1$.

\begin{LEM}\label{lem:connsub}
  Let $\lambda$ be a connectivity function on $E$ and 
  let $K$ be a subset of $E$ such that $\lambda(K)=0$.
  Let $k$, $r_1$, $r_2$ be integers such that 
   $\lambda|_{K}$ has a $(k,r_1)$-decomposition
  and  $\lambda|_{E\setminus K}$ has a $(k,r_2)$-decomposition.
  \begin{enumerate}[(i)]
  \item   If $r_1\neq r_2$, then 
  $\lambda$ has a $(k,\max(r_1,r_2))$-decomposition. 
\item   If $r_1=r_2$, then 
  $\lambda$ has a $(k,r_1+1)$-decomposition. 
  \end{enumerate}
\end{LEM}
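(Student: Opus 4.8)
The plan is to take the given $(k,r_1)$-decomposition $(T_1,\sigma_1)$ of $\lambda|_K$ and the given $(k,r_2)$-decomposition $(T_2,\sigma_2)$ of $\lambda|_{E\setminus K}$ and glue them into a single decomposition of $\lambda$. Since each $T_i$ has radius at most $r_i$, fix a center $c_i \in V(T_i)$ with $\dist_{T_i}(c_i,x) \le r_i$ for every node $x$. The key point that makes the gluing work for the width is Lemma~\ref{lem:disjointunion}: because $\lambda(K)=0$, we have $\lambda(X) = \lambda|_K(X\cap K) + \lambda|_{E\setminus K}(X\setminus K)$ for all $X\subseteq E$, so $\lambda$ is the disjoint union of $\lambda|_K$ and $\lambda|_{E\setminus K}$. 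This additivity is what will let us bound the width of every internal node of the merged tree by $k$.

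For part (ii), when $r_1 = r_2 = r$, form $T$ by adding a new node $c$ adjacent to both $c_1$ and $c_2$, keeping $\sigma = \sigma_1 \cup \sigma_2$ (the leaf sets are disjoint since $K$ and $E\setminus K$ partition $E$). Then $\dist_T(c,x) \le r+1$ for every node $x$, so the radius is at most $r+1$. For the width: an internal node $v$ of $T$ is either the new node $c$, a node of $T_1$, or a node of $T_2$. If $v = c$, then $T-c$ has exactly two components, inducing the partition $\{K, E\setminus K\}$ of $E$; any subfamily has union $\emptyset$, $K$, $E\setminus K$, or $E$, all of which have $\lambda$-value $0 \le k$. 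If $v \in V(T_1)$ is internal in $T_1$, then the components of $T-v$ are the components of $T_1 - v$ except that the one containing $c_1$ is enlarged by the entire tree $T_2$; so $\mathcal P_v$ refines $\mathcal P_v^{(1)}$ (the partition from $T_1-v$) only in that one extra block acquires all of $E\setminus K$. For any subfamily $\mathcal P'$ of $\mathcal P_v$, write its union as $A \cup B$ where $A\subseteq K$ is a union of blocks of $\mathcal P_v^{(1)}$ and $B$ is either $\emptyset$ or $E\setminus K$. By Lemma~\ref{lem:disjointunion}, $\lambda(A\cup B) = \lambda|_K(A) + \lambda|_{E\setminus K}(B)$, and $\lambda|_{E\setminus K}(B) = 0$ in both cases ($B=\emptyset$ or $B = E\setminus K$), while $\lambda|_K(A)$ is at most the width of $v$ in $(T_1,\sigma_1)$, hence at most $k$. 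The case $v\in V(T_2)$ is symmetric. So $(T,\sigma)$ is a $(k,r_1+1)$-decomposition of $\lambda$.

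For part (i), assume without loss of generality $r_1 < r_2$. Here we should attach $T_1$ directly to a node of $T_2$ rather than through a new hub, to avoid paying an extra $+1$. The natural choice is to identify $c_1$ with $c_2$: form $T$ from the disjoint union $T_1 \sqcup T_2$ by identifying the two center nodes into a single node $c$. (If $c_1$ or $c_2$ happens to be a leaf, subdivide an incident edge first or simply add a pendant internal structure so that $c$ is genuinely internal; one must also check $T$ has an internal node, which it does.) Every node of $T_1$ is within distance $r_1$ of $c$ and every node of $T_2$ within distance $r_2$ of $c$, so the radius of $T$ is at most $\max(r_1,r_2) = r_2$. For the width of an internal node $v$: if $v = c$, the components of $T - c$ are the components of $T_1 - c_1$ together with the components of $T_2 - c_2$, and any subfamily $\mathcal P'$ of $\mathcal P_v$ splits as a subfamily $\mathcal P'_1$ coming from $T_1-c_1$ and a subfamily $\mathcal P'_2$ from $T_2 - c_2$; by Lemma~\ref{lem:disjointunion} the $\lambda$-value of the union is the sum of $\lambda|_K$ of the first part and $\lambda|_{E\setminus K}$ of the second, each bounded by the width of $c_i$ in its decomposition, i.e.\ by $k$ — but this naive bound gives only $2k$. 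To get $k$ we instead argue as in part (ii): note that $\lambda|_K\bigl(\bigcup \mathcal P'_1\bigr) = \lambda|_K\bigl(K \setminus \bigcup\mathcal P'_1\bigr)$ by symmetry, so replacing $\mathcal P'_1$ by its complement within $\mathcal P_{c_1}^{(1)}$ does not change the value; hence we may assume $\bigcup\mathcal P'_1$ is whichever of the two (it and its complement) we like, and in particular we can absorb it so that together with $\mathcal P'_2$ the total union is one of $\bigcup\mathcal P'_2$ or $(E\setminus K)\cup\bigl(K\setminus\bigcup\mathcal P'_1^{\,c}\bigr)$ — this still does not obviously drop to $k$.

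The cleaner fix, and the one I expect to be the actual argument, is: do \emph{not} identify the centers but instead add the edge $c_1 c_2$ and then contract it — equivalently, when merging at $c = c_2$, attach the whole tree $T_1$ as a new subtree pendant at $c_2$ by an edge from $c_1$ to $c_2$, then observe $r_1 + 1 \le r_2$ since $r_1 < r_2$, so the radius is still at most $r_2 = \max(r_1,r_2)$; and the width analysis is now \emph{exactly} that of part (ii) (internal nodes of $T_1$ see an extra block which is all of $E\setminus K$, contributing $0$; internal nodes of $T_2$ see an extra block which is all of $K$, contributing $0$; the node $c_2$ sees its old $T_2$-blocks plus one new block equal to $K$, and by Lemma~\ref{lem:disjointunion} adding or not adding that block to $\mathcal P'$ changes the value by $\lambda|_K(K) = 0$). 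Thus $(T,\sigma)$ is a $(k,r_2)$-decomposition of $\lambda$. The main obstacle, as the discussion above shows, is purely bookkeeping: one must route the attachment so that the radius does not increase past $\max(r_1,r_2)$ while keeping each new block on the "cheap" side $K$ or $E\setminus K$ so that Lemma~\ref{lem:disjointunion} forces its marginal contribution to the width to be zero; choosing to hang the \emph{smaller}-radius tree $T_1$ off the \emph{larger}-radius tree $T_2$ (rather than symmetrically through a new hub) is exactly what buys the improvement from $r_1+1$ in case (ii) to $\max(r_1,r_2)$ in case (i).
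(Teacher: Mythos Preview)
Your final constructions are correct and match the paper's approach: connect the two decomposition trees at their centers and use Lemma~\ref{lem:disjointunion} to show that every internal node still has width at most $k$, since the ``foreign'' block contributes $\lambda|_K(K)=0$ or $\lambda|_{E\setminus K}(E\setminus K)=0$. The paper does this with a single construction---add an edge between internal centers $v_1$ and $v_2$---which handles both cases at once (radius $\max(r_1,r_2)$ when $r_1\neq r_2$, radius $r_1+1$ when $r_1=r_2$), and simply asserts the width bound without spelling out the case analysis you give.

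Two minor cleanups. First, the detour through identifying the centers is a dead end (as you note, it threatens a $2k$ bound at the merged node) and should be dropped; your eventual ``hang $T_1$ off $c_2$ by an edge'' is exactly the paper's construction, and your hub-node variant in (ii) is an unnecessary but harmless alternative. Second, you should choose the centers $c_i$ to be \emph{internal} nodes of $T_i$, as the paper does: otherwise attaching an edge at a leaf $c_i$ makes it internal in $T$ and breaks the bijection $\sigma$. This is always possible, since if a leaf realizes the radius then so does its unique neighbor.
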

\begin{proof}
  We may assume that $r_1\ge r_2$.
  Let $(T_1,\sigma_1)$ be a $(k,r_1)$-decomposition of $\lambda|_{K}$
  and let $v_1$ be an internal node of $T_1$ such that each node of $T_1$
  is within distance $r_1$ from $v_1$.
  Let $(T_2,\sigma_2)$ be a $(k,r_2)$-decomposition of $\lambda_{E\setminus K}$
  and let $v_2$ be an internal node of $T_2$ such that each node of $T_2$
  is within distance $r_2$ from~$v_2$.
  
  Let $T$ be the tree obtained from the disjoint union of $T_1$ and $T_2$ by
  adding an edge between $v_1$ and $v_2$.
  Then $T$ with $\sigma_1$ and  $\sigma_2$ forms a decomposition  $(T,\sigma)$
  of $\lambda$.
    
  Then $\lambda$ is the disjoint union of $\lambda_1$ and $\lambda_2$ and by Lemma~\ref{lem:disjointunion}, $(T,\sigma)$ has width at most $k$.
  There are two cases for the radius of $T$.
  If $r_2<r_1$, then the radius of $T$ is at most $r_1$
  and therefore $(T,\sigma)$ is a $(k,r_1)$-decomposition.
  If $r_2=r_1$, then the radius of $T$ is at most $r_1+1$
  and so $(T,\sigma)$ is a $(k,r_1+1)$-decomposition.
\end{proof}
\begin{LEM}\label{lem:conn}
  Let $\lambda$ be a connectivity function on $E$.
  Let $E_1, E_2,\ldots,E_m$ be  a partition of $E$ into non-empty sets
  such that $\lambda(E_i)=0$ for all $1\le i\le m$.
  Let $\lambda_i:=\lambda_{|_{E_i}}$ and 
  $k_i$ be the branch-depth of $\lambda_i$.
  Let $k=\max(k_1,k_2,\ldots,k_m)$.

  Then the branch-depth of $\lambda$ is $k$ or $k+1$.
  In particular, if the branch-depth of $\lambda$ is $k+1$,
  then there exist $i<j$ such that $k_i=k_j=k$
  and $\lambda$ has a $(k,k+1)$-decomposition.
\end{LEM}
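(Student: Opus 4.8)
The plan is to prove the two bounds $\max_i k_i\le\bd(\lambda)\le\max_i k_i+1$ and then the ``in particular'' clause by induction on $m$. Throughout I would use two facts that follow immediately from the axioms: $\lambda\ge 0$ everywhere (since $2\lambda(X)=\lambda(X)+\lambda(E\setminus X)\ge\lambda(E)+\lambda(\emptyset)=0$), hence by submodularity every union of the zero-sets $E_i$ is again a zero-set, so Lemma~\ref{lem:disjointunion} applies to it; and the fact that in a \decomposition{} of a connectivity function on a set of size at least $2$ the underlying tree has at least three vertices and no two adjacent leaves (otherwise it would be $K_2$, which has no internal node). If $m=1$ then $\lambda=\lambda_1$ and there is nothing to prove, so I assume $m\ge 2$, whence $\bd(\lambda)\ge 1$.

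For the lower bound I would show $\bd(\lambda)\ge k_i$ for each $i$; this is trivial if $|E_i|\le 1$, so assume $|E_i|\ge 2$. Starting from a $(b,b)$-\decomposition{} $(T,\sigma)$ of $\lambda$ with $b=\bd(\lambda)$, let $T'$ be the minimal subtree of $T$ containing $\sigma(E_i)$. Its leaves are exactly $\sigma(E_i)$, and since any two of them are non-adjacent in $T$, $T'$ has at least three vertices and hence an internal node, so $(T',\sigma|_{E_i})$ is a \decomposition{} of $\lambda_i$. Its width is at most $b$: each part at a node $w$ of $T'$ has the form $Y\cap E_i$ where $Y$ is a union of parts of the partition induced at $w$ in $(T,\sigma)$, and $\lambda_i(Y\cap E_i)=\lambda(Y\cap E_i)\le\lambda(Y)\le b$ by Lemma~\ref{lem:disjointunion} with $K=E\setminus E_i$. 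Its radius is at most $b$: if $c$ is a center of $T$ and $c'$ is the vertex of $T'$ nearest to $c$ in $T$, then every $T$-path from $c$ into $T'$ passes through $c'$, so $\dist_{T'}(c',v)\le\dist_T(c,v)\le b$ for all $v\in V(T')$. Hence $k_i=\bd(\lambda_i)\le b$.

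For the upper bound, together with the promised $(k,k+1)$-\decomposition, I would build a ``star of centers'': pick a $(k_i,k_i)$-\decomposition{} of $\lambda_i$ with a center $c_i$ when $|E_i|\ge 2$, let $T_i$ be the one-vertex tree $c_i$ when $|E_i|=1$, and join every $c_i$ to a new node $u$. Then $u$ has degree $m\ge 2$ and is internal, the leaves biject with $E$, and every vertex is within distance $1+\max_i k_i=k+1$ of $u$. The width is at most $k$ by a short case check on an internal node $w$: at $u$ the parts are the $E_i$ and every union of them is a zero-set, while if $w$ lies inside some $T_i$ then every union of parts of the partition at $w$ is either contained in $E_i$ (so bounded by the width of $(T_i,\sigma_i)$, which is at most $k_i\le k$) or of the form $(E\setminus E_i)\cup Y$ with $Y\subseteq E_i$, in which case Lemma~\ref{lem:disjointunion} with $K=E\setminus E_i$ gives $\lambda((E\setminus E_i)\cup Y)=\lambda_i(Y)\le k_i\le k$. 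This shows $\bd(\lambda)\le k+1$, so $\bd(\lambda)\in\{k,k+1\}$, and if $\bd(\lambda)=k+1$ the star of centers is itself a $(k,k+1)$-\decomposition.

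For the last clause I would prove the contrapositive: if exactly one index $i_0$ attains $k$ (so all others have $k_i\le k-1$, and in particular $k\ge 1$), then $\bd(\lambda)=k$. This goes by induction on $m$. If $m=2$, with $E_j$ the other part, I combine a $(k,k)$-\decomposition{} of $\lambda_{i_0}$ with a $(k_j,k_j)$-\decomposition{} of $\lambda_j$, viewed as a $(k,k-1)$-\decomposition, via Lemma~\ref{lem:connsub}(i) (the radii $k$ and $k-1$ are distinct) to obtain a $(k,k)$-\decomposition{} of $\lambda$; if instead $E_j=\{e\}$ is a single element, I hang a new leaf $e$ off the center of the $(k,k)$-\decomposition{} of $\lambda_{i_0}$, checking the width with Lemma~\ref{lem:disjointunion} for $K=\{e\}$. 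If $m\ge 3$, I pick any $j\ne i_0$ and merge $E_{i_0}$ and $E_j$ into one part $F$: by Lemma~\ref{lem:disjointunion} the function $\lambda|_F$ is the disjoint union of $\lambda_{i_0}$ and $\lambda_j$, so by the case $m=2$ its branch-depth is $k$; then the new partition into $F$ and the remaining $E_i$'s has $m-1$ parts with a unique part of maximum branch-depth $k$, and the induction hypothesis gives $\bd(\lambda)=k$. The main obstacle is the small-part bookkeeping — size-one parts admit no \decomposition{} and the subtree spanned by a size-two part can collapse to a single edge — and the one genuinely non-mechanical point is this last step: joining two radius-$k$ trees by an edge only gives radius $k+1$, so to land a $(k,k)$-\decomposition{} one must first arrange that everything outside the unique heaviest part has radius at most $k-1$, which is what the merging-plus-induction accomplishes, before invoking Lemma~\ref{lem:connsub}(i).
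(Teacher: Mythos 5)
Your proof is correct, and it is built from the same ingredients as the paper's proof: the lower bound via restricting a \decomposition{} to the minimal subtree spanning $\sigma(E_i)$, joining \decomposition{}s at their centers, hanging a leaf on a center to absorb a singleton part, and an induction on the number of parts. The difference is organizational. The paper runs a single induction on $m$, absorbs singleton parts first, and then dismisses the rest with ``use Lemma~\ref{lem:connsub} repeatedly,'' leaving to the reader both the radius bookkeeping (the order of the repeated applications matters to stay within radius $k+1$, respectively $k$ in the unique-maximizer case) and the extraction of the $(k,k+1)$-\decomposition. You instead give a one-shot star-of-centers construction with a new hub vertex, which simultaneously certifies $\bd(\lambda)\le k+1$ and produces the required $(k,k+1)$-\decomposition{} without any ordering argument, and you prove the clause about two indices attaining $k$ by a separate contrapositive induction that merges the unique heaviest part with one other part, invoking Lemma~\ref{lem:connsub}(i) and the leaf-attachment trick only in the base case $m=2$. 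This is essentially the paper's argument made explicit, and it is arguably cleaner. Two details to state explicitly (both glossed at the same level in the paper): the centers $c_i$ you attach to the hub must be internal nodes of $T_i$, so that the leaves of the star still biject with $E$ (possible, since a leaf witnessing the radius can be replaced by its unique neighbour when the tree has an internal node); and in the width check at a node $w$ inside $T_i$, the set $Y$ is not merely a subset of $E_i$ but a union of parts of the partition induced at $w$ by $T_i$, which is what justifies the bound $\lambda_i(Y)\le k_i$.
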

\begin{proof}
  We proceed by induction on $m$.   We may assume that $m\ge 2$.
  Clearly
  the branch-depth of $\lambda$ is at least the branch-depth of $\lambda_i$,
  simply by taking  a subtree
  and therefore the branch-depth of $\lambda$ is at least $k$.

  If $k_i=0$ for all $i$, then $\abs{E_i}=1$ for all $i$.
  Then  the branch-depth of $\lambda$ is~$1$
  because there is a $(0,1)$-decomposition $(T,\sigma)$
  where $T$ is $K_{1,m}$.
  
  Thus we may assume that $k=k_1>0$.
  Let $(T_1,\sigma_1)$ be a $(k,k)$-\decomposition{} of $\lambda|_{E_1}$.
  Let $r_1$ be an internal node of $T_1$ such that each node of $T_1$ is within distance $k$ from $r_1$. 

  If $\abs{E_2}=1$, then
  we attach to $r_1$ a leaf node corresponding to the element of $E_2$, 
  producing a $(k,k)$-decomposition of $\lambda_{|_{E_1\cup E_2}}$.
  By applying the induction hypothesis to $E_1\cup E_2, E_3,\ldots,E_m$,
  we deduce the conclusion.
  Thus we may assume that $\abs{E_2},\abs{E_3},\ldots,\abs{E_m}\ge 2$
  and therefore $k_2,k_3,\ldots,k_m\ge 1$.
  Now it is trivial to deduce the conclusion by using Lemma~\ref{lem:connsub} repeatedly.
\end{proof}
\section{Branch-depth and tree-depth of graphs}\label{sec:treedepth}
Recall that for a graph $G=(V,E)$ and a set $X\subseteq E$,  $\lambda_G(X)$ is the number of vertices incident with both an edge in $X$ and an edge in $E\setminus X$.
Let us define the \emph{branch-depth} of a graph $G$ to be the branch-depth of $\lambda_G$. Let us write $\bd(G)$ to denote the branch-depth of $G$.
In this section we aim to prove that tree-depth and branch-depth are tied for graphs.

First let us review the definition of tree-depth \cite[Chapter 6]{NO2012}.
A \emph{rooted forest} is a forest in which every component has  a specified node called a \emph{root} and each edge is oriented away from a root. The \emph{closure} of a rooted forest $T$ is the undirected simple graph on $V(T)$ in which two vertices are adjacent if and only if there is a directed path from one to the other in $T$.
The \emph{height} of a rooted forest is the number of vertices
in  a longest directed path.
The \emph{tree-depth} of a simple graph  $G$, denoted by $\td(G)$, is
the minimum height of a rooted forest whose closure contains $G$ 
as a subgraph. The \emph{tree-depth} of a graph is defined as the tree-depth of its simplification.

Unlike tree-depth, the branch-depth of a graph may be different from the branch-depth
of its simplification. For instance,
a one-vertex graph has branch-depth $1$ if it has at least two loops,
and branch-depth $0$ otherwise.

Now let us see why a class of graphs of bounded tree-depth has bounded branch-depth. The following lemma proves one direction.

\begin{LEM}\label{lem:td2bd}
  The branch-depth of a connected graph is less than or equal to its tree-depth.
\end{LEM}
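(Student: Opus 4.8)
The plan is to build a $(t,t)$-decomposition of $\lambda_G$ directly from an optimal rooted tree for the tree-depth, where $t=\td(G)$. If $\abs{E(G)}<2$ then $\bd(G)=0\le t$, so assume $\abs{E(G)}\ge 2$. Since $G$ is connected, $\td(G)$ is witnessed by a single rooted \emph{tree} $T$ (rather than a forest) with $V(T)=V(G)$, of height at most $t$, whose closure contains $G$; in particular every edge of $G$ joins a node of $T$ to one of its ancestors.

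Next I would construct the decomposition tree. For each edge $e$ of $G$, let $d(e)$ be the endpoint of $e$ that is a descendant of the other in $T$ (for a loop, its unique endpoint), and form $T'$ from $T$ by attaching, for each $e\in E(G)$, a new leaf $\ell_e$ adjacent to $d(e)$; set $\sigma(e)=\ell_e$. Because $G$ is connected with at least two vertices, a node of $T$ with no children picks up a pendant $\ell_e$ for each of its (necessarily upward) incident edges, so the only node of $T$ that can fail to be internal in $T'$ is the root; if the root has degree one in $T'$, I would repeatedly delete it and re-root at its child. Call the resulting tree $\hat T$. Since $\abs{E(G)}\ge 2$ and no $\ell_e$ is ever deleted, $\hat T$ still has an internal node, its leaves are exactly the $\ell_e$, and $\sigma$ is a bijection from $E(G)$ onto them; so $(\hat T,\sigma)$ is a \decomposition{} of $\lambda_G$. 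Taking the root of $\hat T$ as a centre, one checks that the radius of $(\hat T,\sigma)$ is at most $t$: the relevant depths in $T$ are at most $t-1$, and each $\ell_e$ lies one step further down.

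The crux is the width. I would prove that for every internal node $v$ of $\hat T$ and every set $A\subseteq E(G)$ which is a union of parts of $\mathcal P_v$, every vertex of $G$ incident both to an edge of $A$ and to an edge of $E(G)\setminus A$ lies on the path from the root to $v$ in $T$. The point is that the part of $\mathcal P_v$ containing an edge $e$ is determined entirely by the location of $d(e)$ relative to $v$ in $T$ (in the subtree below a fixed child of $v$; equal to $v$; or outside the subtree of $T$ rooted at $v$). Hence if a vertex $w$ were a proper descendant of $v$, or incomparable with $v$ in $T$, then $d(e)$ would fall in the same one of these regions for every edge $e$ incident to $w$, so all such edges would lie in a single part of $\mathcal P_v$, and $w$ could not be incident to edges of both $A$ and its complement. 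Since the root-to-$v$ path in $T$ has at most $t$ vertices, this gives $\lambda_G(A)\le t$; thus every internal node of $\hat T$ has width at most $t$, $(\hat T,\sigma)$ is a $(t,t)$-decomposition of $\lambda_G$, and $\bd(G)\le t$.

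The main obstacle is this last claim, in particular the case analysis on where $d(e)$ sits for the edges $e$ incident to a hypothetical ``split'' vertex $w$; the re-rooting step requires only a routine check that $\hat T$ remains a legitimate decomposition tree (that it retains an internal node and that its leaves are exactly the $\ell_e$).
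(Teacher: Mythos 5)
Your proposal is correct and follows essentially the same route as the paper: build the decomposition tree from a rooted tree witnessing the tree-depth by hanging one leaf per edge at its lower endpoint, and bound the width by observing that any vertex incident with edges from two different parts at an internal node $v$ must lie on the root-to-$v$ path. Your extra care about degenerate roots (re-rooting so every non-leaf is genuinely internal) is a detail the paper's proof passes over silently, but it does not change the argument.
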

\begin{proof}
  Let $G$ be a graph. If $G$ has at most one edge, then $G$ has branch-depth $0$ and therefore we may assume that $G$ has at least two edges.
  
  Let $k$ be the tree-depth of $G$.
  Let $F$ be a rooted forest of height $k$ whose closure contains
  the simplification of $G$ as a subgraph.
  We may assume that $V(F)=V(G)$.
  Thus $F$ is connected.

  Let $T$ be the tree obtained from $F$ 
  by attaching 
  one leaf  to a node~$v$ 
  for every edge $e=uv$ of $G$ when $v$ is under $u$ in $F$.
  (If $e$ is a loop incident with $v$, then we attach a leaf to $v$.)
  Let $\sigma$  be the bijection from $E(G)$ to the leaves of $T$ given by the construction of $T$.
  
  We claim that $(T,\sigma)$ is a $(k,k)$-decomposition.
  Let $v$ be an internal node of $T$, located at the distance $i$ from the root.
  Then $i\le k-1$ because the height of $F$ is $k$.
  Let $\mathcal P$ be the partition of $E(G)$ given by the node $v$.
  Then if a vertex of $G$ meets more than one part of $\mathcal P$, then 
  it is in the path from the root of $T$ to $v$ and therefore 
  the width of $v$ is at most $i+1\le k$.
  Observe that the radius of $T$ is at most $k$.
  This proves our claim.
\end{proof}

Now let us prove the backward direction. We will need the following lemma. 

\begin{LEM}\label{lem:treedepthlem}
  Let $k$ be an integer.
Let $G = (V,E)$ be a graph and let $\mathcal{P}$ be a partition of $E$.
If $\lambda_G(\mathcal P)\le k$, then 
$G$ has at most $\max(2k-1,0)$ vertices incident with  edges from at least two parts of $\mathcal{P}$.
\end{LEM}

\begin{proof}
  Choose a subset $\mathcal{P'}$ of $\mathcal{P}$ 
  by selecting each part of $\mathcal{P}$ independently at random
  with probability $1/2$.
  Let $A = \bigcup_{F\in \mathcal{P'}} F$ and let $Y$ denote the set of vertices 
  incident with both an edge in $A$ and an edge in $E\setminus A$.

  Let $X$ be the set of vertices meeting edges in at least two parts of $\mathcal P$.
  We may assume that $\abs{X}>0$.
  Every vertex in $X$
  will appear in $Y$ with probability at least $1/2$ since this vertex is incident with edges from at least two parts of $\mathcal{P}$.  
  It then follows from linearity of expectation that 
  $\mathbb{E}[ \abs{Y}] \ge \frac{1}{2} \abs{X}$.  In particular,
  because $P(\abs{Y}<\frac{1}{2}\abs{X})\ge P(\mathcal P'=\emptyset)>0$, 
  there exists a subset $\mathcal{P'}$ of $\mathcal P$ 
  for which $\abs{Y}> \frac{1}{2} \abs{X}$.
  By our assumption, we must have $\abs{Y} \le k$ and this gives the desired bound.
\end{proof}

\begin{LEM}\label{lem:bd2td}
  Let $r$, $w$ be positive integers.
  If a graph has a $(w,r)$-\decomposition, then 
  it has tree-depth at most $(2w-1)r+1$.
\end{LEM}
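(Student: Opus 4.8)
The plan is to take a $(w,r)$-\decomposition{} $(T,\sigma)$ of $\lambda_G$, pick a center node $c$ of $T$ witnessing the radius bound, and use the radial structure of $T$ to build a rooted forest of bounded height whose closure contains (the simplification of) $G$. The rooted forest will be constructed directly on $V(G)$: we will linearly order a carefully chosen small set of ``boundary'' vertices at each level of the decomposition, chaining these orders together from the center outward, and then hang the remaining vertices as leaves. The height bound should come out as roughly (number of levels)~$\times$~(number of boundary vertices per level)~$+~1$, and Lemma~\ref{lem:treedepthlem} is exactly what controls the latter quantity.

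First I would root $T$ at the center $c$, so every leaf is at depth at most $r$. For an internal node $v$ at depth $i$ (with $0 \le i \le r-1$), the partition $\mathcal{P}_v$ of $E(G)$ induced by the components of $T-v$ has width at most $w$, so by Lemma~\ref{lem:treedepthlem} at most $2w-1$ vertices of $G$ are incident with edges from two distinct parts of $\mathcal{P}_v$; call these the \emph{branch vertices at $v$}. Walking from $c$ down to any leaf, an edge $e=xy$ of $G$ with $\sigma(e)$ under a child $u$ of $v$: if both endpoints $x,y$ lie ``below'' the subtree rooted at $u$ in the sense that every edge of $G$ at $x$ (and at $y$) maps into that subtree, then $e$ is handled deeper; otherwise at least one of $x,y$ is a branch vertex at $v$ or at an ancestor of $v$. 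This is the key structural observation: every edge of $G$ has at least one endpoint that appears as a branch vertex at some node on the path from $c$ to $\sigma(e)$, except possibly at the very last step near the leaf, which I would handle by a direct argument (the leaf's element $e$ has both endpoints among branch vertices on the root-to-leaf path, or $e$ is incident with few vertices total).

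Next I would assemble the rooted forest. Order the at most $2w-1$ branch vertices at $c$ as a directed path $P_0$. For each child subtree, recursively order its branch vertices as directed paths hanging below the ``lowest'' appropriate vertex of the parent's chain, so that any two branch vertices appearing on a common root-to-leaf path of $T$ become comparable (ancestor/descendant) in the forest. A vertex of $G$ that is a branch vertex at several nodes gets placed once, at its shallowest occurrence; one must check that this does not create incomparabilities — this is where submodularity/consistency of the decomposition is used, essentially that the branch vertices along a path are ``nested.'' Finally, attach every remaining vertex of $G$ (one incident only with edges in a single leaf-direction) as a child leaf of the deepest branch vertex above it. Each root-to-leaf path in $T$ contributes at most $r$ levels, each contributing at most $2w-1$ vertices to the chain, plus one final leaf vertex, giving height at most $(2w-1)r + 1$. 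Then for any edge $xy$ of $G$, both $x$ and $y$ lie on a single root-to-leaf chain of the forest and are therefore comparable, so the closure contains $xy$; doing this component-by-component and taking the disjoint union of the resulting rooted trees yields the desired rooted forest.

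The main obstacle I expect is the bookkeeping in the last paragraph: showing that the per-level branch-vertex sets can be consistently chained so that \emph{every} pair of branch vertices co-occurring on some root-to-leaf path of $T$ ends up comparable in the forest, while still placing each vertex of $G$ only once and keeping every chain of length at most $(2w-1)r$. The cleanest route is probably an induction on $r$ (or on $\abs{V(T)}$): remove the center, apply the bound to each child subtree $T_u$ — whose relevant connectivity function is a restriction with a $(w,r-1)$-decomposition — to get rooted forests of height at most $(2w-1)(r-1)+1$, then graft them below the length-$(\le 2w-1)$ chain of branch vertices at $c$, re-rooting and identifying repeated vertices with their shallower copy. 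Verifying that this identification never destroys comparability, and that edges incident with $c$'s branch vertices are still covered after grafting, is the delicate point; Lemma~\ref{lem:treedepthlem} and Lemma~\ref{lem:disjointunion} (to split $\lambda_G$ along the center) are the tools that make it go through.
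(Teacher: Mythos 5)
Your overall strategy --- root $T$ at a center, use Lemma~\ref{lem:treedepthlem} to get at most $2w-1$ ``branch vertices'' per internal node, and induct on $r$ --- is the same as the paper's, but the step where the per-level information is combined into one rooted forest is exactly where your argument stops short, and that gap is real. Both of your routes hinge on unverified claims: in the direct construction you appeal to the branch vertices along a root-to-leaf path being ``nested,'' which is not true in general (a vertex can meet two parts of $\mathcal P_v$ at a deep node $v$ while all of its edges lie in a single part at the center, and vice versa); and in the inductive version you recurse on whole child subgraphs that still contain the center's branch vertices, so the recursively built forests share vertices and you must ``identify repeated vertices with their shallower copy'' --- the very point you concede is delicate and do not carry out. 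Neither submodularity nor Lemma~\ref{lem:disjointunion} supplies this.

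The missing idea is to delete the branch vertices before recursing. Let $v$ be the center, $\mathcal P$ the partition of $E(G)$ from $T-v$, and $X$ the set of vertices meeting at least two parts, so $\abs{X}\le 2w-1$. Every vertex of $G\setminus X$ meets only one part, so each connected component $C$ of $G\setminus X$ has all its edges in a single part of $\mathcal P$; hence the minimal subtree of $T$ containing the leaves of $C$ avoids $v$, has radius at most $r-1$, and (when $C$ has at least three edges, the smaller components being trivial) yields a $(w,r-1)$-decomposition of $C$. By induction $\td(C)\le (2w-1)(r-1)+1$, and since the components of $G\setminus X$ are pairwise vertex-disjoint and disjoint from $X$, the standard bound $\td(G)\le \abs{X}+\td(G\setminus X)$ (put $X$ as a path above the forest for $G\setminus X$) gives $(2w-1)r+1$ with no identification of duplicated vertices at all. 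Your sketch also omits the base case $r=1$ (at most $2w-1$ vertices of degree at least two, so tree-depth at most $2w$) and the check that a child piece has enough edges and an internal node before one may recurse; these are minor but should be stated.
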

\begin{proof}
  We proceed by the induction on $r$.
  Let $G$ be a graph having a $(w,r)$-\decomposition{} $(T,\sigma)$.
  Let $v$ be an internal node of $T$ such that 
  each node of $T$ is within distance $r$ from~$v$.

  If $r=1$, then by Lemma~\ref{lem:treedepthlem},
  the number of vertices of degree at least two in $G$ is at most $2w-1$
  and thus the tree-depth is at most $2w$.
  Now we assume that $r>1$.

  Let $\mathcal P$ be a partition of $E(G)$ induced by $T-v$.
  Let $X$ be the set of vertices of $G$ 
  meeting at least two parts of $\mathcal P$.
  By Lemma~\ref{lem:treedepthlem}, $\abs{X}\le 2w-1$.

  It is enough to prove that $G\setminus X$ has tree-depth at most $(2w-1)(r-1)+1$.
  Let $C$ be a component of $G\setminus X$.
  If $C$ has at most two edges,  then the tree-depth of $C$ is at most $2$.
  As $r>1$, $(2w-1)(r-1)+1\ge 2$.
  If $C$ has at least three edges, then let $T'$ be the minimal subtree of $T$ containing the leaves of $T$ associated with edges in $C$. Since $T'$ does not contain $v$,  the radius of $T'$ is at most $r-1$. Furthermore, as $C$ has at least three edges, $T'$ has at least one internal node. Thus, $T'$ induces a $(w,r-1)$-decomposition of $C$. By induction hypothesis, $C$ has tree-depth at most $(2w-1)(r-1)+1$.
\end{proof}
Now we are ready to prove Theorem~\ref{thm:main-td}.
\begin{THM}\label{thm:main-td}
  Let $G$ be a graph,
  $k$ be its branch-depth,
  and  $t$ be its tree-depth. Then
  \[
  k-1\le t\le \max(2k^2-k+1,2).
  \]
\end{THM}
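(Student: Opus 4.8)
The plan is to derive the two inequalities $k-1 \le t$ and $t \le \max(2k^2-k+1, 2)$ separately, assembling them from the lemmas already established in this section.

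For the lower bound $k - 1 \le t$, I would first reduce to the connected case. If $G$ is connected, Lemma~\ref{lem:td2bd} gives $\bd(G) \le \td(G)$, i.e.\ $k \le t$, which is even stronger than $k-1 \le t$. If $G$ is disconnected, let $E_1, \dots, E_m$ be the edge sets of the components (together with isolated-vertex contributions handled trivially). Each $\lambda_G$ restricted to $E_i$ is the connectivity function of a connected graph $G_i$, and $\lambda_G$ is the disjoint union of these. By Lemma~\ref{lem:conn}, $\bd(\lambda_G) \le \max_i \bd(\lambda_{G_i}) + 1 \le \max_i \td(G_i) + 1 = \td(G) + 1$, since the tree-depth of a graph is the maximum of the tree-depths of its components. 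Rearranging gives $k - 1 \le t$, as desired. One must be slightly careful about components that are single edges or isolated vertices, but these contribute branch-depth $0$ and tree-depth $\le 1$, so they do not affect the bound.

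For the upper bound, the idea is to feed a good \decomposition{} of $G$ into Lemma~\ref{lem:bd2td}. By definition of branch-depth, $G$ has a $(k,k)$-\decomposition. Applying Lemma~\ref{lem:bd2td} with $w = r = k$ yields $\td(G) \le (2k-1)k + 1 = 2k^2 - k + 1$, provided $k \ge 1$. The edge case is $k = 0$: then $\abs{E(G)} < 2$, so $G$ has at most one edge, hence tree-depth at most $2$ (a single edge has tree-depth $2$, or less). This is exactly why the bound is stated as $\max(2k^2 - k + 1, 2)$: the quadratic expression equals $1$ when $k=0$, which would be too small, so the $\max$ with $2$ absorbs that corner case. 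For $k \ge 1$ we have $2k^2 - k + 1 \ge 2$, so the $\max$ is just $2k^2 - k + 1$ and Lemma~\ref{lem:bd2td} applies directly.

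I expect no serious obstacle here, since both directions are essentially immediate consequences of the preceding lemmas; the only real care needed is bookkeeping around the degenerate cases (graphs with fewer than two edges, and disconnected graphs) to make sure the stated inequalities hold on the nose rather than off by one. The cleanest writeup is: first dispatch the upper bound in one line from Lemma~\ref{lem:bd2td} plus the $k=0$ observation, then dispatch the lower bound using Lemma~\ref{lem:td2bd} for connected graphs and Lemma~\ref{lem:conn} to lift it to all graphs.
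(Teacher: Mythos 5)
Your proposal is correct and follows essentially the same route as the paper: the lower bound comes from combining Lemma~\ref{lem:conn} (reducing to a connected component) with Lemma~\ref{lem:td2bd}, and the upper bound comes from applying Lemma~\ref{lem:bd2td} to a $(k,k)$-decomposition, with the $k=0$ case handled separately exactly as the paper does.
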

\begin{proof}[Proof of Theorem~\ref{thm:main-td}]
  By Lemma~\ref{lem:conn},
  there is a component $C$ of $G$ whose branch-depth is $k$ or $k-1$.
  By Lemma~\ref{lem:td2bd},
  that component has tree-depth at least $k-1$
  and so $\td(G)\ge k-1$.

  If $k=0$, then it has at most $1$ edge
  and so its tree-depth is at most $2$.
  If $k>0$, then by Lemma~\ref{lem:bd2td},
  we have $\td(G)\le (2k-1)k+1=2k^2-k+1$.
\end{proof}

We say that a graph $H$ is a \emph{minor} of a graph $G$ if
$H$ can be obtained from $G$ by contracting edges and deleting edges and vertices.
As many graph parameters do not increase under taking minors, we may wonder whether the branch-depth behaves similarly. It is easy to prove the following proposition. 
\begin{PROP}
  If $H$ is a minor of a graph $G$, then $\bd(H)\le \bd(G)$.
\end{PROP}

Ding~\cite{Ding1992} proved that graphs of bounded tree-depth are well-quasi-ordered under the induced subgraph relation. This implies that for each $k$, there is a finite list of graphs such that a graph $G$ has branch-depth at most $k$ if and only if no graph in the list is isomorphic to an induced subgraph of $G$.
Since no graph of large tree-width has small branch-depth, 
one can decide in linear time whether the input graph has an induced subgraph isomorphic to a fixed graph, for instance by using Courcelle's theorem~\cite{Courcelle1990} with the algorithm to find a tree-decomposition by Bodlaender~\cite{Bodlaender1996}.
The authors are not aware of such an algorithm that in addition finds a decomposition for branch-depth of graphs.

\section{Rank-depth and shrub-depth of simple graphs}\label{sec:shrubdepth}
The \emph{cut-rank function} of a simple graph  $G = (V,E)$ is defined as a function $\rho_G$ on the subsets of $V$ such that 
$\rho_G(X)$ is the rank of an $X\times (V\setminus X)$ $0$-$1$ matrix 
\[A_X=(a_{ij})_{i\in X, j\in V\setminus X}\]
over the binary field where $a_{ij}=1$ if and only if $i$ and $j$ are adjacent and $0$ otherwise.
The cut-rank function is an instance of a connectivity function on the vertex set of a simple graph,
see a paper of Oum and Seymour~\cite{OS2004}.
We define the \emph{rank-depth} of a simple graph $G$, denoted by $\rd(G)$,
to be the branch-depth of~$\rho_G$.

As an example, we will prove that
\[ \rd(P_n)= \Theta(\log n/\log\log n),\]
where $P_n$ is the path graph on $n$ vertices.
Together with Theorem~\ref{thm:main-sd} to be proved later, this will give an alternative proof of the fact that 
a class of graphs containing arbitrary long induced paths has unbounded shrub-depth shown by 
Ganian, Hlin{\v{e}}n{\'y}, Ne{\v{s}}et{\v{r}}il, Obdr\v{z}\'{a}lek, and Ossona de Mendez~\cite{GHNOO2017}.
We write $\log$ to denote the natural logarithm $\log_{e}$ if the base is omitted.
\begin{PROP}\label{prop:path}
  For $n\ge 2$,
  \[ \rd(P_n)>\frac{\log n }{\log (1+4\log  n)}.\]
\end{PROP}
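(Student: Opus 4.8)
The plan is to show that a $(k,k)$-decomposition of $\rho_{P_n}$ cannot exist unless $k$ is roughly $\log n / \log\log n$, by bounding how many vertices of $P_n$ can be "captured deep" in the tree. The key structural fact is that the cut-rank $\rho_{P_n}(X)$ is small exactly when $X$ is close to an interval of the path: if $X\subseteq V(P_n)$ induces few "transitions" (maximal runs of consecutive vertices alternately in and out of $X$), then $\rho_{P_n}(X)$ is small, and conversely $\rho_{P_n}(X)$ is at least something like the number of transitions minus a constant. So a decomposition of width $k$ forces every displayed set $\bigcup_{X\in\mathcal P'}X$ to be a union of at most $O(k)$ intervals of the path.

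First I would set up the decomposition $(T,\sigma)$ of radius at most $k$ and width at most $k$, with central node $v$. Walking outward from $v$, at each internal node the partition $\mathcal P_w$ splits the current block of leaves; since the width bound controls $\rho$ of every \emph{union} of parts, each part of $\mathcal P_w$ — indeed each union of parts — corresponds to a vertex subset that is a union of at most $2k+O(1)$ intervals of $P_n$. The crucial counting step: if at a node the current set of captured path-vertices is a union of $t$ intervals, then after branching into the children, each child's set is a union of at most $t + (2k+O(1))$ intervals (the new cut can only introduce $O(k)$ new interval endpoints). Hence after descending $i$ levels from $v$, any leaf-block corresponds to at most $O(ki)$ intervals, so at depth $k$ the block at a leaf is a union of at most $O(k^2)$ intervals. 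But a leaf corresponds to a single vertex of $P_n$, which is fine; the real constraint comes from comparing the total: the root block is all of $V(P_n)$, $n$ vertices forming one interval, and it must be partitioned by a bounded-degree branching of depth $\le k$ into singletons, where at each of the $\le k$ steps the "interval complexity" grows only additively by $O(k)$. Tracking the number of pieces across the $\le k$ levels, the branching factor needed to reach $n$ singletons from a structure of interval-complexity $O(k^2)$ in $k$ steps gives $n \le (1 + O(k^2))^{k}$ roughly, i.e. $\log n \le k\log(1+O(k\log n))$, which after solving yields $k > \log n / \log(1 + 4\log n)$.

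I would carry this out in three steps: (1) prove the lemma that $\rho_{P_n}(X)$ is within an additive constant of the number of endpoints of the maximal subintervals of $P_n$ contained in $X$ (elementary linear algebra over $\mathbb F_2$ — the relevant submatrix is block-structured, and an interval contributes rank at most $2$); (2) prove the propagation lemma bounding the growth of interval-complexity by $O(k)$ per level of the decomposition tree, using that every union of parts at a node has cut-rank $\le k$; (3) combine with a counting argument that a depth-$\le k$ tree whose leaf-blocks each have interval-complexity $\le ck^2$ and whose node-degrees are unbounded but whose \emph{total} interval budget grows additively can separate at most $(1+4\log n)^{k}$ vertices, contradicting $n$ if $k \le \log n/\log(1+4\log n)$.

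The main obstacle will be step (2)–(3): getting the constants right so that the bound comes out as exactly $\log n/\log(1+4\log n)$ rather than some weaker $\log n/\operatorname{poly}(k)$ expression. The subtlety is that the branching at an internal node can be into many children, so I must charge the interval-complexity carefully — the sum over children of (new endpoints introduced) is controlled by the width $k$ at that one node, not by $k$ times the number of children — and then bound the product of child-counts over a root-to-leaf path of length $\le k$ by $\prod(\text{pieces at level }i) \le \prod(1+4i\log n) \le (1+4\log n)^k$, which is where the logarithm in the denominator of the stated bound originates. Handling the edge cases ($n$ small, the radius being achieved at a node versus the need for $T$ to have an internal node) is routine and I would dispatch them at the end.
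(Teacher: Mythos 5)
Your plan runs on the same engine as the paper's proof: the width bound at an internal node forces the number of edges of $P_n$ whose two ends lie in different parts of that node's partition to be at most $4k$, shown by picking a side for each part so that at least a quarter of these ``colorful'' edges are oriented consistently and then exhibiting a triangular submatrix of the adjacency matrix of rank equal to their number. The paper, however, avoids all global bookkeeping after this point: since at the central node there are at most $4k$ colorful edges, some monochromatic subpath has at least $n/(4k+1)$ vertices, lies entirely in one branch of $T-v$, and inherits a $(k,r-1)$-decomposition, so induction on $n$ gives $n\le (4k+1)^r$ directly; the stated inequality then follows by taking $k=\lfloor \log n/\log(1+4\log n)\rfloor$ and using $k<\log n$.

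The genuine gap is in your step (3). The displayed counting ``$\prod(1+4i\log n)\le(1+4\log n)^k$'' is false as written (already for $i=2$), and $\log n$ should not appear in the per-level accounting at all --- it may only enter at the very end through $k<\log n$. More substantively, the interval budget does not grow additively across a level: each internal node can add up to $4k$ new interval endpoints, and the number of nonempty blocks at level $i$ is itself bounded only by the total interval count at that level, so the correct recursion is multiplicative, $\mathrm{total}(i+1)\le(1+4k)\,\mathrm{total}(i)$, giving $n\le(4k+1)^r\le(4k+1)^k$; a product of child-counts along a single root-to-leaf path does not bound the number of leaves of $T$. Finally, your step (1) in the form ``$\rho_{P_n}(X)$ is at least the number of transitions minus a constant'' loses exactly the slack you cannot afford: it yields only $m\le 4k+O(1)$ cut edges per node, hence a denominator of the form $\log(4\log n+O(1))$, which does not prove the stated bound with the constant $1+4\log n$; you need the sharp bound $\rho\ge\lceil m/4\rceil$ from the triangular submatrix of consistently oriented cut edges. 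With these two repairs (the sharp rank bound and the multiplicative level recursion, or simply the paper's induction on $n$ via the long monochromatic subpath), your approach goes through and is essentially the paper's argument reorganized as a global count.
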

\begin{proof}
  Let $v_1,v_2,\ldots, v_n$ be the vertices of $P_n$ in the order.
  Let $\lambda_n$ be the cut-rank function of the path $P_n$.

  We claim that for a positive integer $k$,
  if $\lambda_n$ has a $(k,r)$-\decomposition{} $(T,\sigma)$,
  then \[r\ge \lceil \log_{4k+1} n\rceil.\]
  We proceed by induction on $n$.
  If $2\le n\le 4k+1$, then trivially the radius of $T$ is at least $1$.
  So we may assume that $n> 4k+1$.

  We may assume that the radius of $T$ is $r$.
  Let $v$ be an internal node of $T$ such that every node of $T$ is within distance $r$ from $v$.
  Let $e_1,e_2,\ldots,e_\ell$ be the edges incident with $v$ in $T$.
  We color a vertex $w$ of $P_n$ by $i\in\{1,2,\ldots,\ell\}$
  if the unique path from $v$ to $\sigma(w)$ on $T$ contains $e_i$.

  We say an edge of $P_n$ is \emph{colorful} if its ends have distinct colors.
  Let~$m$ be the number  of colorful edges of $P_n$.
  Then there exists a subset  $X$ of  $\{1,2,\ldots,\ell\}$
  such that at least $m/2$ colorful edges have exactly one end whose color
  is in $X$.
  We may assume that there are at least $m/4$ edges $v_iv_{i+1}$ 
  such that the color of $v_i$ is in $X$
  and the color of $v_{i+1}$ is not in $X$,
  because otherwise we may relabel vertices in the reverse order.

  Let $1\le i_1<i_2<\cdots<i_{\lceil m/4\rceil}< n$ be a  sequence of integers
  such that for each $j\in\{1,2,\ldots,\lceil m/4\rceil\}$, 
  the color of $v_{i_j}$ is in $X$
  and the color of $v_{i_j+1}$ is not in $X$.
  Then the submatrix of the adjacency matrix of $G$ consisting of rows from $v_{i_1}$, $v_{i_2}$, $\ldots$, $v_{i_{\lceil m/4\rceil}}$
  and columns from $v_{i_1+1}$, $v_{i_2+1}$, $\ldots$, $v_{i_{\lceil m/4\rceil}+1}$
  has rank exactly $\lceil m/4\rceil$
  because it is a triangular matrix with non-zero diagonal entries.
  This implies that the width of $(T,\sigma)$ is at least~$\lceil m/4\rceil$.
  So, $k\ge m/4$
  and therefore the number of colorful edges of $P_n$
  is at most $4k$.

  So there is a subpath $P_{n'}$ of $P_n$ having no colorful edges
  where $n'\ge \lceil\frac{n}{4k+1}\rceil>1$.
  Then $P_{n'}$ has a $(k,r-1)$-decomposition induced from $(T,\sigma)$.
  By the induction hypothesis,
  $r-1\ge \lceil \log_{4k+1}  n'\rceil\ge \log_{4k+1} n  - 1$.
  Thus $r\ge \log_{4k+1}n$. This proves the claim.

  From this claim we deduce that for a positive integer $k$, if \[k\log (4k+1)<\log n,\] then the rank-depth of $P_n$ is larger than $k$.

  Now suppose that $k= \lfloor \frac{\log n }{\log (1+4\log n)}\rfloor$.
  As $n\ge 2$, we have $2\log n\ge 1$ and therefore $\log (1+4\log n)> 1$. Thus $k< \log n$. 
  So we deduce that 
  \[
    k\log(4k+1)
    < \frac{\log n }{\log (1+4\log n)}
    \log (4\log n +1 )
    =\log n.
  \]
  Thus the rank-depth of $P_n$ is larger than $ {\log n }/{\log (1+4\log n)}$.
\end{proof}
\begin{PROP} For $n\ge 2$, 
\[ \rd(P_n)\le \left\lceil (1+o(1))\frac{\log n}{\log\log n}\right\rceil.\]
\end{PROP}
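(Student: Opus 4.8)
The plan is to construct an explicit $(k,k)$-decomposition of $\rho_{P_n}$ with $k = \lceil (1+o(1)) \frac{\log n}{\log\log n}\rceil$, essentially dual to the lower-bound argument in Proposition~\ref{prop:path}. The key observation is that if we cut a path $P_n$ into $s$ consecutive subpaths $Q_1, \dots, Q_s$ (reading along the path), then for any union $A$ of some of these subpaths' edge sets, the cut-rank $\rho_{P_n}(V(A))$ is at most the number of ``boundaries'' between consecutive blocks where one side is in $A$ and the other is not; in particular it is at most $s-1$, since each such boundary contributes a single vertex where the adjacency submatrix picks up at most one new rank. More precisely, I would record the following: if $X \subseteq V(P_n)$ is a union of at most $t$ intervals of consecutive vertices, then $\rho_{P_n}(X) \le 2t$ (each interval endpoint contributes at most one to the rank of the adjacency submatrix between $X$ and its complement).

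First I would set up the recursion. Choose a branching factor $b = b(n)$ (to be optimized, $b \approx \log n$ will do). Split $P_n$ at $b-1$ internal cut points into $b$ consecutive subpaths, each of roughly $n/b$ vertices, and let the root of $T$ have these $b$ subtrees as children, recursing on each subpath. Actually, to handle the cut-vertices cleanly it is simpler to split the \emph{edge set} into $b$ consecutive blocks and assign each block recursively; a partition $\mathcal{P}_v$ at an internal node $v$ at depth $d$ then corresponds to grouping the edges of $P_n$ into intervals, and any subfamily $\mathcal{P}' \subseteq \mathcal{P}_v$ gives a set $A = \bigcup_{X \in \mathcal{P}'} X$ whose vertex-boundary with its complement consists of at most $O(b \cdot d)$ vertices — roughly $b$ new boundary vertices can be created at each of the $d$ levels above $v$, bounding $\rho_{P_n}(A)$ by $O(bd)$. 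The depth of this tree is $r = \lceil \log_b n \rceil$, so both the width and the radius are controlled: width $\le O(b \log_b n)$ and radius $= \lceil \log_b n\rceil$.

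Next I would optimize $b$. Choosing $b = \log n$ gives radius $\lceil \log n / \log\log n \rceil$ and width $O(\log n \cdot \log n/\log\log n)$, which is too large — the width dominates. So instead I should make the width, not just the radius, equal to $k$. The right move is to note that the number of boundary vertices in $A$ is bounded not by $bd$ but by something tighter: an interval-union of edges arising from $\mathcal{P}'$ at a node of depth $d$ in a $b$-ary tree is a union of at most $(b-1)^{d}+1$... no — better, I would argue directly that the number of maximal runs in $\mathcal{P}'$ is at most the number of leaves below $v$ that lie at the boundary, and bound the total contribution across the whole decomposition by choosing $b$ so that $b \cdot \lceil\log_b n\rceil = (1+o(1))\frac{\log n}{\log\log n}$; taking $b$ to be a slowly growing function such as $b = \exp(\sqrt{\log n})$... hmm, this needs care. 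The cleanest route: set $k$ to be the target value, let $b$ be such that a $b$-ary balanced tree on $n$ leaves has depth $\le k$ (so $b \ge n^{1/k}$), and verify the width bound is also $\le k$; with $k = (1+o(1))\frac{\log n}{\log\log n}$ we get $b = n^{1/k} = \log^{1+o(1)} n$, and the width is at most (number of runs of a depth-$\le k$ subfamily) $\le$ roughly $2k$ or so after merging consecutive blocks — since at each internal node the children are consecutive intervals of edges, any $\mathcal{P}'$ restricted to the subtree is again a union of few intervals, and a careful accounting shows the boundary count telescopes to $O(k)$ rather than $O(bk)$.

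The main obstacle I expect is exactly this last point: getting the width down to $O(k)$ rather than the naive $O(bk)$. The naive bound ``each of the $b$ children can start a new run, times $k$ levels'' is too weak by a factor of $b$. The fix is to observe that because the children of every node are \emph{consecutive} intervals of the linearly ordered edge set of $P_n$, the set $A = \bigcup_{X \in \mathcal{P}'} X$ is a union of at most $O(k)$ intervals of $E(P_n)$ total — one does not accumulate $b$ new intervals per level, because consecutive chosen children merge into one interval. Making this precise (e.g., by a clean induction on the tree showing that the number of maximal intervals of $A$ contributed within any subtree of depth $d$ is at most $d+1$, or a similar bound) is the crux; once it is in hand, $\rho_{P_n}(A) \le 2 \cdot O(k) \le k$ for the appropriately chosen constant-adjusted $k = \lceil (1+o(1))\frac{\log n}{\log\log n}\rceil$, and combined with radius $\le k$ this yields the desired $(k,k)$-decomposition and hence the bound on $\rd(P_n)$. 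I would finish by choosing $b = \lceil \log^2 n\rceil$ or similar and doing the routine estimate $\log_b n = \frac{\log n}{2\log\log n}(1+o(1))$ to confirm the stated asymptotics.
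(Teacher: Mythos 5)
Your overall plan (recursively cutting $P_n$ into consecutive blocks and bounding cut-rank by counting boundary vertices of intervals) is the paper's plan, but the step you yourself flag as the crux is not just unproven — it is false, and the parameters you end up choosing do not give the stated bound. The width at an internal node $v$ is the maximum of $\rho_{P_n}\bigl(\bigcup_{X\in\mathcal P'}X\bigr)$ over subfamilies $\mathcal P'$ of the parts of $T-v$, and these parts are exactly the children subtrees of $v$ together with one parent-side part (a union of at most two intervals); there is no accumulation over the levels above $v$, so your $O(bd)$ count is an over-count of the wrong quantity, and the ``telescoping'' you propose to fix it with is wrong in the other direction. Concretely, if $v$ has $b$ children corresponding to consecutive blocks $B_1,\dots,B_b$ each with at least two vertices, take $\mathcal P'$ to consist of every other child: $A=B_1\cup B_3\cup\cdots$ is a union of about $b/2$ pairwise non-adjacent intervals, the $b-1$ edges between consecutive blocks each have exactly one end in $A$, and these ends are all distinct, so the adjacency matrix between $A$ and its complement contains a permutation submatrix of order $b-1$ and $\rho_{P_n}(A)\ge b-1$. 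Merging of consecutive chosen children never helps because the adversarial subfamily simply never chooses consecutive children; in particular your claimed induction (``at most $d+1$ maximal intervals within a subtree of depth $d$'') already fails at depth one. So the width is tied to the branching factor, not to the radius, and with your final choice $b=\lceil\log^2 n\rceil$ (or $b\approx\log n$) the root alone forces width $\Theta(\log^2 n)$ (resp.\ $\Theta(\log n)$). Since branch-depth needs a single $k$ bounding both width and radius, this only yields $\rd(P_n)=O(\log^2 n)$, not $\lceil(1+o(1))\log n/\log\log n\rceil$.

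The correct balance, which is what the paper does, is to cap the branching factor by roughly $k$ itself: with at most $w+1$ blocks at the root and at most $w-1$ at every other internal node, the path is partitioned at each node into at most $w+1$ subpaths, so at most $2w$ vertices have a neighbour outside their own part and every union of parts has cut-rank at most $w$ (the rank is at most half the number of such boundary vertices); the price is that radius $w$ now requires $(w+1)(w-1)^{w-1}\ge n$, i.e.\ $w\log w\gtrsim\log n$, which is precisely where $w=(1+\varepsilon)\log n/\log\log n$ comes from. Your ``cleanest route'' choice $b=\lceil n^{1/k}\rceil$ with $k=\lceil(1+\varepsilon)\log n/\log\log n\rceil$ would in fact also work, but for the opposite reason to the one you give: there $b\approx(\log n)^{1/(1+\varepsilon)}=o(k)$, so the trivial per-node bound (width at most about $b+1\le k$) suffices and no telescoping argument is needed at all. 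One further slip to correct: the decomposition is of the cut-rank function on $V(P_n)$, so the leaves of $T$ must be in bijection with the vertices, not with the edge set as in your ``split the edge set'' variant.
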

\begin{proof}
  For an integer $w$, consider any partition $\mathcal P$ of $V(P_n)$ into
  at most $w+1$ subpaths.
  We claim that for all $\mathcal P'\subseteq\mathcal P$,
  $\rho_G\left(\bigcup_{X\in\mathcal P'} X\right)\le w$.
  Let $U$ be the set of vertices $v$
  having a neighbor not in $X$ for $X\in \mathcal P$ with $v\in X$.
  Notice that each subpath $Q$ has at most two vertices having neighbors outside $Q$.
  In particular, if $Q$ contains the first or the last vertex of $P_n$, then it has
  at most one vertex having neighbors outside $Q$.
  This means that $\abs{U}\le 2(w-1)+2=2w$.
  Then $\rho_G\left(\bigcup_{X\in\mathcal P'} X\right) =
  \rho_{G[U]}\left(\bigcup_{X\in\mathcal P'}( X\cap U)\right) \le \abs{U}/2\le w$.

  We aim to construct a decomposition $(T,\sigma)$ of $P_n$
  such that each node has degree at most $w+1$
  and the radius is as small as possible
  so that at each internal node, the path is partitioned into at most $w+1$ subpaths.
  This can be done as follows:
  For the root node, we split the path into $w+1$ subpaths.
  For the non-root internal node, we split the path
  into at most~$w-1$ subpaths. Outside is split into at most two subpaths, one left and one right and therefore the path is partitioned into at most~$w+1$ subpaths at this internal node, as desired.
  It is clear that if
  \begin{equation}\label{eq:path}
    (w+1)(w-1)^{w-1}\ge n, 
  \end{equation}
  then the rank-depth of $P_n$ is at most $w$.

  Let $\ell=(1+\varepsilon)\frac{\log n}{\log\log n}$ for $\varepsilon>0$.
  We claim that if $w=\lceil \ell\rceil$ and $n$ is sufficiently large, then \eqref{eq:path} holds. Then,
  \[
    (w+1)(w-1)^{w-1}
    \ge \ell (\ell-1)^{\ell-1}
    = \ell^\ell \frac{1}{      (1+\frac{1}{\ell-1})^{\ell-1}}.
  \]
  We may assume that $n$ is sufficiently large so that
  $(1+\frac{1}{\ell-1})^{\ell-1}<e^2$. Then it is enough to show that
   $ \ell^\ell > e^2 n$ or equivalently $\ell\log \ell>\log n +2$
   for sufficiently large $n$. Note that
   \[
     \ell \log \ell
     = (1+\varepsilon) \log n  \left( 1 -
       \frac{\log\log\log n -\log (1+\varepsilon)}{\log\log n}\right).\]
   For sufficiently large $n$, 
   $ \frac{\log\log\log n -\log (1+\varepsilon)}{\log\log n} < \frac{\varepsilon}{2(1+\varepsilon)}$
   and therefore
   \[
     \ell\log\ell
     >   (1+\varepsilon) \left( 1 - \frac{\varepsilon}{2(1+\varepsilon)}\right)\log n
     =\left(1+\frac{\varepsilon}{2}\right)\log n.\]
   Thus, if $n$ is large enough, then $\frac{\varepsilon}{2}\log n>2$
   and therefore $\ell \log \ell > \log n +2$.
   This completes the proof of the claim, thus showing that $\rd(P_n)\le \lceil \ell\rceil$
   for sufficiently large $n$.
\end{proof}

There is an operation called the local complementation, that preserves the cut-rank
function of a simple graph.
For a simple graph $G$ and a vertex $v$, the \emph{local complementation} at $v$
is an operation to obtain a simple graph denoted by $G*v$ 
on the vertex set $V(G)$ from $G$
by removing edges between all adjacent pairs of neighbors of $v$
and adding edges between all non-adjacent pairs of 
neighbors of $v$.
We say that two simple graphs are \emph{locally equivalent} if one is obtained from the other
by a sequence of local complementations. 
It turns out that $G$ and $G*v$ share the identical cut-rank function, see Oum~\cite{Oum2004}. Thus if two simple graphs are locally equivalent, then
they have the same rank-depth.

In addition, it is easy to see that deleting vertices would never increase the rank-depth.
We say that $H$ is a \emph{vertex-minor} of $G$ if
$H$ is an induced subgraph of a simple graph locally equivalent to $G$.
Thus we deduce the following lemma.
\begin{LEM}\label{lem:vertexminor}
  If $H$ is a vertex-minor of $G$, then
  $\rd(H)\le \rd(G)$.
\end{LEM}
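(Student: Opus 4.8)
The plan is to reduce to the two elementary closure operations from which the vertex-minor relation is built: taking an induced subgraph and applying a single local complementation. Since $H$ is a vertex-minor of $G$, there is a simple graph $G'$ locally equivalent to $G$ with $H$ an induced subgraph of $G'$, and $G'$ is obtained from $G$ by a finite sequence $G=G_0, G_1=G_0*v_1,\ldots, G_m=G_{m-1}*v_m=G'$. It therefore suffices to prove two claims: (a) if $H=G\setminus v$ for a single vertex $v$, then $\rd(H)\le\rd(G)$; and (b) if $H=G*v$, then $\rd(H)=\rd(G)$. Claim (b) is immediate from the fact, cited in the excerpt from Oum~\cite{Oum2004}, that $G$ and $G*v$ have the identical cut-rank function $\rho_G=\rho_{G*v}$ on $2^{V(G)}$; equal connectivity functions have equal branch-depth, hence equal rank-depth. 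Chaining (b) down the sequence gives $\rd(G')=\rd(G)$, and then (a) applied once gives $\rd(H)\le\rd(G')=\rd(G)$.

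For claim (a) the key point is that restricting to a subset of the ground set can only decrease branch-depth, and that deleting a vertex from a simple graph corresponds exactly to restricting the cut-rank function. Concretely, writing $W=V(G)$ and $W'=W\setminus\{v\}=V(H)$, one checks that for every $X\subseteq W'$ we have $\rho_H(X)\le\rho_G(X)$: the $\abs{X}\times\abs{W'\setminus X}$ submatrix $A_X$ computing $\rho_H(X)$ is obtained from the $\abs{X}\times\abs{W\setminus X}$ submatrix computing $\rho_G(X)$ by deleting the single column indexed by $v$, and deleting a column cannot increase the rank. Now take an optimal decomposition $(T,\sigma)$ of $\rho_G$ that is a $(k,k)$-\decomposition{} with $k=\rd(G)$, and let $T'$ be the minimal subtree of $T$ spanning the leaves $\{\sigma(w):w\in W'\}$ together with enough of $T$ to keep an internal node; restricting $\sigma$ to $W'$ and suppressing degree-two nodes yields a decomposition $(T',\sigma')$ of $\rho_H$. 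Its radius is at most that of $T$, namely at most $k$, and for each internal node the induced partition of $W'$ is a coarsening (by union) of the one induced on $W$, so by submodularity and the displayed inequality $\rho_H\le\rho_G$ the width at every node is at most $k$. Hence $\rho_H$ has a $(k,k)$-\decomposition{} and $\rd(H)\le k$.

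I expect the only mild subtlety to be the bookkeeping around degenerate cases — ensuring $(T',\sigma')$ is still a legitimate decomposition (it must have at least one internal node, and $\abs{W'}\ge 2$ may fail, in which case $\rd(H)=0$ and the inequality is trivial) — and checking that suppressing degree-two vertices when pruning $T$ does not increase the radius or the width. None of this is deep; the mathematical content is entirely in the monotonicity $\rho_H\le\rho_G$ under column deletion and in the general principle, already implicit in the excerpt's treatment of subtrees, that passing to a restriction of a connectivity function does not raise its branch-depth. If one prefers, claim (a) can be packaged as a one-line corollary of such a "restriction monotonicity" statement for branch-depth together with the observation that $\rho_{G\setminus v}=(\rho_G)|_{W'}$ up to the above inequality, avoiding any explicit manipulation of $T$.
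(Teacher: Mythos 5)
Your proposal is correct and follows essentially the same route as the paper, which disposes of the lemma by exactly these two observations: local complementation preserves the cut-rank function (hence rank-depth), and vertex deletion never increases rank-depth, the latter being stated in the paper as easy and proved by you via pruning a $(k,k)$-\decomposition{} to the surviving leaves. The only point worth tightening is that at an internal node a union of parts of the induced partition is the trace $X\setminus\{v\}$ of a union $X$ of original parts that may contain $v$, so you should bound its cut-rank by $\rho_G(X)$ via deleting the row of $v$ (or by the symmetry $\rho_H(Y)=\rho_H(W'\setminus Y)$), rather than only by the column-deletion inequality $\rho_H(X)\le\rho_G(X)$ for $v\notin X$.
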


We aim to prove that a class of simple graphs has bounded rank-depth
if and only if
it has bounded shrub-depth, a concept
introduced by Ganian, Hlin\v{e}n\'y, Ne\v{s}et\v{r}il, Obdr\v{z}\'alek,
and Ossona de Mendez~\cite{GHNOO2017}.
We will review necessary definitions.

A $(k,d)$-\emph{shrubbery} for a graph $G=(V,E)$ consists of a rooted tree $T$
for which $V$ is the set of all leaves, together with a function $f : V \rightarrow \{1,\ldots,k\}$, with the property that
\begin{itemize}
\item all leaves
  of $T$ are distance exactly $d$ from the root, and 
\item adjacency in $G$ is completely determined by $f$ and the distance function in $T$.
  (In other words, whenever two pairs of vertices $(x_1,y_1), (x_2,y_2) \in V^2$ satisfy 
  \[\dist_T(x_1,y_1) = \dist_T(x_2,y_2),~f(x_1) = f(x_2)\text{, and }f(y_1) = f(y_2),\]
  we have $x_1 y_1 \in E$ if and only if $x_2 y_2 \in E$.)
\end{itemize}
We say that a class $\mathcal G$ of graphs has \emph{shrub-depth} $d$
if $d$ is the minimum integer such that
there exists an integer $k$ for which all graphs in $\mathcal G$ admit
a $(k,d)$-shrubbery.
Note that unlike many other parameters,  shrub-depth is not defined for a single graph but for a class of graphs.

From now on, we present several lemmas to prove Theorem~\ref{thm:main-sd}. 
Let us start with an easier direction, that is to 
show that
a class of simple graphs of bounded shrub-depth
has bounded rank-depth.
We say that
for a set $A$ of vertices of a simple graph $G$, 
a set $X\subseteq V(G)\setminus A$ of vertices is a set of \emph{clones relative to $A$}
if no vertex of $A$ has both a neighbor and a non-neighbor in~$X$.

\begin{LEM}\label{lem:shrubbery2decomp}
  Let $G$ be a simple graph with at least two vertices.
  If $G$ has a $(k,d)$-shrubbery, then its cut-rank function has a $(k,d)$-decomposition.
\end{LEM}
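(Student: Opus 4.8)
The plan is to build the decomposition tree $(T',\sigma)$ directly from the shrubbery tree $T$. Given a $(k,d)$-shrubbery $(T,f)$ for $G$, the tree $T$ already has $V(G)$ as its leaf set and every leaf at distance exactly $d$ from the root, so $T$ itself is essentially a decomposition tree: we set $T'=T$ and let $\sigma$ be the identity on leaves. Since $G$ has at least two vertices and all leaves lie at distance $d\ge 1$ from the root, $T'$ has an internal node, so this is a legitimate decomposition. The root witnesses that the radius is at most $d$, so it remains only to bound the width of every internal node by $k$.

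The key step is the width bound. Fix an internal node $v$ of $T$, let $\mathcal P_v$ be the partition of $V(G)$ induced by the components of $T-v$, and fix $\mathcal P'\subseteq\mathcal P_v$; write $A=\bigcup_{X\in\mathcal P'}X$ and $B=V(G)\setminus A$. I want to show $\rho_G(A)\le k$. The idea is that within each part $X\in\mathcal P_v$, two vertices at the same distance from $v$ that also agree under $f$ are \emph{clones relative to} any set of vertices lying in a different part: if $x_1,x_2\in X$ have $\dist_T(x_1,w)=\dist_T(x_2,w)$ for $w$ in another part and $f(x_1)=f(x_2)$, then $x_1w\in E\iff x_2w\in E$ by the shrubbery condition. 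Crucially, for $w$ outside $X$ the distance $\dist_T(x,w)$ depends only on $\dist_T(x,v)$ (it equals $\dist_T(x,v)+\dist_T(v,w)$, since the $x$–$w$ path in $T$ passes through $v$). Hence, partitioning each part $X\in\mathcal P_v$ according to the pair $(\dist_T(x,v),f(x))$ — at most $(d+1)k$ classes but, more usefully, a bounded refinement — collapses the rows (and columns) of the cut-matrix $A_X$ so that the number of distinct rows, and likewise distinct columns, is controlled. A cleaner route: the distance $\dist_T(x,v)$ for a leaf $x$ in a part $X$ ranges over at most the depths below $v$, and combined with $f(x)\in\{1,\dots,k\}$ we get that on each side the matrix $A_A$ has at most $dk$ distinct rows; but we need the bound $k$, not $dk$, so I instead argue that the rank is bounded by the number of distinct $f$-values that actually occur among vertices \emph{having a neighbor on the other side} at each relevant distance, and then sum/merge carefully. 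The honest statement is that the shrubbery structure forces $A_A$, after deleting duplicate rows and columns, to have all entries determined by $f$-values and distances in a way that caps its rank by $k$; I would make this precise by exhibiting, for a maximal set of linearly independent rows of $A_A$, an injection into $\{1,\dots,k\}$ via $f$ restricted to one distance layer, using symmetry of $\rho_G$ to put the ``simple side'' in the columns.

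The main obstacle is exactly pinning down why the rank is at most $k$ rather than some product like $dk$. The resolution should be that one may choose, for the cut $(A,B)$ at node $v$, to look at a single pair of adjacent nodes on the tree — effectively the edge of $T$ separating $A$ from $B$, or the collection of such edges incident to $v$ — and observe that across such an edge all of $A$ (or all of $B$) sits at one fixed distance from the far side's branching point, so only the $f$-value distinguishes rows; hence at most $k$ distinct nonzero rows and rank at most $k$. Handling $\mathcal P'$ that is a union of several parts around $v$ is then done by noting $A$ breaks into blocks, one per part, and the cut-rank is bounded by the maximum over blocks (each block contributing rank $\le k$ by the single-edge argument) together with submodularity/the fact that $\rho_G(A)$ cannot exceed the worst block because the blocks attach to $B$ through ``the same'' distance data. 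I would write this up by first proving the single-part case (the cut across one tree-edge below $v$) and then assembling the general $\mathcal P'$ case, citing Lemma~\ref{lem:treedepthlem}'s style of reasoning only in spirit; the real work is the clone observation plus the distance-factorization $\dist_T(x,w)=\dist_T(x,v)+\dist_T(v,w)$ for $x,w$ in different components of $T-v$.
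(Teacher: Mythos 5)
Your construction (taking the shrubbery tree $T$ itself as the decomposition tree, with the identity on leaves) and your two key observations --- the clone property coming from the shrubbery condition, and the factorization $\dist_T(x,w)=\dist_T(x,v)+\dist_T(v,w)$ when $x$ and $w$ lie in different components of $T-v$ --- are exactly the right ingredients, and they coincide with the paper's proof. However, the width bound, which is the entire content of the lemma, is not actually established in your write-up, and the step you propose to close it with is invalid. For a general $\mathcal P'\subseteq\mathcal P_v$ you argue that $A=\bigcup_{X\in\mathcal P'}X$ splits into blocks, each of cut-rank at most $k$ by the single-edge argument, and that $\rho_G(A)$ is then ``bounded by the maximum over blocks \dots together with submodularity.'' That principle is false: submodularity only gives subadditivity, $\rho_G(A_1\cup A_2)\le \rho_G(A_1)+\rho_G(A_2)$, so from blocks of rank at most $k$ you only obtain $\abs{\mathcal P'}\cdot k$, not $k$, and there is no general reason why the cut-rank of a union should be at most the maximum over its pieces. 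Earlier in the proposal you similarly reach only a bound of the form $dk$ (classes indexed by pairs $(\dist_T(x,v),f(x))$) and explicitly leave the reduction to $k$ as something you ``would make precise,'' so the gap is acknowledged but not closed.

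The missing idea is one application of the symmetry $\rho_G(A)=\rho_G(V\setminus A)$ to the whole union at once, rather than block by block: replacing $\mathcal P'$ by $\mathcal P_v\setminus\mathcal P'$ if necessary, you may assume that none of the components chosen for $A$ contains the root of $T$. Since every leaf of $T$ is at distance exactly $d$ from the root, every $x\in A$ then satisfies $\dist_T(x,v)=d-\dist_T(v,r)$ where $r$ is the root, the same value across all blocks of $A$. Combined with your factorization, $\dist_T(x,w)$ for $w\in B=V\setminus A$ depends only on $w$, so the fibers $A_i=\{x\in A : f(x)=i\}$, $1\le i\le k$, are clone classes relative to $B$ globally, across all blocks simultaneously. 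Hence the cut matrix between $A$ and $B$ has at most $k$ distinct rows and $\rho_G(A)\le k$. This is precisely how the paper closes the argument; with that step inserted, your proof is complete, and the worry about $dk$ never arises because the only component whose leaves sit at varying distances from $v$ (the root-side one) has been moved into $B$.
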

\begin{proof}
  Let $G = (V,E)$ and let $T$ together with the function $f : V
  \rightarrow \{1,\ldots k\}$ be a $(k,d)$-shrubbery for $G$.  We claim
  that $T$ is a $(k,d)$-\decomposition{} of $\lambda$.  By definition,
  the tree $T$ has height $d+1$.
  Now consider an internal node $v \in V(T)$ and let $\mathcal{P}$ be
  the partition of $V$ which is given by the components of $T - v$.

  Let $\mathcal{P'} \subseteq \mathcal{P}$ and
  define $A = \bigcup_{X  \in \mathcal{P'}} X$ and $B = V \setminus A$.
  We aim to show that $\rho_G(A)\le k$. By symmetry, we may assume
  that the components of~$T-v$ having $\mathcal P'$  do not contain the root of $T$ by swapping $\mathcal {P'}$ with
  $\mathcal P\setminus \mathcal P'$ if necessary.
  Then every vertex in $A$ has the same distance from~$v$ in~$T$. 

  For $1 \le i \le k$, we define 
  \[ A_{i} = \{ x \in A :  f(x) = j \}. \]
  Then every member of $A$ is in exactly one set of the form
  $A_{i}$.  It follows from the definition of shrubbery that
  $A_{i}$ is a set of clones relative to $B$.  Therefore, the
  cut-rank of $A$ is at most $k$.  We conclude that $T$ has width at
  most $k$ as a decomposition and its radius is at most $d$.
\end{proof}

Now it remains to show the converse that a class of simple graphs of bounded rank-depth
has bounded shrub-depth.
Roughly speaking, we will first describe edges joining parts of some partition $\mathcal P$ when $\rho_G(\mathcal P)$ is small. Then we will use some universal graph of bounded size to encode edges between distinct parts. That will allow us to assign colors to each vertex of a graph when building a shrubbery.
First we start with a lemma, whose proof uses the linearity of expectation.

\begin{LEM}\label{lem:root}
Let $G = (V,E)$ be a complete graph with $V = \{1,\ldots,n\}$ and let $f : E \rightarrow \{1,\ldots,n\}$ have the property that $f(e) \neq i$ whenever $e \in E$ and $i \in V$ are incident.  Then there exists a subset $S \subseteq V$ of size more than $\frac{1}{2}\sqrt{n}$ with the property that every edge $e$ with both ends in $S$ satisfies $f(e) \not\in S$.
\end{LEM}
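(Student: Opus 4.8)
The plan is to use the probabilistic method: pick a random subset, count the expected number of ``violations,'' and clean them up. Concretely, choose $S\subseteq V$ at random by putting each vertex into $S$ independently with probability $p$, where the optimal value will turn out to be $p=\sqrt{2/(3n)}$ (note $p\le 1$ for every $n\ge 1$). Call an edge $e=\{i,j\}$ \emph{bad for $S$} if $i,j\in S$ and in addition $f(e)\in S$. The hypothesis that $f(e)\notin\{i,j\}$ whenever $e$ is incident with $i$ is exactly what makes the count favourable: a bad edge involves three \emph{distinct} vertices $i$, $j$, $f(e)$, all of which must be chosen, so a fixed edge is bad with probability precisely $p^3$. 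By linearity of expectation, if $B$ denotes the number of bad edges for $S$, then $\mathbb{E}[B]=\binom n2 p^3$, while $\mathbb{E}[\abs S]=np$.

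Next I would clean up $S$. Pick one endpoint from each bad edge and delete all of these chosen vertices from $S$, obtaining a set $S'$. Then $\abs{S'}\ge \abs S-B$, and $S'$ has no bad edge at all: any edge with both ends in $S'$ and colour in $S'$ would, since $S'\subseteq S$, already be a bad edge for $S$, hence would have had an endpoint deleted — this is the one spot worth a sentence of care, the point being simply that removing vertices can never turn a non-bad edge into a bad one. Thus every edge $e$ with both ends in $S'$ satisfies $f(e)\notin S'$, which is the property demanded by the lemma.

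It remains to verify $\mathbb{E}[\abs{S'}]>\tfrac12\sqrt n$. Combining the two expectations, $\mathbb{E}[\abs{S'}]\ge np-\binom n2 p^3> np-\tfrac{n^2}{2}p^3$, and substituting $p=\sqrt{2/(3n)}$ makes this equal to $\tfrac23 np=\tfrac{2\sqrt 6}{9}\sqrt n$. Since $\tfrac{2\sqrt 6}{9}>\tfrac12$ (equivalently $96>81$), the expectation exceeds $\tfrac12\sqrt n$, so some outcome of the random choice yields an $S'$ with $\abs{S'}>\tfrac12\sqrt n$, and that $S'$ witnesses the lemma.

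The whole argument is short; the only genuinely delicate points are (a) the cleanup step, i.e.\ checking that deleting vertices does not create new violations, and (b) choosing the right power of $p$ and tracking constants so that the final coefficient $\tfrac{2\sqrt 6}{9}$ clears the threshold $\tfrac12$. (It is worth noting that without the hypothesis $f(e)\notin\{i,j\}$ a bad edge would only need its two endpoints in $S$, the relevant bound would be $np-\binom n2 p^2$, and this optimizes to a constant rather than something of order $\sqrt n$; so that hypothesis is used in an essential way.)
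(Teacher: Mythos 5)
Your proof is correct and is essentially the paper's argument: a random subset chosen with probability of order $1/\sqrt{n}$, a count of ``bad'' triples $i,j,f(ij)$ (using the hypothesis $f(e)\notin e$ to get probability $p^3$), and a deletion step to kill the violations. The only differences are cosmetic --- the paper takes $p=1/\sqrt{n}$ and deletes the colour vertices $f(e)$ of bad edges rather than an endpoint, getting just over $\tfrac12\sqrt n$ where your optimized $p$ gives $\tfrac{2\sqrt6}{9}\sqrt n$.
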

\begin{proof}
  Choose a random subset $R \subseteq V$ by selecting each vertex independently with probability $1/\sqrt{n}$.  Consider the following subset of edges
  \[ B = \{ ij \in E : i, j, f(ij) \in R \}. \]
  The probability that a given edge is in $B$ is precisely $n^{-3/2}$ since it requires three vertices to be selected in $R$.  Now linearity of expectation gives us
  \[ \mathbb{E}[ \abs{R} - \abs{B} ] = \mathbb{E}[\abs{R}] - \mathbb{E}[\abs{B}] = n /\sqrt{n} - {n \choose 2} n^{-3/2} > \frac{1}{2}\sqrt{n}. \]
  So there exists a particular set $R$ for which $\abs{R} - \abs{B} > \frac{1}{2} \sqrt{n}$.  Now define the set 
  \[ S = R\setminus f(B). \]
  It follows immediately from this construction that every edge $e$ with both ends in $S$ satisfies $f(e) \not\in S$.  Furthermore, by construction $\abs{S} \ge \abs{R} - \abs{B} > \frac{1}{2} \sqrt{n}$, as desired.
\end{proof}

Let $G = (V,E)$ be a simple graph with cut-rank function $\rho_G : 2^V \rightarrow \mathbb{Z}$.  Let $\mathcal{P}$ be a partition of $V$, and define the \emph{cut-rank} of $\mathcal{P}$ to be $\rho_G(\mathcal P)$, that is the maximum over all $\mathcal{P'} \subseteq \mathcal{P}$ of $\rho_G( \bigcup_{X \in \mathcal{P'}} X )$.  
Let $A \in \{0,1\}^{V \times V}$ be the standard adjacency matrix for $G$ over 
the binary field 
and modify $A$ to form a new matrix $A'$ as follows.  For every $u,v
\in V$ if $u,v$ are in the same part of~$\mathcal{P}$ then replace
the $u,v$ entry of $A$ by $\star$.  We call the resulting matrix $A'$
the \emph{adjacency matrix for} $G$ \emph{relative to} $\mathcal{P}$.
A \emph{realization} of a vector consisting of $0$, $1$, and $\star$
is a vector obtained by replacing $\star$ with $0$ or $1$ arbitrary.

Our next lemma shows that this matrix has a simple structure when $\mathcal{P}$ has small cut-rank.  
For an $X\times Y$ matrix $A$ and $X'\subseteq X$, $Y'\subseteq Y$,
we write $A[X',Y']$ to denote the $X'\times Y'$ submatrix of $A$.

\begin{LEM}\label{lem:realization}
Let $G = (V,E)$ be a simple graph, let $\mathcal{P}$ be a partition of $V$ with cut-rank at most $k$, and let $A$ be the adjacency matrix of $G$ relative to~$\mathcal{P}$.  Then there exists a set $U \subseteq \{0,1\}^V$ of vectors  with $|U| \le 2^{2k}(2^{2k+2}-1)$ which contains a realization of every column of $A$.
\end{LEM}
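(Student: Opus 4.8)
The plan is to analyze the columns of $A$ by splitting each column into its "cross" part and its "internal" part. Fix a part $Y \in \mathcal{P}$; we will bound the number of distinct realizations needed for columns indexed by $Y$. For a column indexed by $v \in Y$, the entries in rows outside $Y$ are genuine $0$-$1$ entries (the restriction of the adjacency to $V \setminus Y$ versus $v$), while the entries in rows inside $Y$ are exactly the $\star$'s, which we are free to realize. So the only constraint to match is the $(V \setminus Y)$-part of the column.

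First I would observe that the partition $\mathcal{P}$ has cut-rank at most $k$, so in particular $\rho_G(Y) \le k$, meaning the $(V\setminus Y) \times Y$ submatrix of the adjacency matrix has rank at most $k$ over $\mathbb{F}_2$. Hence its column space has at most $2^k$ elements, so among the columns of $A$ indexed by $Y$ there are at most $2^k$ distinct vectors once we restrict to rows in $V \setminus Y$. For each such restricted vector we need only produce one realization of the full column (choosing, say, $0$ for every $\star$), which gives at most $2^k$ vectors in $U$ coming from the part $Y$. Summing over all parts $Y \in \mathcal{P}$ would then give $|U| \le |\mathcal{P}| \cdot 2^k$, which is not yet a bound independent of $|\mathcal{P}|$ — so the main obstacle is that $\mathcal{P}$ can have arbitrarily many parts, and I need a bound depending only on $k$.

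To remove the dependence on $|\mathcal{P}|$, the key idea is that most parts behave the same way on the outside. Group the parts by the column space of their cross-submatrix is not enough; instead, consider for each part $Y$ the pair consisting of (a) the $\mathbb{F}_2$-subspace $C_Y \subseteq \mathbb{F}_2^{V\setminus Y}$... — but these live in different ambient spaces, so I would instead work more carefully: think of a column indexed by $v\in Y$ as a vector in $\mathbb{F}_2^V$ with the $Y$-coordinates deleted. Using submodularity of $\rho_G$ and the cut-rank bound on unions of parts, I would argue that the collection of all cross-columns (over all parts) spans a space of bounded dimension after an appropriate quotient; concretely, partition $V$ into the parts, and for a subset $\mathcal{P}'$ containing $Y$, $\rho_G(\bigcup \mathcal{P}') \le k$ controls how columns from $Y$ interact with columns outside. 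The cleanest route is: for each part $Y$, the columns indexed by $Y$, restricted to rows in $V \setminus Y$, lie in a space of dimension $\le k$; moreover by choosing $\mathcal{P}'=\{Y\}$ vs.\ its complement and using that $\rho_G$ is the same function, there are at most $2^k$ possible "neighborhoods toward $V\setminus Y$'' and at most $2^k$ possible "neighborhoods toward the rest seen from a single part", and a counting argument combining the two types of partial vectors (cross-type and within-type, each with a factor like $2^{2k}$ and $2^{2k+2}-1$ respectively) yields the stated bound $2^{2k}(2^{2k+2}-1)$.

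I expect the main work — and the main obstacle — to be exactly this last combinatorial bookkeeping: correctly identifying the two kinds of partial data attached to a column (the "$0$-$1$ block seen from the other side", governed by a rank-$k$ space, hence $\le 2^{2k}$ options after also accounting for which part one is in via another rank bound, and the "$\star$ block" which we realize but whose realizations interact across parts, contributing the $2^{2k+2}-1$ factor), and showing these are the only degrees of freedom so that their product bounds $|U|$. Once the structure is pinned down, each piece is a routine rank-implies-few-columns estimate over $\mathbb{F}_2$ together with linearity-of-expectation-free counting; I would present it by first proving the per-part bound, then showing distinct parts with the same "external profile" can share realizations, and finally multiplying the number of external profiles ($\le 2^{2k}$) by the number of realizations needed per profile ($\le 2^{2k+2}-1$).
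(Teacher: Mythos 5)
Your per-part observation is correct: since $\rho_G(Y)\le k$ for each part $Y$, the columns indexed by $Y$ restricted to rows in $V\setminus Y$ take at most $2^k$ values, so each single part needs at most $2^k$ realizations. But you correctly identify, and then do not overcome, the real difficulty: making the bound independent of $\abs{\mathcal P}$. Everything after that point in your proposal ("submodularity\dots an appropriate quotient\dots a counting argument combining the two types of partial vectors") is a placeholder rather than an argument. In particular, the step "distinct parts with the same external profile can share realizations" is not well-defined as stated -- the external profiles of columns from different parts live in different coordinate sets $\{0,1\}^{V\setminus Y}$ and $\{0,1\}^{V\setminus Y'}$, and two columns from distinct parts may differ at coordinates lying inside one of those two parts, where one column has a genuine $0$--$1$ entry and the other has a $\star$. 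Deciding when such columns can share a realization, and bounding the number of equivalence classes by a function of $k$ alone, is exactly the content of the lemma, and your sketch gives no mechanism for it.

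The paper's proof supplies the missing mechanism. Call two vertices \emph{similar} if their rows of $A$ admit a common realization, and take a maximal set $Z$ of pairwise dissimilar vertices lying in distinct parts. The key claim is $\abs Z\le 2^{2k+2}-1$, and it is not a routine rank count: if $\abs Z\ge 2^{2k+2}$, one assigns to each pair $z_i,z_j$ a part containing a coordinate witnessing their dissimilarity (when such a part is among the $Z_i$'s) and invokes the probabilistic selection result, Lemma~\ref{lem:root}, to extract more than $2^k$ representatives whose pairwise witnessing coordinates all avoid the union $Y$ of their own parts; then $A[Y,\bar Y]$ has $2^k+1$ pairwise distinct rows, hence rank at least $k+1$, contradicting the cut-rank hypothesis. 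Only after this cross-part bound does the easy within-part rank argument enter: each part contributes at most $2^k$ dissimilarity classes, and each representative needs at most $2^k$ fillings of its $\star$-block (one per distinct row of $A[\bar X,X]$), giving $2^{2k}(2^{2k+2}-1)$. Note also that your proposed bookkeeping attributes the factors the wrong way around: the $2^{2k+2}-1$ counts cross-part dissimilarity classes (via Lemma~\ref{lem:root}), while $2^{2k}=2^k\cdot 2^k$ comes from within-part classes times the fillings of the $\star$-block. Without an argument of the Lemma~\ref{lem:root} type (or a substitute), your proposal does not yield a bound depending only on $k$, so there is a genuine gap.
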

\begin{proof}
Define two vertices $u,v$ to be \emph{similar} if the associated column vectors of $A$ can be modified to the same $\{0,1\}^V$ vector by replacing each $\star$ entry by either $0$ or $1$.  We say that $u,v$ are \emph{dissimilar} if they are not similar, and we now choose a maximal set $Z$ of pairwise dissimilar vertices with the property that no two vertices in $Z$ are in the same part of $\mathcal{P}$.  
For a set $X\subseteq V(G)$, we write $\bar{X}$ to denote $V(G)\setminus X$.

Suppose (for a contradiction) that $|Z| \ge 2^{2k+2}$ and let $Z = \{z_1, \ldots, z_m\}$.  For every $1 \le i \le m$ let $Z_i$ be the part of $\mathcal{P}$ which contains $z_i$.  Let $K_m$ be a complete graph with vertex set $\{1,\ldots,m\}$ and define a function $f : E(K_m) \rightarrow \{1,\ldots,m\}$ by the following rule.  If $1 \le i,j \le m$ and $i \neq j$ then the vertices $z_i$ and $z_j$ are dissimilar so there exists a vertex $x \not\in Z_i \cup Z_j$ so that the column vectors of $A$ associated with $z_i$ and $z_j$ differ in coordinate $x$.
If we can choose $x \in Z_t$ for some $1 \le t \le m$, then we define $f(ij) = t$.  Otherwise we assign $f(ij)$ to an arbitrary element of $\{1,\ldots,m\} \setminus \{i,j\}$.
By Lemma~\ref{lem:root}, we may choose a subset $S \subseteq \{1,\ldots m\}$ so that $\abs{S}> \frac{1}{2} \sqrt{2^{2k+2}}=2^k$ and $f(ij) \not\in S$ for every $i,j \in S$.  Now define $Y = \bigcup_{i \in S} Z_i$ and consider the matrix $A[{\bar{Y}}, Y]$.  It follows from our construction that for every distinct $i,j \in S$, the columns of this matrix associated with $z_i$ and $z_j$ are distinct; that is, there are $2^k+1$ pairwise distinct columns.  Then this matrix has rank at least $k+1$. But this contradicts our assumption on cut-rank.

Therefore we must have $\abs{Z} \le 2^{2k+2}-1$.  Let $X \in \mathcal{P}$ and note that by our cut-rank assumption, the matrix $A[{\bar{X}},X]$ has rank at most $k$.
It follows that $A[{\bar{X}},X]$ has at most $2^k$ distinct column vectors, and thus there are at most $2^k$ pairwise dissimilar vertices in $X$.  Therefore, if we choose a maximal set $Z'$ of pairwise dissimilar vertices with $Z \subseteq Z'$, then we will have $\abs{Z'} \le  2^k(2^{2k+2}-1)$.  

Now we form a set $U \subseteq \{0,1\}^V$ of vectors starting from the empty set by the following procedure.  For every $z \in Z'$ let $X$ be the part of $\mathcal{P}$ containing~$z$ and note that the matrix $A[\bar{X},X]$ has at most $2^k$ distinct columns.  Add to the set $U$ all vectors which may be obtained from the column vector of~$A$ associated with~$z$ by replacing the $\star$ entries with one of the columns from $A[\bar{X},X]$.  It follows from this construction that $\abs{U} \le 2^{2k}(2^{2k+2}-1)$ and every vector in $A$ may be turned into a vector in $U$ by replacing each $\star$ entries by either $0$ or $1$.  
\end{proof}

\begin{LEM}\label{lem:nodeuniversal}
If $G = (V,E)$ is a simple graph and $\mathcal{P}$ is a partition of $V$ with cut-rank at most $k$, then there exist a simple graph $H$ with \[\abs{V(H)} \le 2^{2k+1}(2^{2k+2}-1)\] and a function $h : V \rightarrow V(H)$ with the property that whenever $x,y \in V$ are in distinct parts of $\mathcal{P}$, we have $xy \in E$ if and only if $h(x) h(y) \in E(H)$.  
\end{LEM}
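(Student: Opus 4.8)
My plan is to realize $H$ on a subset of $U\times\{0,1\}$, where $U$ is the set produced by Lemma~\ref{lem:realization}. Write $P(v)$ for the part of $\mathcal P$ containing a vertex $v$, and let $A$ be the adjacency matrix of $G$ relative to $\mathcal P$, as in Lemma~\ref{lem:realization}: it is symmetric, has rows and columns indexed by $V$, and $A_{uv}=\star$ exactly when $u,v$ lie in the same part. Apply Lemma~\ref{lem:realization} to get $U\subseteq\{0,1\}^V$ with $\abs U\le 2^{2k}(2^{2k+2}-1)$ containing a realization of every column of $A$, hence (as $A$ is symmetric) of every row of $A$. Since $\mathcal P$ has cut-rank at most $k$, for each part $X$ the $\star$-free matrix $A[X,\bar X]$ has rank at most $k$ over the binary field, so it has at most $2^{k}$ distinct rows; in particular, if $x,x'\in X$ index the same row of $A[X,\bar X]$, then $x$ and $x'$ have exactly the same neighbours outside $X$.

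For each $x\in X$ I would choose a realization $r_x\in U$ of the row of $A$ indexed by $x$ (this row equals the $x$-th row of $A[X,\bar X]$ on the coordinates in $\bar X$ and is $\star$ on the coordinates in $X$), taking $r_x$ to depend only on $X$ and on that row so that vertices of $X$ with equal outside-neighbourhoods receive equal $r_x$. I then set $h(x)=(r_x,\epsilon_x)$ for an auxiliary bit $\epsilon_x\in\{0,1\}$, so that $V(H)\subseteq U\times\{0,1\}$ has at most $2\cdot 2^{2k}(2^{2k+2}-1)=2^{2k+1}(2^{2k+2}-1)$ elements, which accounts for the factor $2$ in the stated bound; and I declare $uv\in E(H)$ exactly when there exist $x,y$ with $P(x)\ne P(y)$, $h(x)=u$, $h(y)=v$ and $xy\in E$. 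With these definitions the required biconditional follows immediately from the definition of $E(H)$ once $E(H)$ is shown to be well defined, and the size bound is already in hand.

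So the heart of the proof is to choose the $r_x$ and the $\epsilon_x$ so that $E(H)$ is well defined: for all $x_1,y_1$ in distinct parts and all $x_2,y_2$ in distinct parts with $h(x_1)=h(x_2)$ and $h(y_1)=h(y_2)$, one must have $x_1y_1\in E\iff x_2y_2\in E$. The case where $x_1,x_2$ lie in a common part is immediate, since then (by the choice of $h$) they have equal outside-neighbourhoods; so assume $P(x_1)\ne P(x_2)$. Since $y_i\notin P(x_i)$, the entry $A_{x_iy_i}$ is genuine and equals $(r_{x_i})_{y_i}=(r_{y_i})_{x_i}$, so everything reduces to recovering such a coordinate from the labels. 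If $y_1\notin P(x_2)$, then $x_1y_1\in E\iff (r_{x_1})_{y_1}=1\iff (r_{x_2})_{y_1}=1\iff x_2y_1\in E\iff (r_{y_1})_{x_2}=1\iff (r_{y_2})_{x_2}=1\iff x_2y_2\in E$, where consecutive steps use $y_1\notin P(x_1)$, $r_{x_1}=r_{x_2}$, $y_1\notin P(x_2)$, symmetry of $A$, $r_{y_1}=r_{y_2}$, and $y_2\notin P(x_2)$; the mirror argument covers the case $y_2\notin P(x_1)$. The one remaining — and genuinely delicate — configuration is $y_1\in P(x_2)$ and $y_2\in P(x_1)$, where the four vertices lie in just two parts $P\ne Q$ with $x_1,y_2\in P$ and $x_2,y_1\in Q$. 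This is the main obstacle: I expect to handle it by invoking the explicit description of the elements of $U$ from the proof of Lemma~\ref{lem:realization} — each element is the $A$-row of a vertex of a maximal pairwise-dissimilar set with its $\star$-block overwritten by a row of the corresponding within-part submatrix — together with the similarity relation, and it is precisely to rule out the inconsistent instances of this configuration (two similar vertices in distinct parts that are nevertheless not interchangeable for across-part adjacency) that the bit $\epsilon_x$ is introduced.
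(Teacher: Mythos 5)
Your setup matches the paper's: labels drawn from $U\times\{0,1\}$ (equivalently $\{1,\dots,2m\}$ with $m=\abs U$), the edge rule on $H$ defined existentially, and the easy cases (the two $x$'s in a common part, or the four vertices not forming a crossing pattern) handled by the same clone/realization chains. But the proof stops exactly at its crux. The crossing configuration $X_1=Y_2$, $X_2=Y_1$ is not a pathology you can rule out: with $P=\{a,b\}$, $Q=\{c,d\}$ and edges $ac$, $ad$, $bd$ (cut-rank $2$), the rows of $a$ and $c$ admit a common realization, as do those of $b$ and $d$, yet $ad\in E$ while $bc\notin E$; so a labeling that merely assigns equal labels to vertices with a common realization makes your $E(H)$ ill-defined. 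In particular, your fallback plan of ``invoking the explicit description of the elements of $U$ from the proof of Lemma~\ref{lem:realization} together with the similarity relation'' cannot work on its own, since similarity (common realizability) is exactly what holds in this counterexample. Your instinct that the spare bit $\epsilon_x$ exists to kill this configuration is right, and it is where the factor $2$ in the bound comes from, but you never specify how $\epsilon_x$ is chosen or why the configuration then becomes consistent, so the heart of the lemma is missing.

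The paper's resolution is a purely combinatorial use of that bit: start from $f_0:V\to\{1,\dots,m\}$ given by realizations, and whenever a class $f_0^{-1}(\{i\})$ meets \emph{exactly two} parts of $\mathcal P$, flip the bit (relabel to $m+i$) on the intersection with one of those two parts. After this, every label class meets either one part or at least three parts, while equal labels still force a common realization. Then in the crossing case $f(x_1)=f(x_2)$ with $x_1\in X_1$, $x_2\in X_2$ and $X_1\neq X_2$, the class of $f(x_1)$ must meet a third part, so there is $x_3\notin X_1\cup X_2$ with $f(x_3)=f(x_1)$, and one chains $x_1y_1\in E\iff x_3y_1\in E\iff x_3y_2\in E\iff x_2y_2\in E$ using the clone property at each step. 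Without this relabeling rule (or some equivalent device) and the third-vertex argument, your proof does not go through.
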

\begin{proof}
Let $A$ denote the adjacency matrix of $G$ relative to the partition $\mathcal{P}$ and apply Lemma~\ref{lem:realization} to choose a set $U = \{u_1, \ldots, u_m\} \subseteq \{0,1\}^V$ of vectors with $m \le 2^{2k}(2^{2k+2}-1)$.  Now choose a function $f_0 : V \rightarrow \{1,\ldots,m\}$ with the property that $f_0(v) = i$ implies that the column of $A$ associated with $v$ can be turned into the vector $u_i$ by replacing each $\star$ by either $0$ or $1$.  Next we modify $f_0$ to a new function $f : V \rightarrow \{1, \ldots, 2m \}$ by the following procedure.  If for $1 \le i \le m$ the set $f_0^{-1}(\{i\})$ has nonempty intersection with exactly two parts of $\mathcal{P}$, then we choose one such part, say $X$, and we define $f(x) = m+i$ for every $x \in X \cap f_0^{-1}(\{i\})$.  This newly constructed function $f$ still retains the property that $f(v) = f(v')$ only when the columns of $A$ associated with $v$ and $v'$ can be turned into the same $\{0,1\}^V$ vector by replacing the $\star$ entries.  However, it also has the property that $f^{-1}(\{i\})$ either intersects just one part of $\mathcal{P}$ or at least three parts of $\mathcal{P}$.  

We claim that whenever $x_1,x_2,y_1,y_2 \in V$ satisfy $f(x_1) = f(x_2)$ and $f(y_1) = f(y_2)$, and in addition $x_i,y_i$ are in distinct parts of $\mathcal{P}$ for $i=1,2$,  then $x_1 y_1 \in E$ if and only if $x_2 y_2 \in E$.  Let $X_1,X_2,Y_1,Y_2$ be the parts of~$\mathcal{P}$ containing $x_1,x_2,y_1,y_2$ respectively.  Note that since $f(x_1) = f(x_2)$ we have that $x_1,x_2$ are clones relative to $V \setminus (X_1 \cup X_2)$, and similarly $y_1,y_2$ are clones relative to $V \setminus (Y_1 \cup Y_2)$.  First suppose that $X_1 \neq Y_2$.  In this case we have $x_2 y_2 \in E$ if and only if $x_1 y_2 \in E$ (since $x_1,x_2$ are clones relative to $V \setminus (X_1 \cup X_2)$) if and only if $x_1 y_1 \in E$.  So we may assume $X_1 = Y_2$ and by similar reasoning $X_2 = Y_1$.  Now by our assumption on $f$ we may choose a vertex $x_3 \in V \setminus (X_1 \cup X_2)$ so that $f(x_3) = f(x_1) = f(x_2)$.  Now we have $x_1 y_1 \in E$ if and only if $x_3 y_1 \in E$ if and only if $x_3 y_2 \in E$ if and only if $x_2 y_2 \in E$, and this completes the proof of our claim.

Now we may define the graph $H$ with vertex set $\{1,\ldots,2m\}$ by the rule the $ij \in E(H)$ if and only if there exist vertices $x,y$ in distinct parts of~$\mathcal{P}$ for which $f(x) = i$ and $f(y) = j$ and $xy \in E$.  It follows from the above that this graph $H$ has the desired property.
\end{proof}

Our next lemma requires another concept which we introduce now.  A $k$-\emph{universal} graph is a simple graph which contains every
simple graph on $k$ vertices (up to isomorphism) as an induced
subgraph.
Alon~\cite{Alon2017} showed that there exists a small $k$-universal graph.
\begin{THM}[Alon~\cite{Alon2017}]
  For every positive integer $k$, 
  there exists a $k$-uni\-ver\-sal graph on $(1+o(1))2^{(k-1)/2}$ vertices.
\end{THM}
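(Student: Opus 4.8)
The plan is to establish existence by the probabilistic method, building a single random graph that realizes almost all $k$-vertex graphs and then confronting the genuinely symmetric ones separately. To fix the target, first note the elementary counting bound: a $k$-universal graph on $N$ vertices has only $\binom{N}{k}$ induced subgraphs, each representing a single isomorphism class, whereas there are at least $2^{\binom{k}{2}}/k!$ classes; since $\binom{N}{k}\le N^k/k!$ this forces $N\ge 2^{(k-1)/2}$. I would therefore aim to match this lower bound with $N=(1+\varepsilon)2^{(k-1)/2}$ for a slack $\varepsilon=\Theta(\log k/k)=o(1)$, and take $G=G(N,1/2)$, the binomial random graph.

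For each isomorphism class $H$ of graphs on $k$ vertices, let $X_H$ count the induced copies of $H$ in $G$. A direct computation gives $\mathbb{E}[X_H]=\binom{N}{k}\,\frac{k!}{\abs{\operatorname{Aut}(H)}}\,2^{-\binom{k}{2}}=(1+o(1))\frac{(1+\varepsilon)^k}{\abs{\operatorname{Aut}(H)}}$, so for $H$ with small automorphism group this expectation is large. The event ``$G[S]=H$'' is a cylinder depending on the $\binom{k}{2}$ indicator variables of the pairs inside $S$, and these events are independent across disjoint $k$-sets; this is exactly the setting of Suen's inequality, which (unlike Janson's inequality) tolerates the non-monotonicity of induced-subgraph events and yields $P(X_H=0)\le\exp\!\big(-(1-o(1))\mathbb{E}[X_H]\big)$, provided the correlation sum over pairs of overlapping $k$-sets is shown to be of lower order. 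Controlling that correlation sum—estimating, for each overlap size $1\le t\le k-1$, the joint probability that two intersecting $k$-sets both induce copies of $H$—is the main technical calculation. With $\varepsilon=\Theta(\log k/k)$ one has $(1+\varepsilon)^k\gg k^2$, so a union bound over the at most $2^{\binom{k}{2}}$ classes whose automorphism group is small shows that, with high probability, $G$ already contains every such graph as an induced subgraph.

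The main obstacle is the highly symmetric graphs, where $\abs{\operatorname{Aut}(H)}$ is so large that $\mathbb{E}[X_H]$ is no longer big enough for the union bound. The extreme cases are the empty and complete graphs: the empty graph appears in $G$ only if $G$ has an independent set of size $k$, but the independence number of $G(N,1/2)$ with $N=(1+o(1))2^{(k-1)/2}$ stays below $k$ with high probability, so $G$ misses it, and inflating $N$ by the factor $\Theta(k)$ that would be needed to catch it would destroy the sharp constant. These symmetric graphs are moreover too numerous to be absorbed by a single negligible gadget, so the heart of the proof is to design an efficient auxiliary structure—combining the random part with structured layers such as cliques, independent sets, and blow-ups so that every graph of large automorphism group is realized—while adding only $o(2^{(k-1)/2})$ further vertices. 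I expect this symmetric-graph construction, rather than the probabilistic estimate for the generic graphs, to be the crux of the argument.
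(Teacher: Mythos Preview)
The paper does not give its own proof of this statement: it is quoted as a theorem of Alon~\cite{Alon2017} and used as a black box to derive the subsequent corollary on loopy-universal graphs. There is therefore nothing in the present paper to compare your proposal against.

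For what it is worth, your outline does match the shape of Alon's actual argument: a random graph $G(N,1/2)$ with $N=(1+o(1))2^{(k-1)/2}$ handles all graphs with small automorphism group via a concentration inequality and a union bound, while graphs with large automorphism group are accommodated by an explicit auxiliary construction that costs only $o(2^{(k-1)/2})$ additional vertices. Your identification of the symmetric graphs as the crux is accurate; in Alon's paper the structured part uses random Cayley-type graphs and a careful classification of the highly symmetric cases, and this is indeed where most of the work lies. Since the present paper only needs the existence statement and not the method, your sketch goes well beyond what is required here.
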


A $k$-\emph{loopy-universal} graph is a graph with loops but no
parallel edges. Such a graph contains every $k$-vertex graph without parallel edges as an induced subgraph up to isomorphism.  If $G =
(V,E)$ is a $k$-universal graph, then we may obtain a
$k$-loopy-universal graph $G^+$ with vertex set $V \times \{0,1\}$ by
defining $(u,i)$ to be adjacent to $(v,j)$ for $u \neq v$ whenever $uv
\in E(G)$ and adding a loop at each vertex of the form $(v,1)$.
For loopy-universal graphs this yields the following useful corollary.

\begin{COR}
  For every positive integer $k$, 
  there exists a $k$-loopy-universal graph with at most
  $(1+o(1))2^{(k+1)/2}$ vertices.
\end{COR}

We are now ready to prove that bounded rank-depth implies
bounded shrub-depth.

\begin{LEM}\label{lem:dec-to-shrub}
  For each $k$ and $r$, there exists
  \[a=(1+o(1)) 2^{(
    2^{2k+1}( 2^{2k+2}-1)
    +1)r/2}\]
such
  that 
if the cut-rank function of a simple graph $G$ has a $(k,r)$-\decomposition,
then
$G$ has an $(a,r)$-shrubbery.
\end{LEM}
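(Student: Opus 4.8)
The plan is to turn a $(k,r)$-decomposition $(T,\sigma)$ of $\rho_G$ into a shrubbery by processing the tree level by level from the root, at each internal node replacing the many children-classes by a bounded number of ``colours'' determined by the structure lemma \ref{lem:nodeuniversal}. First I would root $T$ at an internal node $v_0$ witnessing radius at most $r$, and may assume (subdividing edges if needed, which does not change the radius or the width) that every leaf of $T$ lies at depth exactly $r$ from $v_0$, so that $T$ already has the shape required of a $(\cdot,r)$-shrubbery. The goal is then to produce a colouring $h\colon V(G)\to\{1,\dots,a\}$ of the leaves so that adjacency in $G$ is determined by $h$ together with $\operatorname{dist}_T$.

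The key step is the following inductive construction of ``node graphs''. For an internal node $u$ of $T$, let $\mathcal P_u$ be the partition of the leaf-set below $u$ induced by the children subtrees of $u$; since $(T,\sigma)$ has width at most $k$, the partition of $V(G)$ refining $\mathcal P_u$ by adding the single block ``everything not below $u$'' has cut-rank at most $k$, so Lemma~\ref{lem:nodeuniversal} applied at $u$ yields a graph $H_u$ with $\abs{V(H_u)}\le N:=2^{2k+1}(2^{2k+2}-1)$ and a map $h_u$ from the leaves below $u$ to $V(H_u)$ recording cross-block adjacencies at $u$. Proceeding top-down, I would maintain for each node $u$ at depth $i$ a label in a fixed set of size at most $M_i$, where $M_i$ counts the ways a vertex can ``look'' from the root down to level $i$: the root contributes its $H_{v_0}$-label (at most $N$ choices), and each subsequent level multiplies the bookkeeping by the number of vertices of the relevant node graph, again at most $N$. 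Thus a leaf gets a composite label in a set of size at most $N^{r}$, and the adjacency of two leaves $x,y$, whose branching node in $T$ sits at some depth $i=r-\tfrac12\operatorname{dist}_T(x,y)$, is read off from the level-$i$ node graph using only the level-$i$ coordinates of the composite labels of $x$ and $y$ — which are themselves functions of the full labels. This is exactly the shrubbery condition: $\operatorname{dist}_T$ determines the splitting level, and the labels determine adjacency there.

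To get the stated bound $a=(1+o(1))2^{(N+1)r/2}$ rather than $N^r$, the final move is to compress: the composite label of a leaf can be viewed as a vertex of an iterated product of node graphs, but all we ever need is to evaluate ``adjacent or not'' queries across the various levels, so I would replace the explicit product by an $m$-loopy-universal graph with $m=\sum_{i} \abs{V(H_{u_i})}\le Nr$ — actually it suffices to take $m$ to be the maximum over the $r$ levels of the number of colours needed, bounded by $N$ per level and $Nr$ in total — and invoke the corollary to Alon's theorem, giving a universal host on at most $(1+o(1))2^{(Nr+1)/2}$ vertices; composing $h$ with the induced-subgraph embedding produces the desired $(a,r)$-shrubbery with $a=(1+o(1))2^{(N+1)r/2}$ after folding the factor $r$ into the exponent constant as the authors have written it. The main obstacle I expect is bookkeeping consistency across levels: one must check that the adjacency between two leaves really only depends on the node graph at their branching node and not on deeper data — this is where the ``clones relative to'' language of Lemma~\ref{lem:nodeuniversal} does the work, since once two vertices agree in their label at a node $u$ they are clones relative to everything outside the two relevant children-blocks, so all their cross-adjacencies at ancestors of $u$ coincide — and that the height-exactly-$r$ normalization does not inflate the width.
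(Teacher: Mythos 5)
Your overall plan coincides with the paper's proof up to the crucial last step: root the tree, subdivide so that all leaves sit at depth exactly $r$, apply Lemma~\ref{lem:nodeuniversal} at every internal node (with the partition given by the components of $T-v$, which has cut-rank at most $k$), and bring in the loopy-universal graph. The gap is in how the node graphs are assembled into a shrubbery. If each internal node $v$ keeps its own graph $H_v$ and its own labeling $h_v$, then the composite label taking values in a set of size $N^r$ (where $N=2^{2k+1}(2^{2k+2}-1)$) does \emph{not} satisfy the shrubbery condition: two pairs $(x_1,y_1)$ and $(x_2,y_2)$ with $\dist_T(x_1,y_1)=\dist_T(x_2,y_2)=2j$ may branch at two \emph{different} nodes of depth $r-j$, and then equal level-$(r-j)$ coordinates refer to vertices of two different graphs $H_{v_1}$ and $H_{v_2}$, so the two pairs can disagree on adjacency. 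Your ``main obstacle'' paragraph guards against dependence on ancestors/deeper data, which is not where the difficulty lies; the difficulty is uniformity across distinct nodes at the same depth. The loopy-universal graph in the paper exists precisely to repair this: every $H_v$ is embedded as an induced subgraph of one fixed $N$-loopy-universal graph $H$ (loops are needed because two vertices in distinct parts at $v$ can receive the same label yet be adjacent), each $f_v$ is taken with values in $V(H)$, and the label of a leaf is the $r$-tuple of its $f_{u_i}$-values along its root-to-leaf path. Then the rule ``test adjacency of the $j$-th coordinates in $H$'' is the same for every branching node of depth $r-j$, which is exactly what ``adjacency is determined by $f$ and $\dist_T$'' demands, and the label set $V(H)^r$ has size $(1+o(1))2^{(N+1)r/2}$.

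Your final ``compression'' step does not supply this mechanism. Its motivation is backwards: $N^r$ is vastly smaller than $2^{(N+1)r/2}$, so nothing needs compressing; the universal graph is there for consistency, not economy. Moreover, replacing ``the explicit product'' by a single $m$-loopy-universal host with $m\le Nr$ and giving each vertex one label in it cannot produce a shrubbery, since the label must retain one coordinate per level so that queries at every possible distance can be answered; and the arithmetic identifying $(1+o(1))2^{(Nr+1)/2}$ with $(1+o(1))2^{(N+1)r/2}$ is also incorrect. If what you intended was to keep the tuple and embed every node graph into a common $N$-loopy-universal host, that is the paper's proof --- but then the essential point to state is that all nodes at the same depth (in the paper, all nodes simultaneously) are mapped into the \emph{same} host, since that is what makes the adjacency rule independent of the branching node.
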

\begin{proof}
Let $G = (V,E)$. Let $(T,\sigma)$ be a $(k,r)$-decomposition of
the cut-rank function $\rho_G$ of $G$.
We choose an internal node $u \in V(T)$ which has distance at most $r$
to every node in $T$.
Now modify the tree $T$ by subdividing every leaf edge so that every leaf node has distance exactly $r$ to the node $u$.  Note that the resulting tree $T$ is still a $(k,r)$-decomposition of $\rho_G$.  

Let $\ell=2^{2k+1}(2^{2k+2}-1)$.
We apply the previous corollary to choose an $\ell$-loopy-universal
graph $H$ with vertex set $C$ with $\abs{C}\le (1+o(1))2^{(\ell+1)/2}$.  We will use this graph $H$ to define for each internal vertex $v$ of the tree $T$ a function $f_v : V \rightarrow C$.  First, let $\mathcal{P}_v$ be the partition of $V$ associated with the vertex $v$ and apply Lemma~\ref{lem:nodeuniversal} to choose a graph $H_v$.  Since the graph~$H$ is $\ell$-loopy-universal, the graph $H_v$ appears as an induced subgraph of~$H$.  Therefore, we may choose a function $f_v : V \rightarrow V(H)$ with the property that whenever $x,y$ are in distinct parts of $\mathcal{P}_v$ we have $xy \in E$ if and only if $f_v(x) f_v(y) \in E(H)$.  

Now we are ready to define our function $f : V \rightarrow C^r$.  For
every vertex $x \in V$ let $u_0, u_1, \ldots, u_r$ be the vertex
sequence of the unique path in $T$ from $u$ to $x$ (so $u_0 = u$ and
$u_r = x$).  We define \[f(x) = (f_{u_{r-1}}(x), f_{u_{r-2}}(x),
\ldots, f_{u_{1}}(x), f_{u_0}(x) ).\]  Let $x,y \in V$ and assume that
they have distance $2j>0$ in $T$ (note that any two leaf vertices are
at even distance apart).  Let $v \in V(T)$ be the unique vertex of $T$
which is distance $j$ from both $x$ and $y$.  Note that $x,y$ are in
distinct parts of $\mathcal{P}_v$ and furthermore, the function $f_v$
was used to give the $j$-th coordinate of both $f(x)$ and $f(y)$.
It follows from this that $x$ and $y$ are adjacent vertices of $G$ if
and only if the $j$-th coordinate of the vectors $f(x)$ and $f(y)$
are adjacent vertices of $H$.  So, adjacency in the graph $G$ is
entirely determined by the distance function in $T$ and the function
$f : V \rightarrow C^r$ and this yields a $( (1+o(1)) 2^{(\ell+1)r/2},r)$-shrubbery for $G$, as desired.
\end{proof}

Now we are ready to prove Theorem~\ref{thm:main-sd}.
\begin{THM}\label{thm:main-sd}
A class of simple graphs has bounded rank-depth if and only if it has bounded shrub-depth.
\end{THM}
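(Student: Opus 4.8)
The plan is to assemble Lemmas~\ref{lem:shrubbery2decomp} and~\ref{lem:dec-to-shrub}: each already gives one of the two implications, so the only remaining work is to reconcile the quantifier structures of the two notions of ``bounded'' and to dispose of the graphs with fewer than two vertices.

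For the ``only if'' direction, I would start from a class $\mathcal G$ of simple graphs with rank-depth at most $m$. Every $G\in\mathcal G$ with at least two vertices then admits an $(m,m)$-decomposition of its cut-rank function $\rho_G$, so Lemma~\ref{lem:dec-to-shrub} applied with $k=r=m$ produces an integer $a$, depending only on $m$ and not on $G$, for which $G$ has an $(a,m)$-shrubbery. A graph with at most one vertex admits an $(a,m)$-shrubbery trivially (or may simply be disregarded), so every graph of $\mathcal G$ has an $(a,m)$-shrubbery, and hence $\mathcal G$ has shrub-depth at most $m$.

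For the ``if'' direction, suppose $\mathcal G$ has shrub-depth at most $d$, so that some integer $k$ works uniformly: every $G\in\mathcal G$ admits a $(k,d)$-shrubbery. When $G$ has at least two vertices, Lemma~\ref{lem:shrubbery2decomp} yields a $(k,d)$-decomposition of $\rho_G$, which is in particular a $(\max(k,d),\max(k,d))$-decomposition, so $\rd(G)\le\max(k,d)$; graphs with fewer than two vertices have rank-depth $0$ by definition. Therefore $\mathcal G$ has rank-depth at most $\max(k,d)$.

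The only point that needs care --- and it is not really an obstacle --- is that the bound $a$ furnished by Lemma~\ref{lem:dec-to-shrub} is a function of $k$ and $r$ alone, so that a single $a$ suffices for the whole class; this is precisely how that lemma is phrased. All the genuine difficulty has already been absorbed into Lemma~\ref{lem:dec-to-shrub} (through Lemmas~\ref{lem:realization} and~\ref{lem:nodeuniversal} and Alon's universal-graph theorem), so at this stage the argument is pure bookkeeping.
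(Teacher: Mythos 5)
Your proposal is correct and matches the paper's proof, which likewise deduces Theorem~\ref{thm:main-sd} directly from Lemmas~\ref{lem:shrubbery2decomp} and~\ref{lem:dec-to-shrub}; you merely spell out the quantifier bookkeeping (the uniform bound $a=a(k,r)$ and the passage from a $(k,d)$-decomposition to a $(\max(k,d),\max(k,d))$-decomposition) that the paper leaves implicit. No gaps.
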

\begin{proof}%
This is an immediate consequence of Lemmas \ref{lem:shrubbery2decomp} and \ref{lem:dec-to-shrub}.
\end{proof}

Ganian, Hlin{\v{e}}n{\'y}, Ne{\v{s}}et{\v{r}}il, Obdr\v{z}\'{a}lek, and Ossona de Mendez~\cite{GHNOO2017} proved that a class of graphs of bounded shrub-depth is well-quasi-ordered by the induced subgraph relation.
This implies that for every $k$, there is a finite list of graphs such that a graph $G$ has rank-depth at most $k$ if and only if no graph in the list is isomorphic to an induced subgraph of $G$, because each minimal graph having rank-depth more than $k$ has rank-depth at most $k+1$. 
For each fixed $k$, since graphs of rank-depth at most $k$ have rank-width at most $k$, one can decide in cubic time whether an input graph has rank-depth at most $k$. For instance, it can be done by first finding a rank-decomposition of width at most $k$ by using an algorithm of Hlin\v{e}ny and Oum~\cite{HO2006} or Jeong, Kim, and Oum~\cite{JKO2018,JKO2019} and then using the theorem of Courcelle, Makowsky, and Rotics~\cite{CMR2000} to find such forbidden induced subgraphs, encoded in a monadic second-order formula. However, this approach does not provide a decomposition of width at most $k$ and we do not know such an algorithm.

Hlin\v{e}n\'y, Kwon,  Obdr\v{z}\'{a}lek, and Ordyniak~\cite{HKOO2016}
proposed a conjecture that every simple graph of sufficiently large rank-depth contains the path graph on $t$ vertices as a vertex-minor.
Very recently, Kwon, McCarty, Oum, and Wollan~\cite{KMOW2019} proved this conjecture after this paper was submitted.

\section{Depth parameters for matroids}\label{sec:matroid}
\subsection{A quick introduction to matroids}
A matroid $M=(E,\I)$ is a pair of a finite set $E$ and a set $\I$ of subsets of $E$ satisfying the following axioms:
\begin{enumerate}[({I}1)]
\item $\emptyset\in \I$,
\item If $X\in \I$ and $Y\subseteq X$, then $Y\in \I$.
\item If $X, Y\in \I$ and $\abs{X}<\abs{Y}$, then there exists $e\in Y\setminus X$ such that $X\cup \{e\}\in \I$. 
\end{enumerate}
Members of $\I$ are called \emph{independent}.
A subset $X$ of $E$ is \emph{dependent} if it is not independent.
A \emph{base} is a maximal independent set.
A \emph{circuit} is a minimal dependent set.
 Because of (I3), for every subset $X$ of $E$, all maximal independent subsets of $X$ have the same size, which is defined to be the \emph{rank}
of the set $X$, denoted by $r_M(X)$. We will omit the subscript if it
is clear from the context.
For a matroid $M=(E,\I)$, we write $E(M)$ to denote its \emph{ground set} $E$.

The rank function satisfies the \emph{submodular} inequality, namely $r(X)+r(Y)\ge r(X\cap Y)+r(X\cup Y)$ for all $X,Y\subseteq E$.
In fact, if a function $r:2^E\to \mathbb{Z}$
satisfies 
$r(\emptyset)=0$, $r(X)\le r(Y)$ for all $X\subseteq Y$
and the submodular inequality, 
then it defines a matroid $M=(E,\{X\subseteq E: r(X)=\abs{X}\})$.

The dual matroid $M^*$ of $M$ is defined as a matroid on $E$
whose set of bases is $\{ E\setminus B: \text{$B$ is a base of $M$}\}$.
A \emph{cocircuit} of $M$ is a circuit of $M^*$.
For a subset $T$ of $E$,
we define $M\setminus T$  be a matroid $(E\setminus T, \I')$
where $\I'=\{X\subseteq E\setminus T: X\in\I\}$.
This operation is called the \emph{deletion} of $T$ in $M$.
The \emph{contraction} of $T$ in $M$
is an operation to generate a matroid
$M/T=(M^*\setminus T)^*$.
If $T=\{e\}$, we write $M\setminus e$ for $M\setminus \{e\}$
and $M/e$ for $M/\{e\}$ for simplicity.
The \emph{restriction} of $M$ over $T$, denoted by $M|T$
is defined as $M|T=M\setminus (E\setminus T)$.
For more information on matroids, we refer readers to the book of Oxley~\cite{Oxley2011a}.

The \emph{connectivity function} $\lambda_M(X)$ is defined as
\[\lambda_M(X)=r_M(X)+r_M(E\setminus X)-r_M(E).\]
Then the connectivity function
of a matroid is symmetric ($\lambda_M(X)=\lambda_M(E\setminus X)$) and
submodular.
A matroid $M=(E,\I)$ is \emph{connected} if
$\lambda_M(X)>0$ for all non-empty proper subsets $X$ of $E$.
It is well known that $M$ and $M^*$ have the identical connectivity function.

The \emph{cycle matroid} $M(G)$ of a graph $G$
is the matroid on $E(G)$ such that a set of edges is independent if
it contains no cycles of $G$.
The \emph{bond matroid} $M^*(G)$ of a graph $G$ is the dual matroid of $M(G)$.

\subsection{Branch-depth, contraction depth, deletion depth, and
  con\-traction-deletion depth}
We define the \emph{branch-depth} of a matroid $M$ as the branch-depth of $\lambda_M$.
There are other ways to define depth parameters of matroids. In this section we will describe them and discuss their relations.

We define the \emph{contraction depth} $\cd(M)$, \emph{deletion depth}
$\dd(M)$, and \emph{contraction-deletion depth} $\cdd(M)$ of a matroid
$M$ as follows.

\begin{itemize}
\item If $E(M)=\emptyset$, then its contraction depth, deletion
  depth, and contraction-deletion depth are all defined to be $0$.
\item If $M$ is disconnected, then its
  contraction depth, deletion depth, and contraction-deletion depth is the
  maximum respective depth of its components.
\item If $M$ is connected and $E(M)\neq\emptyset$, then 
\begin{itemize}
  \item the contraction depth of $M$ is the minimum $k$ such that $M/e$ has contraction depth at most $k-1$ for some $e\in E(M)$,
  \item the deletion depth of $M$ is the minimum $k$ such that $M\setminus e$ has deletion depth at most $k-1$ for some $e\in E(M)$, and
  \item the contraction-deletion depth of $M$ is the minimum $k$ such that $M\setminus e$ or $M/e$ has contraction-deletion depth at most $k-1$ for some $e\in E(M)$.
\end{itemize}
\end{itemize}

Ding, Oporowski, and Oxley~\cite{DOO1995} investigated the contraction-deletion depth under the name \emph{type}.
Robertson and Seymour~\cite{RS1985} discussed similar concepts for graphs
under the names \emph{C-type} and \emph{D-type} for
contraction depth and deletion depth respectively.

As an easy corollary of Lemma~\ref{lem:conn}, we deduce the following lemma.
\begin{LEM}\label{lem:branchdepthunion}
  Let $M$ be a matroid.
  Let $k$ be the maximum branch-depth of the components of $M$.
  Then the branch-depth of $M$ is $k$ or $k+1$.
  In particular, if $M$ has at most one component having branch-depth
  exactly $k$,
  then the branch-depth of $M$ is equal to $k$.
\end{LEM}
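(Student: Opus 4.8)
The plan is to reduce this statement about matroids directly to Lemma~\ref{lem:conn}, which is the corresponding statement about connectivity functions that were expressed as disjoint unions. The key observation is that if $M$ is a matroid with components $M_1, M_2, \ldots, M_m$ on ground sets $E_1, E_2, \ldots, E_m$, then the connectivity function $\lambda_M$ is the disjoint union of the connectivity functions $\lambda_{M_1}, \ldots, \lambda_{M_m}$ in the sense defined after Lemma~\ref{lem:disjointunion}. First I would verify this, which amounts to checking that $\lambda_M(X) = \sum_{i=1}^m \lambda_{M_i}(X \cap E_i)$ for every $X \subseteq E(M)$; this follows from the fact that when $M$ is a direct sum of the $M_i$, the rank function is additive over the components, i.e. $r_M(X) = \sum_i r_{M_i}(X \cap E_i)$, and likewise $r_M(E) = \sum_i r_{M_i}(E_i)$, so the defining formula $\lambda_M(X) = r_M(X) + r_M(E \setminus X) - r_M(E)$ splits as a sum over $i$. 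In particular each $\lambda_{M_i}$ equals $\lambda_M|_{E_i}$ in the notation of Lemma~\ref{lem:disjointunion}, since $\lambda_M(E_i) = 0$ because $r_M(E_i) + r_M(E \setminus E_i) = r_M(E)$ by additivity.

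Once this identification is in place, I would apply Lemma~\ref{lem:conn} with the partition $E_1, \ldots, E_m$ of $E(M)$, noting that if a component $M_i$ has empty ground set it can simply be discarded (or the statement is vacuous when $E(M) = \emptyset$, in which case the branch-depth is $0$ by convention and there is nothing to prove). Writing $k_i$ for the branch-depth of $\lambda_{M_i}$, which is by definition the branch-depth of the component $M_i$, and $k = \max_i k_i$, Lemma~\ref{lem:conn} tells us that the branch-depth of $\lambda_M$ — which is by definition the branch-depth of $M$ — is either $k$ or $k+1$, and that if it is $k+1$ then there exist $i < j$ with $k_i = k_j = k$. The contrapositive of this last clause is exactly the ``in particular'' statement: if $M$ has at most one component of branch-depth exactly $k$, then it cannot be the case that two distinct components attain branch-depth $k$, so the branch-depth of $M$ must be $k$ rather than $k+1$.

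I do not anticipate a serious obstacle here; the only thing requiring a moment of care is the bookkeeping around components with one-element or empty ground sets, and making sure the notion of ``branch-depth of a component'' (defined via $\lambda_{M_i}$ on the ground set $E_i$) matches the restricted connectivity function $\lambda_M|_{E_i}$ appearing in Lemma~\ref{lem:conn}. Since both the matroid connectivity function and its behavior under direct sums are entirely standard, and Lemma~\ref{lem:conn} has already done the real combinatorial work on decomposition trees, the proof is genuinely just an invocation of that lemma after translating the setup. I would therefore keep the write-up to a couple of sentences: observe that $\lambda_M$ is the disjoint union of the $\lambda_{M_i}$, discard empty components, and cite Lemma~\ref{lem:conn}.
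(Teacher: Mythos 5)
Your proposal is correct and follows exactly the route the paper takes: the paper presents this lemma as an immediate corollary of Lemma~\ref{lem:conn}, using that $\lambda_M$ splits additively over the ground sets of the components (each of which has $\lambda_M(E_i)=0$). Your verification of the additivity of the rank function under direct sums and your use of the contrapositive of the final clause of Lemma~\ref{lem:conn} for the ``in particular'' statement are precisely the intended argument.
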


From the definition, the following inequalities can be obtained. 
\begin{THM}\label{thm:various-depth}
  For all matroids $M$, the following hold.
  \begin{enumerate}[(1)]
  \item
    The branch-depth of $M$ is less than or equal to $\cdd(M)$.
  \item $    \cdd(M)\le \min(\cd(M),\dd(M))$.
  \end{enumerate}
\end{THM}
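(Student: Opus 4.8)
The plan is to prove the two inequalities essentially by induction on the size of the ground set, reducing to the case of connected matroids and then to the behaviour of branch-depth under a single deletion or contraction.

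For part (2), the statement $\cdd(M)\le\min(\cd(M),\dd(M))$ is almost immediate from the definitions. Indeed, every valid sequence of single-element contractions witnessing $\cd(M)\le k$ is in particular a valid sequence of single-element deletions-or-contractions, so it witnesses $\cdd(M)\le k$; the same argument with deletions gives $\cdd(M)\le\dd(M)$. More carefully, one proves by induction on $\abs{E(M)}$ that $\cdd(M)\le\cd(M)$: if $E(M)=\emptyset$ both are $0$; if $M$ is disconnected, apply the induction hypothesis to each component and take the maximum; and if $M$ is connected with nonempty ground set, pick $e$ with $\cd(M/e)\le\cd(M)-1$, apply induction to $M/e$ to get $\cdd(M/e)\le\cd(M/e)\le\cd(M)-1$, and conclude $\cdd(M)\le\cd(M)$ from the recursive definition of $\cdd$. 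The inequality $\cdd(M)\le\dd(M)$ is symmetric. So part (2) is routine.

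For part (1), I would again induct on $\abs{E(M)}$. If $E(M)=\emptyset$, then $M$ has fewer than two elements, so its branch-depth is $0=\cdd(M)$. If $M$ is disconnected, then by Lemma~\ref{lem:branchdepthunion} the branch-depth of $M$ is at most $k+1$, where $k$ is the maximum branch-depth of its components; meanwhile $\cdd(M)$ is the maximum of $\cdd$ over components, and one needs to check the branch-depth bound still goes through --- here I expect to need a slightly more refined inductive statement, for instance that $M$ has a $(\cdd(M),\cdd(M))$-decomposition together with control on the radius, so that the ``$k$ or $k+1$'' dichotomy of Lemma~\ref{lem:conn}/Lemma~\ref{lem:branchdepthunion} does not cost us anything. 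The core case is $M$ connected with $\abs{E(M)}\ge 2$: pick $e\in E(M)$ with (say) $\cdd(M\setminus e)\le\cdd(M)-1$, let $k=\cdd(M)$, and by induction $M\setminus e$ has a $(k-1,k-1)$-decomposition $(T',\sigma')$. The task is to extend $(T',\sigma')$ to a decomposition of $\lambda_M$ of width at most $k$ and radius at most $k$ by inserting a new leaf for $e$. Attaching the leaf $\sigma(e)$ to a node $v$ of $T'$ at distance $1$ from the center changes each partition $\mathcal P_u$ by possibly moving $e$ between parts or creating a singleton part $\{e\}$; since $\lambda_M$ differs from $\lambda_{M\setminus e}$ by at most $1$ on any set (adding or removing one element changes rank by at most $1$, hence $\lambda_M$ by at most $1$ after the restriction lemma bookkeeping), the width goes up by at most $1$, giving width at most $k$, and one checks the radius stays at most $k$. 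The case where the witnessing operation is a contraction $M/e$ is handled identically using $\lambda_{M/e}(X)=\lambda_M(X)$ or $\lambda_M(X)\pm 1$ and that $\lambda_M=\lambda_{M^*}$.

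The main obstacle I anticipate is the disconnected case: reconciling the ``$+1$'' that can appear both in Lemma~\ref{lem:branchdepthunion} (branch-depth of a union) and in the recursive definition of $\cdd$ for disconnected matroids, so that the two potential increments do not stack. The clean fix is to strengthen the inductive hypothesis to assert existence of a $(\cdd(M),\cdd(M))$-decomposition of $\lambda_M$ (not merely branch-depth $\le\cdd(M)$) and then invoke Lemma~\ref{lem:connsub} directly to glue component decompositions while keeping the radius in check; one must be a little careful when all components have equal branch-depth, but this is exactly the situation Lemma~\ref{lem:connsub}(ii) and the ``in particular'' clause of Lemma~\ref{lem:conn} are designed to handle, and the bound $\cdd$ for a disconnected matroid being the \emph{maximum} over components (rather than that maximum plus one) leaves exactly the slack needed.
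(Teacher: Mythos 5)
Part (2) of your proposal is fine and is essentially the paper's one-line observation. For part (1), however, there is a genuine gap, and it sits exactly at the point you flag as the main obstacle. Your proposed fix --- strengthening the induction to ``$M$ has a $(\cdd(M),\cdd(M))$-decomposition'' --- is just a restatement of the inequality to be proved, and it does not survive the disconnected case: if two or more components attain the maximum $\cdd(M)=k$, induction gives each of them only a $(k,k)$-decomposition, and gluing them via Lemma~\ref{lem:connsub}(ii) (or the ``in particular'' clause of Lemma~\ref{lem:conn}) produces radius $k+1$, i.e., only a $(k,k+1)$-decomposition of $M$, which does not certify $\bd(M)\le k=\cdd(M)$. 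Your closing remark that $\cdd$ being the \emph{maximum} over components ``leaves exactly the slack needed'' is backwards: precisely because $\cdd$ does not increase under disjoint union while branch-depth can increase by one, there is no slack at this step, and the slack must be created in the connected case.

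This is what the paper does: it proves the strictly sharper claim that every \emph{connected} matroid with at least two elements and $\cdd(M)\le k$ has branch-depth at most $k-1$. By duality one may assume the witnessing operation is a deletion; each component of $M\setminus e$ has branch-depth at most $k-2$ by induction, so Lemma~\ref{lem:conn} gives a $(k-2,k-1)$-decomposition of $M\setminus e$, and attaching a leaf for $e$ at the center costs at most $1$ in width, yielding a $(k-1,k-1)$-decomposition of $M$. Then a single final application of Lemma~\ref{lem:conn} to the components of an arbitrary $M$ absorbs the possible $+1$ and gives $\bd(M)\le\cdd(M)$. Your connected-case construction is very close to supplying this slack: if you attach the new leaf to the center itself (rather than at distance $1$ from it), you obtain radius at most $k-1$, so the argument can be repaired by restructuring the induction around connected matroids with the stronger statement ``width at most $k$ (or, as in the paper, $k-1$) and radius at most $k-1$,'' deducing the disconnected case from it rather than including disconnected matroids in the inductive statement. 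As written, though, your inductive statement is too weak and the induction does not close.
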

\begin{proof}
  Trivially $\cdd(M)\le \cd(M)$ and $\cdd(M)\le \dd(M)$. 
  For (1), we may assume that $\abs{E(M)}\ge 2$.

  We claim that if $M$ is connected, $\abs{E(M)}\ge 2$,  and $M$ has contraction-deletion depth at most $k$,
  then the branch-depth of $M$ is at most $k-1$.
  This claim, if true, implies (1) by Lemma~\ref{lem:conn}.

  To prove the claim, we proceed by induction on $\abs{E(M)}$.
  Note that since $M$ is connected and has at least two elements,
  $k\ge \cdd(M)\ge 2$.
  The statement holds trivially if $\abs{E(M)}=2$. So we may assume that $\abs{E(M)}\ge 3$.
  There exists $e\in E(M)$ such that $\cdd(M\setminus e)\le k-1$
  or $\cdd(M/e)\le k-1$.  By duality, we may assume that $\cdd(M\setminus e)\le k-1$.

  We now want to show that $M\setminus e$ has a  $(k-2,k-1)$-decomposition.
  Each component of $M\setminus e$ has contraction-deletion depth at most $k-1$
  and therefore if a component $C$ of $M\setminus e$ has at least two elements, then by the induction hypothesis, $C$ has branch-depth at most $k-2$.
  If a component  $C$ of $M\setminus e$ has only $1$ element, that is a loop or a coloop,
  then $C$ has branch-depth $0$ by definition.
  Thus all components of $M\setminus e$ have branch-depth at most $k-2$
  and by Lemma~\ref{lem:conn}, $M\setminus e$ has a $(k-2,k-1)$-decomposition.

  Let $(T,\sigma)$ be a $(k-2,k-1)$-decomposition $(T,\sigma)$ of $M\setminus e$.
  Let $v$ be an internal node of $T$ such that
  each node of $T$ is within distance $k-1$ from $v$. 
  
  Let  $T'$ be a tree obtained from $T$ by attaching a leaf $w$ to $v$
  and let $\sigma'$ be a bijection from $E=E(M)$ to the set of leaves of
  $T$,
  that extends $\sigma$
  so that $\sigma'(w)=e$.
  Then $(T',\sigma')$ is a $(k-1,k-1)$-decomposition of $M$.
  This completes the proof of the claim.
\end{proof}

Theorem~\ref{thm:various-depth} implies the following hierarchy
for a class $\mathcal M$ of matroids, depicted in Figure~\ref{fig:matroid}.
\begin{itemize}
\item 
  If $\M$ has bounded contraction-deletion depth,
  then it has bounded branch-depth.
\item If $\M$ has bounded contraction depth, then
  it has bounded contraction-deletion depth.
\item If $\M$ has bounded deletion depth, then
  it has bounded contraction-deletion depth.
\end{itemize}

Here are examples disproving the converse of each of these.
We omit the proof.

\begin{itemize}
\item
  An \emph{$(m,n)$-multicycle} $C_{m,n}$ is the graph obtained
  from the cycle graph $C_m$ by replacing each edge with $n$ parallel edges.
  Then $M(C_{n,n})$ has branch-depth  $2$
  and yet its contraction-deletion depth is $n$, shown by
  Dittmann and Oporowski~\cite[Lemma 2.4]{DO2002}.
  By duality, $M^*(C_{n,n})$ has deletion-depth $n$ and branch-depth $2$.
  
\item The matroid $U_{n-1,n}=M(C_n)$ has deletion depth $2$, contraction-deletion depth $2$
  and contraction depth $n+1$.
  Its dual $U_{1,n}=M^*(C_n)$ has contraction depth $2$, contraction-deletion depth $2$
  and deletion depth $n+1$.
\end{itemize}

\subsection{Relation to matroid minors}

The branch-depth of matroids cannot increase by taking a minor.
A matroid $N$ is a \emph{minor} of a matroid $M$ if $N=M\setminus X/Y$ for some disjoint subsets $X$, $Y$ of $E(M)$.
\begin{PROP}\label{prop:matroidminor}
  If $N$ is a minor of a matroid $M$, then
  the branch-depth of $N$ is less than or equal to the branch-depth of $M$.
\end{PROP}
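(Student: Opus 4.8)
The plan is to reduce the statement to the case of a single deletion and a single contraction, since an arbitrary minor $N = M\setminus X/Y$ is obtained by a sequence of such one-element operations. So it suffices to show that $\bd(M\setminus e)\le \bd(M)$ and $\bd(M/e)\le \bd(M)$ for every $e\in E(M)$. By self-duality of the connectivity function ($\lambda_M = \lambda_{M^*}$), contraction in $M$ is deletion in $M^*$, and the branch-depth depends only on $\lambda_M$; hence it is enough to treat deletion, and the claim for contraction follows by applying the deletion case to $M^*$.

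For the deletion step, let $N = M\setminus e$ and set $E = E(M)$, $E' = E\setminus\{e\}$. The key observation is that $\lambda_N$ is essentially the restriction of $\lambda_M$ to subsets of $E'$, up to a bounded additive error: in general $\lambda_{M\setminus e}(X) \le \lambda_M(X)$ for all $X\subseteq E'$, because deleting an element cannot increase the connectivity of a separation. More precisely, using the rank-function definition $\lambda_M(X) = r_M(X) + r_M(E\setminus X) - r_M(E)$, one checks $r_{M\setminus e}(X) = r_M(X)$ and $r_{M\setminus e}(E'\setminus X) = r_M(E'\setminus X) \le r_M((E'\setminus X)\cup\{e\})$, while $r_{M\setminus e}(E') = r_M(E')\le r_M(E)\le r_M(E')+1$; combining these gives $\lambda_{M\setminus e}(X)\le \lambda_M(X)$. (I should double-check the exact bookkeeping, but the monotonicity $\lambda_{M\setminus e}\le \lambda_M$ on $2^{E'}$ is the crucial point and is standard for matroid connectivity.)

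Given this, the plan is: take a $(k,k)$-decomposition $(T,\sigma)$ of $\lambda_M$ where $k = \bd(M)$. Delete the leaf $\sigma(e)$ from $T$, suppressing the resulting degree-two node if necessary, and restrict $\sigma$ to $E'$; call the result $(T',\sigma')$. Then $T'$ is a tree whose leaves are in bijection with $E'$, its radius is at most the radius of $T$, hence at most $k$, and it still has an internal node provided $\abs{E'}\ge 2$. For each internal node $v$ of $T'$ and each sub-collection $\mathcal P'$ of the induced partition $\mathcal P_v$ of $E'$, the set $\bigcup_{X\in\mathcal P'}X$ is a subset of $E'$ and equals either $A$ or $A\setminus\{e\}$ for the corresponding union $A$ in a partition of $E$ induced by $T$ (depending on how leaf-deletion and node-suppression interact); in either case $\lambda_{M\setminus e}$ of it is at most $\lambda_M$ of the corresponding set by the monotonicity above, hence at most $k$. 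Therefore $(T',\sigma')$ is a $(k,k)$-decomposition of $\lambda_{M\setminus e}$, so $\bd(M\setminus e)\le k$. The degenerate cases $\abs{E'}<2$ give $\bd(M\setminus e)=0\le k$ trivially.

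The main obstacle is the mild combinatorial care needed when the node adjacent to $\sigma(e)$ becomes degree two after deleting the leaf: one must argue that suppressing it does not create a partition whose cut value exceeds $k$, and that a valid decomposition (with at least one internal node) still exists. This is handled by noting that suppression only merges two parts of some $\mathcal P_v$ into one and deletes others, so every union arising over the smaller tree already arose (minus possibly $e$) over the original tree; and if suppression would leave no internal node, then $\abs{E'}\le 1$ and branch-depth is $0$. Once the deletion case is secured, contraction is immediate by duality, and the general minor statement follows by composing the two inequalities along the sequence of operations realizing $N=M\setminus X/Y$.
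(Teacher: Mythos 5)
Your overall route---prune an optimal decomposition of $M$ to the surviving elements and use the fact that the connectivity function cannot increase when passing to a minor---is essentially the paper's proof. The paper just does it in one shot: it takes the minimal subtree $T'$ of $T$ containing the leaves labelled by $E(N)$ and invokes the standard inequality $\lambda_N(X\cap E(N))\le\lambda_M(X)$ for minors (Oxley, Corollary 8.2.5), rather than factoring the minor into single-element deletions and dualizing for contractions (your duality step is fine, since $\lambda_M=\lambda_{M^*}$ gives $\bd(M)=\bd(M^*)$).

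Two points in your write-up need repair. First, the pruning step is incomplete: after deleting the leaf $\sigma(e)$, its neighbour need not become a degree-two node---if it had degree two in $T$ (which the definition of a decomposition allows), it becomes a leaf with no element assigned, so the resulting pair is not a decomposition, and this can cascade through several unlabelled leaves; your fallback ``if suppression leaves no internal node then $\abs{E'}\le 1$'' does not cover this. The clean fix is exactly the paper's: take the minimal subtree of $T$ containing the labelled leaves (equivalently, repeatedly delete unlabelled leaves), after which your width argument goes through verbatim, because each union of parts at an internal node of the subtree is the trace on $E(N)$ of a union of parts of the corresponding partition of $E(M)$; and if $\abs{E(N)}\ge 2$, the path in $T$ between two labelled leaves passes through an internal node of $T$, so the pruned tree still has one. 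Second, the inequalities you list for the deletion case do not by themselves yield $\lambda_{M\setminus e}(X)\le\lambda_M(X)$---combined naively they only give slack $1$; you need $r_M(E\setminus X)-r_M((E\setminus X)\setminus\{e\})\ge r_M(E)-r_M(E\setminus\{e\})$, which follows from submodularity (or closure monotonicity). Simpler still, cite the minor inequality above once, which also removes the need for the deletion/contraction split altogether.
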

\begin{proof}
  We may assume that $N$ has at least two elements and $\abs{E(M)}>\abs{E(N)}$.
  Let $k$ be the branch-depth of $M$.
  Let $(T,\sigma)$ be a $(k,k)$-\decomposition{} of $M$.
  Let $T'$ be a minimal subtree of $T$ containing all leaves corresponding to $E(N)$ by $\sigma$.
  Let $\sigma'$ be the restriction of $\sigma$ on the set of leaves of $T'$.
  As $\abs{E(M)}>\abs{E(N)}\ge 2$,
  $T'$ must have at least one internal node.
  Clearly the radius of $T'$ is at most $k$.

  It is well known that 
  for all $X\subseteq E(M)$,
  $\lambda_N(X\cap E(N))\le \lambda_M(X)$ (see Oxley~\cite[Corollary 8.2.5]{Oxley2011a}).
  This implies that the width of $(T',\sigma')$ is at most $k$.
  Thus, $(T',\sigma')$ is a $(k,k)$-decomposition of $N$.
\end{proof}
Proposition~\ref{prop:matroidminor} allows us to deduce the following algorithm. 
\begin{COR}
  For each fixed finite field $\mathbb F$ and an integer $k$, we can decide in time $O(n^3)$ whether the input $n$-element rank-$r$ matroid represented by an $r\times n$ matrix over $\mathbb F$ has branch-depth at most $k$. 
\end{COR}
\begin{proof}
  Trivially if the branch-depth of a matroid is at most $k$, then 
  the branch-width of a matroid is at most $k$.
  Also, Geelen, Gerards, and Whittle~\cite{GGW2002} proved that matroids over $\mathbb F$ of bounded branch-width are well-quasi-ordered under the minor relation and therefore there is a finite list of matroids such that a matroid $M$ representable over $\mathbb F$ has branch-depth at most~$k$ if and only if no matroid in the list is isomorphic to a minor of~$M$.
  So we can use the algorithm of Hlin\v{e}n\'y and Oum~\cite{HO2006} or Jeong, Kim, and Oum~\cite{JKO2018,JKO2019} to find a branch-decomposition of width at most~$k$ if it exists in time $O(n^3)$. 
  If there is no such branch-decomposition, then the branch-depth is larger than $k$.
  If we have a branch-decomposition of width at most~$k$, then we use the algorithm of Hlin\v{e}n\'y~\cite{Hlineny2004} to decide whether the input matroid represented by a matrix over $\mathbb F$ has a minor isomorphic to a fixed matroid. 
\end{proof}

However, we will show that deletion depth, contraction depth,
and con\-trac\-tion-deletion depth may increase
by taking a minor.
First we will present an example showing
that the deletion depth of a minor of $M$
is not necessarily less than or equal to the deletion depth of $M$.
By duality, this implies that 
the contraction depth of a minor of $M$
is not necessarily less than or equal to the contraction depth of $M$.

Let $K_3^+$ be the graph in Figure~\ref{fig:dd}
and let $e$ be the edge of $K_3^+$ shown in Figure~\ref{fig:dd}.
Then $M(K_3^+\setminus e)$ has two components, each having two elements.
Thus, $\dd(M(K_3^+))\le 3$.
It is easy to check that $\dd(M(K_3^+))\ge 3$
and therefore $\dd(M(K_3^+))=3$.
However, $\dd(M(K_3^+)/e)=4$ because there is no way to break it into more than one component by deleting at most three edges.

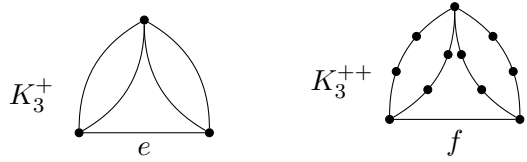
\begin{figure}
  \centering
 \tikzstyle{v}=[circle, draw, solid, fill=black, inner sep=0pt, minimum width=3pt]
  
 \begin{tikzpicture}[scale=1]
   \node at (-1.5,0) {$K_3^+$};
   \foreach \x in {0,1,2}{
     \node[v] (v\x) at (120*\x+90:1) {};
   }
   \foreach \y in {1,2}{
     \draw (v0) to [bend right] (v\y);
     \draw (v0) to [bend left] (v\y);
   }
   \draw (v1) --(v2)  node [pos=0.5,below]{$e$};
 \end{tikzpicture}\quad\quad\quad
 \begin{tikzpicture}[scale=1]
   \node at (-1.5,0) {$K_3^{++}$};
   \foreach \x in {0,1,2}{
     \node[v] (v\x) at (120*\x+90:1) {};
   }
   \foreach \y in {1,2}{
     \draw (v0) to [bend right]    node [pos=0.3333,v] {} node[pos=0.6667,v]{} (v\y);
     \draw (v0) to [bend left]    node [pos=0.3333,v] {} node[pos=0.6667,v]{} (v\y);
   }
   \draw (v1) --(v2)  node [pos=0.5,below]{$f$};
 \end{tikzpicture}
  \caption{Graphs $K_3^+$ and $K_3^{++}$.}
  \label{fig:dd}
\end{figure}

Now let us present an example for the contraction-deletion depth.
Let $K_3^{++}$ be the graph in Figure~\ref{fig:dd} and let $f$ be the edge of $K_3^{++}$.
Then $M(K_3^{++}\setminus f)$ has two components, each having the contraction-deletion depth~$2$
and therefore the contraction-deletion depth of $M(K_3^{++})$ is~$3$.
However, in $M(K_3^{++}/f)$,
no matter how we delete or contract any two elements,
there will be a circuit of size $6$ and therefore the contraction-deletion depth of $M(K_{3}^{++}/f)$ is at least $4$. Indeed, it is easy to check that the contraction-deletion depth of $M(K_3^{++}/f)$ is exactly $4$.

\subsection{Large circuits and contraction depth}
Let us first discuss obstructions for small contraction depth.
Here is a theorem due to Seymour, see~\cite{DOO1995}.
\begin{THM}[Seymour~(see \cite{DOO1995})]\label{thm:seymour}
  If $C$ is a longest circuit in a connected matroid $M$, then $M/C$ has no circuits of size at least $\abs{C}$.
\end{THM}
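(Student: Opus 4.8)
The plan is to prove Theorem~\ref{thm:seymour} by a direct circuit-exchange argument. Suppose $C$ is a longest circuit of a connected matroid $M$, and suppose for contradiction that $M/C$ contains a circuit $D$ with $\abs{D}\ge \abs{C}$; we may choose $D$ to be a circuit of $M/C$ of minimum size among those of size at least $\abs{C}$. Note $D\cap C=\emptyset$ since $D$ is a subset of the ground set of $M/C$. In $M$ itself, $D$ need not be independent, but $\operatorname{cl}_M(D)\supseteq D$ and the rank of $D$ in $M$ is $\abs{D}-1$ or $\abs{D}$; the key point is that $r_M(C\cup D) = r_M(C) + r_{M/C}(D) = (\abs{C}-1) + (\abs{D}-1)$, so $C\cup D$ is far from independent and contains many circuits. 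The aim is to locate inside $C\cup D$ a circuit longer than $C$, contradicting maximality.

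First I would set up the exchange mechanism. Since $D$ is dependent in $M/C$, for each element $d\in D$ the set $D\setminus d$ spans $d$ in $M/C$, which means there is a subset $C_d\subseteq C$ with $d\in \operatorname{cl}_M((D\setminus d)\cup C_d)$; more usefully, by circuit axioms applied in $M$, for each $d\in D$ there is a circuit $Z_d$ of $M$ with $d\in Z_d\subseteq \{d\}\cup (D\setminus d)\cup C$ — indeed $D$ together with $C$ carries a circuit through each of its elements. The heart of the proof is a counting/connectivity argument: using that $M$ is connected (so $\lambda_M$ is strictly positive on proper nonempty subsets) and that $\abs{D}\ge\abs{C}$, one shows that some circuit $Z$ contained in $C\cup D$ must use strictly more than $\abs{C}$ elements. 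A clean way to do this is to consider the restriction $N = M|(C\cup D)$ and observe $r(N) = \abs{C}+\abs{D}-2$ while $\abs{E(N)}=\abs{C}+\abs{D}$, so $N$ has nullity $2$; a matroid of nullity $2$ on a set where one "side" $C$ is already a circuit of size $\abs{C}$ and the other side contributes $\abs{D}\ge\abs{C}$ new elements forces, via an exchange between $C$ and $D$, a circuit meeting both and of size $> \abs{C}$.

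More concretely, I would argue as follows. Pick $d\in D$ and a circuit $Z\subseteq \{d\}\cup(D\setminus d)\cup C$ of $M$ with $d\in Z$ and $Z\cap D\ne\emptyset$; among all circuits of $M$ contained in $C\cup D$ that meet $D$, choose one, call it $Z$, maximizing $\abs{Z}$. If $\abs{Z}>\abs{C}$ we are done, so assume $\abs{Z}\le\abs{C}$. Then $Z\cap D$ is a proper nonempty subset of $D$ (it cannot be all of $D$ since $D$ has size $\ge\abs{C}\ge\abs{Z}$ with equality forcing $Z\supseteq D$, handled separately, and it is nonempty by choice). Now I use connectivity: because $M$ is connected, there is an element $c\in C\setminus Z$ and a circuit-exchange producing a larger circuit — specifically I take $e\in Z\cap D$ and $c\in C\setminus Z$, and since $C$ is a circuit, $C\cup e$ contains a circuit through $e$ and through all but one element of $C$; eliminating between this circuit and $Z$ (strong circuit elimination) over $e$ yields a circuit $Z'\subseteq (Z\cup C)\setminus e$ containing a large chunk of $C$ together with $Z\cap (D\setminus e)$, and a careful bookkeeping of which elements survive shows $\abs{Z'}\ge \abs{C}-1 + \abs{Z\cap D} > \abs{C}$ whenever $\abs{Z\cap D}\ge 2$, while the case $\abs{Z\cap D}=1$ is disposed of by noting $Z\setminus D\subseteq C$ is then a circuit missing exactly one element of $C$, i.e. $\abs{Z}=\abs{C}$, and replacing the single $D$-element by a second one via connectivity.

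The main obstacle I anticipate is making the circuit-elimination bookkeeping airtight: strong circuit elimination only guarantees a circuit avoiding one chosen element and containing one other chosen element, so one must iterate it or invoke a stronger multi-element exchange lemma, and one must verify that the resulting circuit really does retain enough of $C$ to exceed $\abs{C}$ in total size rather than merely trading elements one-for-one. Getting the inequality $\abs{Z'}>\abs{C}$ to come out strictly, using the hypothesis $\abs{D}\ge\abs{C}$ in exactly the right place, is the delicate step; everything else (the rank computation $r_M(C\cup D)=\abs{C}+\abs{D}-2$, the reduction to the restriction $N$, the use of connectivity to ensure $C$ and $D$ genuinely interact) is routine. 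It may be cleaner to phrase the whole argument in terms of $M/C$ and lift, or to use the characterization of connectivity via the absence of a separation, but the circuit-exchange route above is the most self-contained.
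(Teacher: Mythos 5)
There is a genuine gap, and it sits exactly where you wrote ``handled separately.'' Note first that the paper does not prove this statement at all: it quotes it from Seymour via Ding, Oporowski, and Oxley, so your argument would have to be complete on its own. Split the problem as the circuit structure of $M/C$ dictates: since $D$ is a circuit of $M/C$, there is a circuit $Z$ of $M$ with $Z\setminus C=D$. If $Z\cap C\neq\emptyset$ then $\abs{Z}>\abs{D}\ge\abs{C}$ and you are done immediately, without connectivity and without any exchange machinery. Otherwise $D$ itself is a circuit of $M$, forcing $\abs{D}=\abs{C}$, and your own rank computation $r_M(C\cup D)=\abs{C}+\abs{D}-2=r_M(C)+r_M(D)$ shows that $C$ and $D$ are skew; hence $M|(C\cup D)=(M|C)\oplus(M|D)$. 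In this remaining case the restriction to $C\cup D$ contains no circuit other than $C$ and $D$, in particular no circuit meeting both sides and none of size exceeding $\abs{C}$. So the plan of ``locating inside $C\cup D$ a circuit longer than $C$'' cannot succeed precisely in the only case that matters: connectivity of $M$ does not make $M|(C\cup D)$ connected, and the nullity-$2$ observation buys nothing beyond the skewness that blocks you. Concretely, your exchange step already fails at ``since $C$ is a circuit, $C\cup e$ contains a circuit through $e$'': for $e\in D$ with $C,D$ skew we have $e\notin\operatorname{cl}_M(C)$, so the unique circuit in $C\cup\{e\}$ is $C$ and does not contain $e$.

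What remains to be proved is exactly the nontrivial assertion: a connected matroid cannot contain two disjoint, mutually skew circuits both of maximum length. Any correct proof must leave $C\cup D$ and exploit connectivity through elements outside it --- for instance by considering circuits that meet both $C$ and $D$ (these exist by connectivity but necessarily use elements of $E(M)\setminus(C\cup D)$), chosen to minimize or maximize a suitable quantity, and then performing a delicate sequence of circuit exchanges; this is the substance of Seymour's argument reproduced in the Ding--Oporowski--Oxley paper. As written, your proposal establishes only the easy half and defers the crux to an unexecuted ``separate case,'' so it does not constitute a proof of the theorem.
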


\begin{LEM}\label{lem:circuithalf}
  Let $M$ be a matroid and $e\in E(M)$.
  If $C$ is a circuit of a matroid $M$ with $\abs{C}\ge 2$, then $M/e$ has a circuit of size at least $\abs{C}/2$.
\end{LEM}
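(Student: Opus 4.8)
The statement to prove is: if $C$ is a circuit of a matroid $M$ with $\abs{C}\ge 2$ and $e\in E(M)$, then $M/e$ has a circuit of size at least $\abs{C}/2$. The plan is to analyze the behavior of the circuit $C$ under contraction of the single element $e$, splitting into the two natural cases according to whether $e\in C$ or $e\notin C$.

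First I would dispose of the easy case $e\notin C$. Here $C$ remains a dependent set in $M/e$ (contraction does not destroy dependence of sets avoiding the contracted element: $r_{M/e}(C)=r_M(C\cup e)-r_M(e)\le r_M(C)+1-r_M(e)$, and one checks this is $<\abs C$ unless $e$ is a coloop, in which case $r_M(e)=1$ and $r_{M/e}(C)=r_M(C)<\abs C$ anyway; in all cases $C$ stays dependent). So $M/e$ contains a circuit inside $C$, but that circuit could in principle be smaller than $\abs C$. The cleaner line is: $C$ is a circuit of $M$, hence $C$ has rank $\abs C -1$ in $M$; in $M/e$ its rank drops by at most one more than... — actually the robust statement is that a circuit of $M$ not containing $e$ is either a circuit of $M/e$ or the disjoint union of... no. Let me instead use the standard fact: if $C$ is a circuit of $M$ and $e\notin C$, then $C$ contains a circuit of $M/e$; moreover $C$ minus that circuit is independent in $M/e$, and since $C$ has corank $1$ in $M$ and contraction reduces rank, $C$ spans a set of rank at least $\abs C - 1$ in $M/e$ unless $e\in \overline{C}$... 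The simplest correct argument: $r_{M/e}(C)\ge r_M(C)-1 = \abs C - 2$, so any circuit of $M/e$ contained in $C$ has size at least $\abs C -1 \ge \abs C/2$ (using $\abs C\ge 2$), because removing a circuit of $M/e$ from $C$ leaves an independent set, which has size $\le r_{M/e}(C)\le \abs C - 1$, so the circuit has size $\ge \abs C - (\abs C -1) = 1$; to get the factor $1/2$ I need the sharper count that the largest circuit in $C$ within $M/e$ has size $\ge r_{M/e}(C)+1 \ge \abs C -1$. I would verify this via: $C$ is dependent in $M/e$, so it contains a circuit $C'$; and $C\setminus C'$ is independent in $M/e$ and also $C'$ has corank $1$ when $C$ itself... — here the fact that $C$ is a \emph{single} circuit of $M$ (so every proper subset is independent in $M$, hence every subset of $C$ of size $\le \abs C -1$ is independent in $M/e$ too if rank didn't drop) forces $\abs{C'}\ge \abs C -1$.

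For the case $e\in C$: then $C\setminus e$ is independent in $M$ (proper subset of a circuit) but dependent in $M/e$? No — $C\setminus e$ is exactly the standard description of a circuit of $M/e$: it is a well-known fact (Oxley) that if $C$ is a circuit of $M$ containing $e$, then $C\setminus e$ is a \emph{circuit} of $M/e$ — wait, that is true only when $C\setminus e$ stays dependent, which it does: $r_{M/e}(C\setminus e)=r_M(C)-1=\abs C -2 = \abs{C\setminus e}-1$, so $C\setminus e$ is dependent in $M/e$, and it contains a circuit; in fact one shows it is itself a circuit of $M/e$. So $M/e$ has a circuit of size $\abs C -1\ge \abs C/2$, again using $\abs C\ge 2$.

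The main obstacle I anticipate is nailing down, in the case $e\notin C$, the \emph{lower bound} on the size of the circuit of $M/e$ sitting inside $C$: a priori contraction could split $C$'s dependence, but because $C$ is a circuit of $M$ its corank is exactly $1$, so in $M/e$ its corank is at most $2$, and I must argue carefully that this forces any circuit it contains to have size at least $\abs C - 1$, not merely at least $1$. The bookkeeping for the edge cases ($e$ a loop, $e$ a coloop, $e\in\overline{C}_M$) is routine but needs to be stated. Once that is settled, in every case the circuit produced has size at least $\abs C - 1$, and $\abs C - 1\ge \abs C/2$ because $\abs C\ge 2$, which finishes the proof.
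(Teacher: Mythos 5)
Your case $e\in C$ is fine: $C\setminus\{e\}$ is indeed a circuit of $M/e$ of size $\abs{C}-1\ge\abs{C}/2$. The genuine gap is in the case $e\notin C$. There you assert that every circuit of $M/e$ contained in $C$ has size at least $\abs{C}-1$, justified by the claims that removing a circuit of $M/e$ from $C$ leaves a set independent in $M/e$ and that the largest circuit inside $C$ has size at least $r_{M/e}(C)+1$. Both auxiliary claims and the conclusion are false. Take $M$ to be the cycle matroid of the $4$-cycle on vertices $1,2,3,4$ together with the chord $e=13$, and let $C$ be the edge set of the $4$-cycle. Contracting $e$ identifies vertices $1$ and $3$, and $C$ splits into two parallel pairs $\{12,23\}$ and $\{34,41\}$: every circuit of $M/e$ inside $C$ has size $2=\abs{C}/2$, not $\abs{C}-1=3$, and $C\setminus\{12,23\}=\{34,41\}$ is dependent in $M/e$. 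The inequality $r_{M/e}(C)\ge r_M(C)-1=\abs{C}-2$ only bounds the nullity of $C$ in $M/e$ by $2$; it does not prevent $C$ from breaking into two circuits of roughly half its size, which is exactly why the lemma carries the factor $1/2$ rather than $\abs{C}-1$.

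The missing ingredient is an argument controlling how $C$ can decompose after contraction, and the paper supplies it with the circuit elimination axiom. After reducing to $E(M)=C\cup\{e\}$ (harmless, since circuits of a restriction are circuits of $M/e$), one observes that if $D_1,D_2$ are distinct circuits of $M/e$ with $D_1,D_2\neq C$, then $D_1\cup\{e\}$ and $D_2\cup\{e\}$ are circuits of $M$, and eliminating $e$ produces a circuit of $M$ inside $D_1\cup D_2\subseteq C$; minimality of $C$ forces $D_1\cup D_2=C$, so one of $D_1,D_2$ has at least $\abs{C}/2$ elements. A further elimination argument disposes of the case that $M/e$ has a unique circuit different from $C$. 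Some argument of this shape is needed; the rank and independence bookkeeping in your proposal cannot yield the stated bound.
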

\begin{proof}
  We may assume that $E(M)=C\cup \{e\}$. If $e\in C$, then $C\setminus \{e\}$
  is a circuit of $M/e$. Thus we may assume $e\notin C$. If $D_1$,
  $D_2$ are circuits of $M/e$ and $D_1, D_2\neq C$, then
  $D_1\cup\{e\}$, $D_2\cup \{e\}$ are circuits of $M$. By the circuit
  elimination axiom, $M$ should have a circuit $F\subseteq D_1\cup
  D_2$ and therefore $D_1\cup D_2=C$. Thus if $M/e$ has at least two
  distinct circuits, then one of them has size at least $\abs{C}/2$.
  Thus we may assume that $M/e$ has a unique circuit $D$. If $D\neq C$, then $D\cup\{e\}$ and $C$ are circuits of $M$. Let $f\in D$. Then $M$ has a circuit $D'\subseteq (D\cup C\cup\{e\})\setminus \{f\}$. This means that $e\in D'$ and $D'\setminus \{e\}$ is a circuit of $M/e$. This contradicts our assumption that $M/e$ has a unique circuit.
\end{proof}

Now we show that having small contraction depth is equivalent to having no large circuit.
In the following theorem, the upper bound is based on Ding, Oporowski, and Oxley~\cite{DOO1995}, though they state it in terms of the contraction-deletion depth. We include its proof for the completeness.
\begin{THM}\label{thm:cd}
  Let $c$ be the length of a largest circuit  in $M$. 
  (If $M$ has no circuits, then let $c=1$.)
  Then 
  \[
  \log_2 c\le \cd(M)\le c(c+1)/2.
  \]
\end{THM}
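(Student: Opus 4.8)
The plan is to establish the two inequalities separately; the lower bound is quick, and the upper bound is an induction driven by Seymour's theorem (Theorem~\ref{thm:seymour}). For the lower bound $\log_2 c \le \cd(M)$, I would show by induction on $\cd(M)$ that a connected matroid of contraction depth $k$ has no circuit of size exceeding $2^{k-1}$. If $M$ is connected with $\cd(M)=k$, pick $e \in E(M)$ with $\cd(M/e)\le k-1$; by Lemma~\ref{lem:circuithalf} any circuit $C$ of $M$ with $\abs{C}\ge 2$ yields a circuit of $M/e$ of size at least $\abs{C}/2$, and $M/e$ may be disconnected but each component has contraction depth at most $k-1$, so by induction that circuit has size at most $2^{k-2}$, giving $\abs{C}\le 2^{k-1}$. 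The disconnected case of $M$ reduces to a component containing a longest circuit, and the base case $k\le 1$ is immediate. Taking logarithms gives $\log_2 c \le \cd(M)$.

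For the upper bound $\cd(M)\le c(c+1)/2$, I would induct on $c$, the length of a longest circuit. If $c=1$ then $M$ has no circuits, so every element is a coloop, $M$ is a direct sum of coloops, each component has a single element and contraction depth $0\le 1$. For the inductive step, by the definition of contraction depth it suffices to handle a connected $M$ with $\abs{E(M)}\ge 1$: let $C$ be a longest circuit, of length $c$. Contracting the elements of $C$ one at a time, I want to reach a matroid whose longest circuit has dropped below $c$. By Theorem~\ref{thm:seymour}, $M/C$ has no circuit of size at least $c$, i.e. every circuit of $M/C$ has size at most $c-1$. Contracting the $c$ elements of $C$ costs $c$ in the contraction depth (each single-element contraction drops the depth by at most $1$), and the resulting matroid $M/C$ has largest circuit of length at most $c-1$, so by the induction hypothesis each of its components has contraction depth at most $(c-1)c/2$; hence $\cd(M/C)\le (c-1)c/2$. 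Therefore $\cd(M)\le c + (c-1)c/2 = c(c+1)/2$, as desired.

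The one point that needs care — and the main obstacle — is the bookkeeping when contracting $C$ disconnects the matroid and interacts with the recursive definition of $\cd$ on components: I need the statement ``$\cd(M/e)\ge \cd(M)-1$ whenever $M$ is connected'' (immediate from the definition) together with the fact that contracting along a set $S$ with $\abs{S}=s$ can be realized as $s$ successive single-element contractions, so $\cd(M/S)\ge \cd(M)-s$ even across the disconnections that arise midway. Since at each intermediate stage the matroid is a direct sum and $\cd$ of a direct sum is the max over components, one must check that the bound $\cd(M)\le \abs{S} + \cd(M/S)$ still goes through by choosing, at each step, an element in a component that still witnesses a contraction reducing depth; this is a routine but slightly fussy induction on $\abs{S}$. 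Once that lemma is in hand, the displayed inequality follows by combining it with Seymour's theorem and the induction on $c$ as above.
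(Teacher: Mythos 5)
Your proposal is correct and takes essentially the same route as the paper: the upper bound is the same induction on $c$ combining Theorem~\ref{thm:seymour} with the inequality $\cd(M)\le \abs{C}+\cd(M/C)$ (which the paper likewise asserts without elaboration), and the lower bound rests on Lemma~\ref{lem:circuithalf}, the only cosmetic difference being that you induct on $\cd(M)$ rather than on $\abs{E(M)}$. Two harmless slips worth fixing: when $c=1$ the matroid may still contain loops (each component is then a single element of contraction depth $1$, not $0$, which still satisfies the bound), and the ``fussy'' bookkeeping you flag is immediate, since for connected $M$ the definition gives $\cd(M)\le 1+\cd(M/e)$ for \emph{every} element $e$, and taking maxima over components handles any disconnection along the way.
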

\begin{proof}
  For the upper bound, we proceed by induction on $c$. 
  Observe that if $c\le 1$, then its contraction depth is at most $1$, as each component has at most $1$ element.
  We may assume that $M$ is connected and $c>1$.  Let $C$ be a longest circuit of $M$. 
  Then $\cd(M)\le \abs{C}+\cd(M/C)$. By Theorem~\ref{thm:seymour} and the induction hypothesis, $\cd(M/C)\le c(c-1)/2$ and therefore $\cd(M)\le c(c+1)/2$.

  Now let us prove the lower bound. We apply the induction on $\abs{E(M)}$.
  We may assume that $M$ is connected and $c>1$.
  So we may assume that $\abs{E(M)}>1$.
  By Lemma~\ref{lem:circuithalf}, for all $e\in E(M)$, $M/e$ has a circuit of size at least $c/2$ and therefore $\cd(M/e)\ge \log_2 c -1$.
  We conclude that $\cd(M)\ge \log_2 c$.
\end{proof}
By duality, we easily obtain the following.
\begin{COR}\label{cor:dd}
  Let $c^*$ be the length of a largest cocircuit in $M$. 
  (If $M$ has no cocircuits, then let $c^*=1$.)
  Then 
  \[
  \log_2 c^*\le \dd(M)\le c^*(c^*+1)/2.
  \]
\end{COR}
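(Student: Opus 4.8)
The plan is to obtain Corollary~\ref{cor:dd} from Theorem~\ref{thm:cd} by dualizing. The two key facts I would invoke are: (i) a cocircuit of $M$ is, by definition, a circuit of $M^*$, so the largest cocircuit of $M$ has length exactly $c^*$, which equals the length of the largest circuit of $M^*$; and (ii) the deletion depth and contraction depth are exchanged under duality, i.e. $\dd(M)=\cd(M^*)$ for every matroid $M$.

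First I would establish (ii). I would argue by induction on $\abs{E(M)}$, mirroring the recursive definition. If $E(M)=\emptyset$ both sides are $0$. If $M$ is disconnected, then $M^*$ is disconnected with the same components dualized (the components of a matroid and its dual coincide as subsets of the ground set, since $\lambda_M=\lambda_{M^*}$), so the claim follows componentwise from induction. If $M$ is connected with $E(M)\neq\emptyset$, then $\dd(M)$ is the minimum $k$ such that $\dd(M\setminus e)\le k-1$ for some $e$; since $M\setminus e=(M^*/e)^*$ and $\abs{E(M\setminus e)}<\abs{E(M)}$, the induction hypothesis gives $\dd(M\setminus e)=\cd((M\setminus e)^*)=\cd(M^*/e)$, so this minimum equals the minimum $k$ with $\cd(M^*/e)\le k-1$ for some $e$, which is precisely $\cd(M^*)$. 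This proves $\dd(M)=\cd(M^*)$.

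Then the corollary is immediate: apply Theorem~\ref{thm:cd} to the matroid $M^*$. Its largest circuit has length $c^*$ (the largest cocircuit of $M$), with the convention $c^*=1$ when $M$ has no cocircuits (equivalently $M^*$ has no circuits), so Theorem~\ref{thm:cd} yields $\log_2 c^* \le \cd(M^*) \le c^*(c^*+1)/2$, and substituting $\cd(M^*)=\dd(M)$ gives the stated inequalities.

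I do not anticipate a genuine obstacle here; the only mild care needed is in the disconnected case, namely checking that $M$ and $M^*$ have the same decomposition into connected components (which is standard and follows from $\lambda_M=\lambda_{M^*}$, noted earlier in the excerpt) so that the inductive step goes through cleanly. The authors evidently regard all of this as routine, which is why the excerpt just says ``By duality, we easily obtain the following.''
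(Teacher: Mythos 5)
Your proposal is correct and matches the paper's approach: the paper proves Corollary~\ref{cor:dd} simply ``by duality'' from Theorem~\ref{thm:cd}, which is exactly the reduction you carry out, with the identity $\dd(M)=\cd(M^*)$ (which you verify by a routine induction) and the fact that cocircuits of $M$ are circuits of $M^*$. Your write-up just makes explicit the details the authors left implicit.
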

Theorem~\ref{thm:cd} and Corollary~\ref{cor:dd}
prove the following theorem.
\begin{THM}\label{thm:main-long-circuit}
  \begin{enumerate}[(i)]
  \item   A class of matroids has bounded contraction depth if and only if
  all circuits have bounded size.
  \item   A class of matroids has bounded deletion depth if and only if
  all cocircuits have bounded size.
  \end{enumerate}
\end{THM}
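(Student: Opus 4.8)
The statement to prove is Theorem~\ref{thm:main-long-circuit}, which asserts the equivalence of bounded contraction depth with bounded circuit size, and bounded deletion depth with bounded cocircuit size. The plan is to derive this directly as a formal consequence of the quantitative bounds already established in Theorem~\ref{thm:cd} and Corollary~\ref{cor:dd}, which pin down $\cd(M)$ and $\dd(M)$ between a logarithm and a quadratic in the largest circuit length $c$ and largest cocircuit length $c^*$ respectively.

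First I would handle part (i). Suppose $\mathcal M$ is a class of matroids with bounded contraction depth, say $\cd(M)\le D$ for all $M\in\mathcal M$. By the lower bound in Theorem~\ref{thm:cd}, $\log_2 c\le \cd(M)\le D$ for every $M\in\mathcal M$, where $c=c(M)$ is the length of a largest circuit of $M$; hence $c(M)\le 2^D$ for all $M\in\mathcal M$, so all circuits have bounded size. Conversely, if all circuits in members of $\mathcal M$ have size at most $c_0$, then by the upper bound in Theorem~\ref{thm:cd}, $\cd(M)\le c_0(c_0+1)/2$ for all $M\in\mathcal M$ (noting that if $M$ has no circuits the relevant value $c=1$ still satisfies $c\le c_0$ after possibly enlarging $c_0$ to at least $1$), so $\mathcal M$ has bounded contraction depth. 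Part (ii) is entirely analogous, applying Corollary~\ref{cor:dd} in place of Theorem~\ref{thm:cd}, with $c^*$ replacing $c$ and $\dd$ replacing $\cd$; alternatively one may invoke duality, since $\dd(M)=\cd(M^*)$ and cocircuits of $M$ are circuits of $M^*$, and then quote part (i) for $M^*$.

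There is essentially no obstacle here: the theorem is a clean restatement of the already-proven inequalities in the language of boundedness over a class, and the only minor care needed is the bookkeeping around the convention $c=1$ (resp.\ $c^*=1$) when the matroid has no circuit (resp.\ cocircuit), which does not affect either direction of the argument. Accordingly the proof is just the two short implications above, and I would present it in a few lines.

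\begin{proof}[Proof of Theorem~\ref{thm:main-long-circuit}]
  We prove (i); then (ii) follows by applying (i) to the dual matroids, since $\dd(M)=\cd(M^*)$ and the cocircuits of $M$ are exactly the circuits of $M^*$.

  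Suppose a class $\mathcal M$ of matroids has bounded contraction depth, say $\cd(M)\le D$ for all $M\in\mathcal M$. For $M\in\mathcal M$ let $c$ be the length of a largest circuit of $M$ (or $c=1$ if $M$ has no circuit). By Theorem~\ref{thm:cd}, $\log_2 c\le \cd(M)\le D$, so $c\le 2^D$. Hence all circuits of matroids in $\mathcal M$ have size at most $2^D$.

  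Conversely, suppose there is an integer $c_0\ge 1$ such that every circuit of every matroid in $\mathcal M$ has size at most $c_0$. For $M\in\mathcal M$, with $c$ defined as above, we have $c\le c_0$. By Theorem~\ref{thm:cd}, $\cd(M)\le c(c+1)/2\le c_0(c_0+1)/2$. Thus $\mathcal M$ has bounded contraction depth.
\end{proof}
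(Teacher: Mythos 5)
Your proof is correct and matches the paper's argument: the paper derives Theorem~\ref{thm:main-long-circuit} exactly as an immediate consequence of Theorem~\ref{thm:cd} and Corollary~\ref{cor:dd} (the latter itself obtained by duality). Your explicit unpacking of the boundedness implications and the handling of the $c=1$ convention is fine and adds nothing that conflicts with the paper.
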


By Theorem~\ref{thm:various-depth}, we deduce the following.
\begin{COR}\label{cor:no-long-circuit}
  Let $k\ge 1$. 
  If a matroid $M$ has no circuits of size more than $k$
  or no cocircuits of size more than $k$,
  then the branch-depth of $M$ is at most $\frac12{k(k+1)}$.
\end{COR}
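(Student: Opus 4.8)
\textbf{Proof proposal for Corollary~\ref{cor:no-long-circuit}.} The plan is to reduce directly to the results already established in this section, namely Theorem~\ref{thm:cd}, Corollary~\ref{cor:dd}, and part~(1) of Theorem~\ref{thm:various-depth}. First I would handle the case where $M$ has no circuit of size more than $k$. By hypothesis the length $c$ of a largest circuit of $M$ satisfies $c\le k$ (and $c=1$ if $M$ has no circuit, which is consistent with $k\ge 1$). Theorem~\ref{thm:cd} then gives $\cd(M)\le c(c+1)/2\le k(k+1)/2$, since the function $x\mapsto x(x+1)/2$ is increasing for $x\ge 0$.

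Next I would invoke Theorem~\ref{thm:various-depth}(1), which states that the branch-depth of $M$ is at most $\cdd(M)$, together with the trivial inequality $\cdd(M)\le\cd(M)$ recorded in the proof of that theorem (or directly with part~(2), $\cdd(M)\le\min(\cd(M),\dd(M))$). Chaining these, the branch-depth of $M$ is at most $\cd(M)\le k(k+1)/2$, as claimed. For the dual case, where $M$ has no cocircuit of size more than $k$, the same argument applies with Corollary~\ref{cor:dd} in place of Theorem~\ref{thm:cd}: the largest cocircuit has size $c^*\le k$, so $\dd(M)\le c^*(c^*+1)/2\le k(k+1)/2$, and then the branch-depth of $M$ is at most $\cdd(M)\le\dd(M)\le k(k+1)/2$.

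There is essentially no obstacle here: the corollary is a bookkeeping consequence of the preceding theorems, and the only point requiring a word of care is the monotonicity of the bound $c(c+1)/2$ in $c$, which lets us pass from the exact largest-circuit length to the stated bound in terms of $k$. One should also note the edge case $k=1$, where the hypothesis "no circuit of size more than $1$" forces $M$ to have no circuit at all (matroid circuits have size at least $1$, and a size-$1$ circuit is a loop, which is allowed), so $c\le 1$ and the branch-depth is at most $1=\tfrac12\cdot 1\cdot 2$; this is already subsumed by the general inequality.
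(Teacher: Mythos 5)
Your proposal is correct and follows exactly the route the paper intends: the corollary is deduced by chaining Theorem~\ref{thm:various-depth} (branch-depth $\le \cdd(M)\le\min(\cd(M),\dd(M))$) with the bounds $\cd(M)\le c(c+1)/2$ from Theorem~\ref{thm:cd} and $\dd(M)\le c^*(c^*+1)/2$ from Corollary~\ref{cor:dd}, using monotonicity of $x\mapsto x(x+1)/2$. No gaps; your treatment of the circuit-free and $k=1$ edge cases is consistent with the paper's conventions.
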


This also allows us to characterize classes of matroids having
bounded contraction depth and bounded deletion depth at the same time.
\begin{COR}\label{cor:cddd}
  A class $\mathcal M$ of matroids has bounded deletion depth and bounded contraction depth
  if and only if
  there exists $m$ such that 
  every connected component of a matroid in $\mathcal M$ has at most $m$ elements.
\end{COR}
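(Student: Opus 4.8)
The plan is to deduce Corollary~\ref{cor:cddd} by combining Theorem~\ref{thm:main-long-circuit} with a structural fact about connected matroids: a connected matroid on at least two elements has both a circuit and a cocircuit, and if its longest circuit has at most $c$ elements and its longest cocircuit has at most $c^*$ elements, then the whole ground set has bounded size. The forward direction is then routine. First I would prove the easy direction: if every connected component of every matroid in $\mathcal M$ has at most $m$ elements, then every circuit and every cocircuit has at most $m$ elements, so by Theorem~\ref{thm:main-long-circuit} (both parts) the class $\mathcal M$ has bounded contraction depth and bounded deletion depth.

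For the harder direction, suppose $\mathcal M$ has bounded contraction depth and bounded deletion depth. By Theorem~\ref{thm:main-long-circuit}, there is a constant $c$ such that every circuit and every cocircuit of every matroid in $\mathcal M$ has at most $c$ elements. It suffices to bound the size of a connected matroid $M$ all of whose circuits and cocircuits have at most $c$ elements. The key step is the following claim: in a connected matroid $M$ with $\abs{E(M)}\ge 2$, every element lies in a circuit of size at most $c$ and in a cocircuit of size at most $c$, and moreover one can cover $E(M)$ by boundedly many circuits; more precisely, $r(M)\le$ (something bounded in $c$) and $\abs{E(M)}-r(M)=r(M^*)\le$ (something bounded in $c$). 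To see $r(M)$ is bounded, I would argue that a connected matroid of rank $r$ has a cocircuit of size at least $r+1$ is false in general, so instead I would use: if $M$ is connected with a spanning circuit-free structure, take a base $B$; for each $e\notin B$ the fundamental circuit has size at most $c$, so $e$ is spanned by at most $c-1$ elements of $B$; and connectivity forces the ``overlap graph'' on $B$ to be connected, which bounds $\abs{B}$ only if we also know cocircuits are small. The cleanest route is to invoke the known fact (Oxley~\cite{Oxley2011a}) that in a connected matroid, the maximum circuit size $c$ and maximum cocircuit size $c^*$ satisfy $\abs{E(M)}\le c\cdot c^*$ (or a similar polynomial bound); with that, $\abs{E(M)}\le c^2$ and we are done with $m=c^2$.

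The main obstacle I anticipate is pinning down the exact elementary bound on $\abs{E(M)}$ in terms of $c$ and $c^*$ for connected matroids. I expect this to follow from Tutte's linking/connectivity theory or from a direct induction: delete or contract an element $e$; one of $M\setminus e$, $M/e$ remains connected (Tutte's wheels-and-whirls-adjacent fact that a connected matroid with $\ge 2$ elements has an element whose deletion or contraction is connected), its longest circuit is still $\le c$ and longest cocircuit still $\le c^*$ (deletion cannot increase circuit length, and by Theorem~\ref{thm:seymour}-type arguments contraction cannot increase cocircuit length beyond $c^*$, using duality), so by induction $\abs{E(M)}-1\le f(c,c^*)$, giving $\abs{E(M)}\le f(c,c^*)+1$. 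If an off-the-shelf bound is available in Oxley's book, I would cite it rather than reprove it. Once the bound on connected matroids is in hand, the corollary follows immediately by applying it componentwise with $m=f(c,c)+1$.
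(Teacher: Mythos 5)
Your proposal follows essentially the same route as the paper: bound circuit and cocircuit sizes via Theorem~\ref{thm:cd} and Corollary~\ref{cor:dd} (equivalently Theorem~\ref{thm:main-long-circuit}), then invoke the known bound on the size of a connected matroid with no large circuits or cocircuits; the ``off-the-shelf'' fact you hoped for is exactly what the paper cites, namely Lemos and Oxley~\cite{LO2001}, who prove $\abs{E(M)}\le cc^*/2$ for connected $M$. (Your fallback induction via deleting or contracting a single element would not by itself yield a bound, but it is not needed once the Lemos--Oxley result is cited.)
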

\begin{proof}
  The converse is trivial.
  Let us prove the forward direction.
  Theorem~\ref{thm:cd} and Corollary~\ref{cor:dd}
  imply that there exist $c$ and $c^*$ such that
  in every connected component of a matroid in $\mathcal M$,
  all circuits have size at most $c$
  and all cocircuits have size at most $c^*$.
  Lemos and Oxley~\cite{LO2001} showed that
  if a connected matroid
  has no circuits of more than $c$ elements
  and no cocircuits of more than $c^*$ elements,
  then it has at most $cc^*/2$ elements.
  Thus, each component of a matroid in $\mathcal M$ has at most $cc^*/2$ elements.
\end{proof}

It is natural to ask an obstruction for having large branch-depth in a matroid.
The \emph{$n$-fan} $F_n$ is the graph on $n+1$ vertices having a vertex $v$ adjacent to all other vertices such that $F_n\setminus v$ is a path on $n$ vertices.
We conjecture that matroids of sufficiently large branch-depth
has $M(F_n)$ or
the uniform matroid $U_{n,2n}$ of rank $n$ on $2n$ elements as a minor.
We also conjecture that matroids of sufficiently large contraction-deletion depth
has $M(F_n)$,
the uniform matroid $U_{n,2n}$,
$M(C_{n,n})$, or its dual $M^*(C_{n,n})$ as a minor.

For a graphic matroid, Dittmann and Oporowski~\cite{DO2002} identified obstructions
for large contraction-deletion depth. Here is their theorem in terms of our terminologies.
\begin{THM}[Dittmann and Oporowski~\cite{DO2002}]
  For every integer $n>3$, there exists $N$ such that
  every graph whose cycle matroid has con\-tract\-ion-deletion depth at least $N$
  has a minor isomorphic to $F_n$, $C_{n,n}$, or $C_{n,n}^*$.
\end{THM}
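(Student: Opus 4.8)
The plan is to prove the contrapositive: for a fixed $n>3$, find $N=N(n)$ so that every graph $G$ whose cycle matroid has no minor isomorphic to $M(F_n)$, $M(C_{n,n})$, or $M^*(C_{n,n})$ (the natural matroidal reading of ``a minor isomorphic to $F_n$, $C_{n,n}$, or $C_{n,n}^*$'') satisfies $\cdd(M(G))\le N$. Since $\cdd$ of a matroid is the maximum of $\cdd$ over its connected components, and the connected components of $M(G)$ are the cycle matroids of the $2$-connected blocks of $G$ together with a one-element matroid for each bridge and each loop, and since each of the three forbidden minors is a connected matroid with at least two elements, $G$ avoids all three exactly when every block of $G$ does. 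It therefore suffices to prove the following structural statement: there is a function $g(n)$ such that every $2$-connected graph $G$ with $\cdd(M(G))\ge g(n)$ contains $F_n$, $C_{n,n}$, or $C_{n,n}^*$ as a minor; then $N(n)=g(n)$ works.

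To prove this statement I would proceed as follows. A coarse estimate handles the ``shallow'' situation: if $G$ has neither a cycle of length at least $\ell$ nor a bond of size at least $\ell$, then every circuit and cocircuit of $M(G)$ has fewer than $\ell$ elements, so by Theorem~\ref{thm:cd} and Corollary~\ref{cor:dd} both $\cd(M(G))$ and $\dd(M(G))$ are at most $\ell(\ell+1)/2$, and hence $\cdd(M(G))\le\ell(\ell+1)/2$ by Theorem~\ref{thm:various-depth}(2). So large $\cdd(M(G))$ already forces $G$ to contain a long cycle or a large bond. To go further I would use the recursion $\cdd(M)=1+\min_{e}\min(\cdd(M\setminus e),\cdd(M/e))$, valid whenever $M$ is connected: repeatedly passing to a connected component of maximum $\cdd$ and then deleting or contracting the element that keeps $\cdd$ as large as possible produces a long sequence of graphic minors $M(G)=M_0,M_1,\dots$ with $\cdd(M_i)\ge g(n)-i$, so a long initial segment of the corresponding graphs all carry a long cycle or a large bond. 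The remaining and central task is to weave this nested family of cycles and cuts into one of the three canonical minors: I would bound, via pigeonhole, the number of distinct attachment vertices of the cuts that occur, then use Ramsey-type arguments to route many internally disjoint paths along a long cycle, obtaining $F_n$ when many attachments meet a single vertex of the cycle, $C_{n,n}$ when a parallel structure recurs around a cycle, and $C_{n,n}^*$ in the dual configuration. The self-duality $\cdd(M)=\cdd(M^*)$ (immediate from the definition, and compatible with the fact that $M(G)$ and $M^*(G)$ have the same connectivity function), together with the observation that the forbidden list is closed under matroid duality, lets me treat the $C_{n,n}$ and $C_{n,n}^*$ cases by one argument up to dualizing.

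The main obstacle is exactly this last weaving step: converting ``large type'' into ``large, highly symmetric minor'' with explicit bounds. It is delicate because type is not controlled by any single local quantity. For instance, a long cycle carrying one heavy parallel class has both a large circuit and a large cocircuit in its cycle matroid, yet $\cdd$ at most $3$, since contracting one edge of the parallel class turns the rest into loops together with a short cycle; so arguments that look only at the circumference or the maximum bond size are far too lossy. One is forced to track behaviour along an entire optimal reduction sequence and to iterate Ramsey-type extraction until a configuration rigid enough to carry $F_n$, $C_{n,n}$, or $C_{n,n}^*$ emerges, and I expect keeping all the thresholds mutually consistent across deletions, contractions, and passages to minors to be the heaviest part of the work. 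By contrast the surrounding scaffolding -- the contrapositive, the reduction to $2$-connected blocks, and the coarse circuit/cocircuit estimate -- is routine.
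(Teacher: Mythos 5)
There is a genuine gap, and it sits exactly where you place it yourself: the ``weaving step'' is not a technical detail to be deferred, it is the entire content of the theorem. Note first that the paper does not prove this statement at all; it is quoted from Dittmann and Oporowski, whose proof is a long structural analysis of graphs of large type (their ``type'' being $\cdd$ of the cycle matroid). Your sketch reproduces only the routine scaffolding around that result: the reduction to blocks, the observation that $\cdd$ is the maximum over components, and the recursion $\cdd(M)=1+\min_e\min(\cdd(M\setminus e),\cdd(M/e))$ for connected $M$. The step from ``large $\cdd$'' (or, in the contrapositive, ``no $F_n$, $C_{n,n}$, $C_{n,n}^*$ minor'') to the extraction of one of the three unavoidable minors is left to unspecified ``pigeonhole'' and ``Ramsey-type arguments'' with no concrete configuration, no induction, and no indication of what structure is actually being extracted from an optimal deletion/contraction sequence. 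As written, no bound $N(n)$ could be verified from the proposal, so it does not constitute a proof.

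Two further points undercut the scaffolding you do give. First, the coarse estimate via Theorem~\ref{thm:cd}, Corollary~\ref{cor:dd} and Theorem~\ref{thm:various-depth} handles the case where $G$ has neither a long cycle nor a large bond, but that case is not aligned with the hypothesis of the contrapositive: excluding $F_n$, $C_{n,n}$ and $C_{n,n}^*$ does not bound the circumference or the maximum bond size (the cycle $C_m$ and the graph obtained from $C_m$ by making one edge a large parallel class exclude all three for fixed $n$ yet have arbitrarily long cycles or large bonds), so this estimate does not reduce the problem; your own parallel-class example shows you are aware that circuit/cocircuit sizes do not control $\cdd$, which is precisely why the missing argument cannot be replaced by these bounds. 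Second, the closure-under-duality remark needs care: the statement concerns graph minors of $G$ (with $C_{n,n}^*$ the planar dual multicycle), and the list $\{M(F_n),M(C_{n,n}),M^*(C_{n,n})\}$ is not literally closed under matroid duality (the dual of $M(F_n)$ is a bond matroid not on the list), so the intended symmetry has to be formulated and justified rather than invoked. To make the proposal into a proof you would essentially have to redo the main argument of Dittmann and Oporowski's paper.
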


\subsection{Connections to the tree-depth of a graph}
\label{subsec:graphic}

Hicks and McMurray Jr.~\cite{HM2005} and independently, Mazoit and Thomass\'e~\cite{MT2005} proved that if a graph $G$ has at least one non-loop cycle,
then the branch-width of its cycle matroid $M(G)$ is equal to the branch-width of $G$.
How about branch-depth? Theorem~\ref{thm:main-td} shows that the branch-depth and the tree-depth are tied for graphs.
Can we say that the tree-depth of graphs
are tied to the branch-depth of their cycle matroids?

It turns out that the tree-depth of graphs is not tied to
the branch-depth of their cycle matroids in general. 
This is because connectedness in graphs is different from connectedness in their cycle matroids.
For example, the path~$P_n$ on $n>2$ vertices has tree-depth $\lceil \log_2(n+1)\rceil$, see \cite[(6.2)]{NO2012}.
However its cycle matroid $M(P_n)$ has contraction depth~$1$, deletion depth~$1$, and branch-depth $1$.

Only one direction holds in general; if the tree-depth of a graph is small,
then the contraction depth, the contraction-deletion depth, and the branch-depth of
its cycle matroid are small, by the following proposition.

\begin{PROP}\label{prop:td-to-cd}
  Let $t$ be the tree-depth of a graph $G$.
  Then,
  \[
    \bd(M(G))
    \le  \cdd(M(G))\le \cd(M(G))\le 2^{2(t-1)}
  \]
  and
  \[    \bd(M(G))\le \bd(G)-1\le t.\]
  In addition, if $G$ is connected, then $\bd(M(G))\le \bd(G)-1\le t-1$.
\end{PROP}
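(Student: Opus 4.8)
The plan is to prove the chain of inequalities in stages, starting from the known relationships and building up to the cycle matroid bound.

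First I would establish $\bd(M(G)) \le \bd(G) - 1$ for connected $G$, and the additive-one slack in general. The idea: given a $(k,k)$-decomposition $(T,\sigma)$ of the graph $G$ (i.e.\ of $\lambda_G$), where $\sigma$ maps $E(G)$ to the leaves of $T$, I want to reuse the \emph{same} tree and bijection as a decomposition of $\lambda_{M(G)}$. The key observation is that for any $X \subseteq E(G)$ we have $\lambda_{M(G)}(X) \le \lambda_G(X) - 1$ whenever both $X$ and $E \setminus X$ are nonempty and $G$ is connected, because the matroid connectivity $\lambda_{M(G)}(X) = r(X) + r(E\setminus X) - r(E)$ counts (roughly) the number of shared vertices minus the number of shared connected components, and connectivity of $G$ forces at least one shared component whenever the cut is nontrivial. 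Actually the standard fact is $\lambda_{M(G)}(X) = \lambda_G(X) - (\text{number of components of }G\text{ meeting both }X\text{ and }E\setminus X) + (\text{adjustments})$; I would cite or re-derive the precise statement that for connected $G$, $\lambda_{M(G)}(X) \le \lambda_G(X) - 1$ for nontrivial $X$. Then every internal node of $T$ that had width $\le k$ in the graph decomposition now has width $\le k-1$ in the matroid decomposition, while the radius is unchanged at $\le k$; combined with $\bd(G) \le k$ we get a $(k-1, k)$-decomposition of $M(G)$, and then I would clean up the radius-vs-width discrepancy (a $(k-1,k)$-decomposition shows $\bd(M(G)) \le k$, but we want $\le k-1$, so I would need to also trim the radius — this is where Lemma~\ref{lem:conn} and the structure of $T$ help, or alternatively note the root can be chosen so the radius drops). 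For disconnected $G$, apply Lemma~\ref{lem:branchdepthunion} and handle components separately, absorbing the $+1$.

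Second, for the middle chain $\bd(M(G)) \le \cdd(M(G)) \le \cd(M(G))$, this is immediate from Theorem~\ref{thm:various-depth}, so nothing new is needed there.

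Third, and this is the substantive new bound, I would prove $\cd(M(G)) \le 2^{2(t-1)}$ where $t = \td(G)$. By Theorem~\ref{thm:cd}, $\cd(M(G)) \le c(c+1)/2$ where $c$ is the length of the longest circuit of $M(G)$, equivalently the length of the longest cycle of $G$. So it suffices to bound the longest cycle (circumference) of $G$ in terms of its tree-depth. The standard fact is that a graph of tree-depth $t$ contains no path on $2^t$ vertices (since tree-depth and longest path length are tied with the bound $\td(G) \ge \log_2(\text{longest path length} + 1)$), hence the longest path, and a fortiori the longest cycle, has fewer than $2^t$ edges. A more careful count: a rooted forest of height $t$ whose closure contains $G$ forces every cycle of $G$ to have length at most $2^{t-1}$, roughly because a cycle must lie within the ancestor-descendant structure and the longest chain has $t$ vertices. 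Plugging $c \le 2^{t-1}$ into $c(c+1)/2$ gives $\cd(M(G)) \le 2^{t-1}(2^{t-1}+1)/2 \le 2^{2(t-1)}$, matching the claimed bound (for $t \ge 1$; the small cases $t \le 1$ are trivial since then $G$ has no cycles). The main obstacle here is pinning down the exact circumference-vs-tree-depth constant so that it fits cleanly into $2^{2(t-1)}$ rather than something weaker; I expect $c < 2^t$ from the path bound is more than enough, and I would verify the arithmetic $\lfloor 2^t/2 \rfloor \cdot (\lfloor 2^t/2\rfloor + 1)/2 \le 2^{2(t-1)}$ holds for all $t \ge 1$.

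Finally, for the last displayed line $\bd(M(G)) \le \bd(G) - 1 \le t$, the first inequality is what I established in step one (in the general, possibly disconnected case, with the understanding that I may need the non-strict version), and $\bd(G) - 1 \le t$, i.e.\ $\bd(G) \le t+1$, follows from Theorem~\ref{thm:main-td} which gives $\bd(G) = k$ with $k - 1 \le t$. The refinement for connected $G$, namely $\bd(G) - 1 \le t - 1$, i.e.\ $\bd(G) \le t$, follows from Lemma~\ref{lem:td2bd} which states precisely that the branch-depth of a connected graph is at most its tree-depth. So the connected case is actually the cleaner one and I would present it first, then note the general case loses one in the passage through Lemma~\ref{lem:branchdepthunion} or through comparing with components. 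The only real work is step three (the circumference bound) and the radius bookkeeping in step one; everything else is assembling cited results.
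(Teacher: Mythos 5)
Your circumference bound and the middle chain are exactly the paper's argument: the first two inequalities come from Theorem~\ref{thm:various-depth}, and $\cd(M(G))\le 2^{2(t-1)}$ is obtained, as in the paper, by bounding a longest cycle of $G$ by $L\le 2^{t-1}$ (the paper writes $t\ge \td(C_L)=1+\lceil\log_2 L\rceil$) and plugging into Theorem~\ref{thm:cd}. Your step one for connected $G$ is also the paper's: invoke $\lambda_{M(G)}(X)\le\lambda_G(X)-1$ for nontrivial $X$ (\cite[Lemma 8.1.7]{Oxley2011a}) and reuse the graph decomposition coming from Lemma~\ref{lem:td2bd}; the radius-versus-width bookkeeping you flag is not treated in the paper either, which passes directly from the width inequality to $\bd(M(G))\le\bd(G)-1$, so you are not missing anything the paper actually supplies on that point.

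The genuine gap is the disconnected case. The proposition asserts $\bd(M(G))\le\bd(G)-1$ for every graph $G$, but your plan of handling components separately and recombining with Lemma~\ref{lem:branchdepthunion} cannot deliver this: from $\bd(M(G_i))\le\bd(G_i)-1$ you only get $\bd(M(G))\le\max_i\bd(M(G_i))+1\le\max_i\bd(G_i)\le\bd(G)$, and the $+1$ cannot in general be ``absorbed'', since $\bd(G)$ may equal $\max_i\bd(G_i)$ while the matroid union still pays the $+1$; your own hedge about ``the non-strict version'' concedes this, but the non-strict inequality is weaker than what the statement claims. The paper's fix is different and essentially free: identify one vertex from each component of $G$ into a single vertex. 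This leaves the cycle matroid unchanged, makes the graph connected, and does not increase $\lambda_G$ on any edge set (boundary vertices can only merge), hence does not increase $\bd(G)$; the connected case then applies verbatim and gives $\bd(M(G))\le\bd(G)-1$, after which $\bd(G)-1\le t$ follows from Theorem~\ref{thm:main-td}, exactly as you say.
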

\begin{proof}
  For the first inequality, 
  it is enough to show that $\cd(M(G))\le t$ by Theorem~\ref{thm:various-depth}.
  Let $L$ be the length of a longest cycle of $G$.
  Since $t\ge \td(C_L)=1+\td(P_{L-1})=1+\lceil \log_2L  \rceil$,
  we deduce that $L\le 2^{t-1}$.
  By Theorem~\ref{thm:cd}, we deduce that
  $\cd(M(G))\le 2^{t-1}(2^{t-1}+1)/2\le 2^{2(t-1)}$.

  For the second inequality,
  we use the following inequality in \cite[Lemma 8.1.7]{Oxley2011a}:
  \begin{quote}
    For a connected graph $G$, if $\emptyset\neq X\neq E(G)$, then
    \[\lambda_{M(G)}(X)\le \lambda_{G}(X)-1.\]
  \end{quote}
  This inequality implies that if $G$ is connected, then
  $\bd(M(G))\le \bd(G)-1\le t-1$ by Lemma~\ref{lem:td2bd}.
  If $G$ is disconnected, then we identify one vertex from each component into one vertex. That cannot increase the branch-depth of the graph
  but the cycle matroid does not change and therefore we deduce our conclusion
  with Theorem~\ref{thm:main-td}.
\end{proof}

Now we will show that under some mild connectivity assumptions,
the tree-depth of graphs and
the contraction depth of their cycle matroids are tied.
For the tree-depth, it is well known that every graph of tree-depth larger than $n$
contains a path of length $n$, see~\cite[Proposition 6.1]{NO2012}.
An old theorem of Dirac~\cite{Dirac1952} states that a $2$-connected graph with a sufficiently long path
contains a long cycle. (In fact the paper contains a proof for $\sqrt{L/2}$ and remarks that it can be improved to $2\sqrt{L}$.)
\begin{THM}[Dirac~\cite{Dirac1952}]\label{thm:dirac}
  If a $2$-connected graph has a path of length $L$,
  then it has a cycle of length at least  $2\sqrt{L}$.
\end{THM}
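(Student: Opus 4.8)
The plan is to establish the equivalent statement that if $G$ is $2$-connected and $C$ is a longest cycle of $G$, of length $c$, then every path in $G$ has length at most $\lfloor c^2/4\rfloor$; applied to a path of length $L$ this gives $L\le\lfloor c^2/4\rfloor$, hence $c\ge 2\sqrt L$. Everything rests on one rerouting observation. Call a path $Q$ an \emph{ear} of $C$ if its two endpoints lie on $C$ while its interior avoids $V(C)$; writing $x,y$ for its endpoints, the two arcs of $C$ between $x$ and $y$ have lengths $p\le q$ with $p+q=c$, and replacing the arc of length $p$ by $Q$ produces a cycle of length $q+\abs{E(Q)}$, so maximality of $C$ forces $\abs{E(Q)}\le p\le\lfloor c/2\rfloor$. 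Since $G$ is $2$-connected, every bridge of $C$ has at least two attachment vertices on $C$, so a fan-type application of Menger's theorem inside each bridge shows that every vertex of $G$ outside $C$ lies in the interior of some ear of $C$; in particular no vertex of $G$ is at distance greater than $\lfloor c/2\rfloor$ from $V(C)$.

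The next step is to dissect a given path $P$ along $V(P)\cap V(C)$. Reading off the vertices of $V(P)\cap V(C)$ in the order in which they occur along $P$, the path $P$ splits into at most two \emph{pendant} pieces at its two ends, some pieces that are subarcs of $C$, and some pieces that are ears of $C$ (a piece between two consecutive $C$-vertices of $P$ has all of its interior on $C$, making it an arc, or all of its interior off $C$, making it an ear). The subarc pieces are pairwise vertex-disjoint subarcs of $C$, so their total length is less than $c$; each ear piece has length at most $\lfloor c/2\rfloor$ by the rerouting observation; the two pendant pieces have length at most $c$ by a further use of $2$-connectivity (an endpoint of a longest path must have a neighbour not far back along $P$, for otherwise there would be no two internally disjoint paths from that endpoint to the rest of $G$); and the number of ear and pendant pieces is at most $\abs{V(P)\cap V(C)}+1\le c+1$. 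Summing these contributions already yields a bound $\abs{E(P)}=O(c^2)$, hence an estimate of the qualitative shape $c=\Omega(\sqrt L)$.

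To reach the sharp constant — which is precisely the improvement Dirac indicates in the remark quoted above — the counting must be tightened: an ear of length $\ell$ has its two endpoints at distance at least $\ell$ apart along $C$, so $P$ cannot use many long ears, and one must balance the number of nontrivial ear detours taken by $P$ against their total length, showing that the portions of $C$ that are ``consumed'' by the ear pieces and by the subarc pieces of $P$ do not overlap in a way that forces overcounting. This optimization is the step I expect to be the main obstacle: the careful bookkeeping of how $P$'s ear detours interleave with its $C$-arcs around the cycle, organized so as to land exactly at $\abs{E(P)}\le\lfloor c^2/4\rfloor$ (which is tight already for $K_{2,3}$, where $c=4$ and a longest path has length $4$) rather than merely at some constant multiple of $\sqrt L$.
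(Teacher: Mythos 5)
This statement is not proved in the paper at all: it is quoted from Dirac's 1952 paper, and the authors even note that Dirac's article proves the weaker bound $\sqrt{L/2}$ and only remarks that it can be improved to $2\sqrt{L}$. So there is no in-paper argument to compare with, and judged on its own your proposal is incomplete. Your decomposition of a path $P$ along $V(P)\cap V(C)$, together with the (correct) rerouting bound $\abs{E(Q)}\le\lfloor c/2\rfloor$ for every ear $Q$ of a longest cycle $C$, gives at best $L\le (c-1)\lfloor c/2\rfloor$ plus the pendant contributions, i.e.\ roughly $c\ge\sqrt{2L}$, not $c\ge 2\sqrt{L}$. You acknowledge this and defer the missing factor to an unexecuted ``optimization'' of how ear detours interleave with arcs of $C$, calling it the main obstacle. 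Since the entire content of the cited theorem is the sharp constant (the surrounding results use it quantitatively, and Dirac himself stopped at $\sqrt{L/2}$), leaving that step open means the statement is not proved.

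There is a second concrete gap: the pendant pieces. You assert they have length at most $c$ ``by a further use of $2$-connectivity,'' but the parenthetical you offer (an endpoint of a longest path has a neighbour not far back along $P$) is not an argument, and the fan lemma you invoke only shows that every vertex off $C$ lies on an ear, hence is at distance at most $\lfloor c/2\rfloor$ from $V(C)$. That bounds the distance from the end of $P$ to the cycle, not the length of the initial segment of $P$ before it first meets $C$; that segment is a path inside a single bridge from an internal vertex to an attachment vertex, and the ear bound only controls attachment-to-attachment paths, so a separate argument is needed here. A minor further imprecision: a piece of $P$ between consecutive vertices of $V(P)\cap V(C)$ cannot have nonempty interior on $C$ (such interior vertices would themselves belong to $V(P)\cap V(C)$), so those pieces are single edges, possibly chords rather than arcs of $C$; the claim that the ``subarc pieces'' are disjoint subarcs of total length less than $c$ should be replaced by simply counting at most $\abs{V(P)\cap V(C)}-1\le c-1$ such edges.
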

By combining with Proposition~\ref{prop:td-to-cd},
we can deduce that the tree-depth of $2$-connected graphs
are tied to the contraction depth of their cycle matroids.
\begin{PROP}
  Let $G$ be a $2$-connected graph of tree-depth $t$. Then
  \[
    1+\frac{1}{2}\log_2 (t-1)\le \cd(M(G))\le 2^{2(t-1)}.
  \]
\end{PROP}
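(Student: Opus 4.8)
The plan is to prove the two inequalities separately, relying on results already established in the excerpt. The upper bound $\cd(M(G))\le 2^{2(t-1)}$ is immediate: it is literally the first chain of inequalities in Proposition~\ref{prop:td-to-cd}, which holds for arbitrary graphs $G$ of tree-depth $t$, so no connectivity hypothesis is needed there. The only new content is the lower bound $1+\tfrac12\log_2(t-1)\le \cd(M(G))$, and this is where the $2$-connectivity assumption enters.

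For the lower bound, the key observation is that tree-depth is controlled by the longest path. Since $G$ has tree-depth $t$, the contrapositive of the standard fact (cited in the excerpt as \cite[Proposition 6.1]{NO2012}) that every graph with no path of length $n$ has tree-depth at most $n$ tells us that $G$ has a path of length at least $t-1$. Now because $G$ is $2$-connected, Dirac's theorem (Theorem~\ref{thm:dirac}) applies: $G$ contains a cycle of length at least $2\sqrt{t-1}$. Hence the longest circuit $c$ in the cycle matroid $M(G)$ satisfies $c\ge 2\sqrt{t-1}$. Finally, Theorem~\ref{thm:cd} gives $\cd(M(G))\ge \log_2 c \ge \log_2(2\sqrt{t-1}) = 1+\tfrac12\log_2(t-1)$, which is exactly the claimed bound. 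Combining the two bounds completes the proof.

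I expect no serious obstacle here — the statement is essentially a bookkeeping combination of Proposition~\ref{prop:td-to-cd}, the path-to-tree-depth correspondence, Dirac's theorem, and Theorem~\ref{thm:cd}. The only minor care needed is to make sure the path found from the tree-depth bound has length at least $t-1$ (so that Dirac yields $2\sqrt{t-1}$ and not something smaller), and to be slightly careful about edge cases such as $t=1$ or $t=2$ where $\log_2(t-1)$ is $-\infty$ or $0$; but a $2$-connected graph already contains a cycle, so $t\ge 3$ in any nontrivial case and the bound is meaningful. One should also note that Dirac's theorem as quoted measures paths by length (number of edges), matching the convention in the excerpt, so the translation "path of length $t-1$" is consistent. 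A short proof would read roughly:

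\begin{proof}
The upper bound is part of Proposition~\ref{prop:td-to-cd}. For the lower bound, since $\td(G)=t$, the graph $G$ contains a path of length at least $t-1$ \cite[Proposition 6.1]{NO2012}. As $G$ is $2$-connected, Theorem~\ref{thm:dirac} implies that $G$ has a cycle of length at least $2\sqrt{t-1}$, and therefore the longest circuit $c$ of $M(G)$ satisfies $c\ge 2\sqrt{t-1}$. By Theorem~\ref{thm:cd}, $\cd(M(G))\ge \log_2 c\ge \log_2\!\bigl(2\sqrt{t-1}\bigr)=1+\tfrac12\log_2(t-1)$.
\end{proof}
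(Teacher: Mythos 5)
Your proof is correct and follows exactly the paper's argument: the upper bound from Proposition~\ref{prop:td-to-cd}, and the lower bound by extracting a path of length $t-1$ via \cite[Proposition 6.1]{NO2012}, applying Dirac's theorem (Theorem~\ref{thm:dirac}) to get a cycle of length at least $2\sqrt{t-1}$, and invoking Theorem~\ref{thm:cd}. Your added remarks on edge cases and the length convention are careful but not substantively different from the paper's proof.
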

\begin{proof}
  By \cite[Proposition 6.1]{NO2012}, $G$ has a path of length $t-1$
  and by Theorem~\ref{thm:dirac}, $G$ contains a cycle of length at least  $2\sqrt{t-1}$. This means that the contraction depth of $M(G)$ is at least $\log_2(2\sqrt{t-1})$
  by Theorem~\ref{thm:cd}.
  The upper bound is given by 
  Proposition~\ref{prop:td-to-cd}. 
\end{proof}

Still, $2$-connectedness does not imply that the tree-depth of graphs
and the branch-depth of their cycle matroids are tied; for instance $C_n$.
However, for $3$-connected graphs,
tree-depth of graphs
and contraction depth, contraction-deletion depth, branch-depth of their cycle matroids are all tied each other.
To prove that, we use the following theorem.

\begin{THM}[Ding, Dziobiak, and Wu~{\cite[Proposition 3.8]{DDW2016}}]\label{thm:ddw}
  Let $G$ be a $3$-connected graph with a path of length $L$.
  Then $G$ has a minor isomorphic to  the wheel graph $W_k$
  with $k=\lfloor \frac1{6\sqrt2} \sqrt{\log (2L/5)}\rfloor$.
\end{THM}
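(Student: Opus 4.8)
Since this is a theorem of Ding, Dziobiak, and Wu~\cite{DDW2016}, I only indicate the idea. The plan is to reduce the statement to finding a large wheel minor inside a $3$-connected graph that contains a long \emph{cycle}, and then to carve the wheel out of such a cycle. First I would apply Dirac's theorem (Theorem~\ref{thm:dirac}): as $G$ is $3$-connected, hence $2$-connected, and has a path of length $L$, it contains a cycle $C$ of length at least $2\sqrt L$. Thus it suffices to show that a $3$-connected graph with a cycle of length $N$ has a $W_k$-minor with $k$ of order $\sqrt{\log N}$.

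The basic mechanism for producing $W_k$ as a minor is the following: if a cycle $C'$ is joined to a connected subgraph $H$, disjoint from $C'$, by $k$ internally disjoint paths whose endpoints on $C'$ occur in this cyclic order, then contracting $H$ to a single vertex, contracting each path to an edge, and contracting the $k$ arcs of $C'$ between consecutive endpoints yields exactly $W_k$. In particular, a single bridge of $C$ with at least $k$ attachment vertices already gives a $W_k$-minor, by taking $H$ to be the interior of the bridge. When no bridge is that large, one uses $3$-connectivity: any way of cutting $C$ into two long arcs by a pair of vertices is bypassed by a path avoiding that pair, so $C$ has a rich supply of ears, and one combines several such ears into a common hub, passing to a sub-arc of $C$ and to a suitable $3$-connected minor whenever the ears are badly spread. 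At each round the relevant cycle length drops by roughly a factor equal to the current number of spokes, so after balancing the number of rounds against this shrinkage one gets about $\sqrt{\log N}$ spokes; tracking the constants through the bound coming from $2\sqrt L$ produces $k=\lfloor \frac{1}{6\sqrt 2}\sqrt{\log(2L/5)}\rfloor$.

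The hard part will be the disjointness and connectivity bookkeeping: one must guarantee that every newly chosen leg is internally disjoint both from the cycle and from all previously chosen legs, so that the contractions genuinely produce $W_k$, while at the same time retaining enough connectivity at each stage (by restricting to $3$-connected minors) that $3$-connectivity keeps supplying new ears. Organizing this into a clean recursion with the explicit constant is precisely the technical content of~\cite[Proposition~3.8]{DDW2016}, and I would not attempt to reconstruct that constant here.
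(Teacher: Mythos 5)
This statement is quoted from Ding, Dziobiak, and Wu~\cite[Proposition 3.8]{DDW2016}; the paper gives no proof of it, so there is no in-paper argument to compare your sketch against. Judged as a standalone proof, your proposal has a genuine gap, and you acknowledge it: the entire quantitative core --- the recursion that turns a long cycle in a $3$-connected graph into $\Theta(\sqrt{\log})$ many spokes with the explicit constant --- is deferred to the citation, so what you have written is an outline of a plausible mechanism (bridge with many attachments, or many internally disjoint legs to a common hub, contracted to a wheel), not a proof.

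One concrete point where your outline likely diverges from the actual argument: you propose to start with Dirac's theorem (Theorem~\ref{thm:dirac}), which only gives a cycle of length $2\sqrt{L}$, and then it is not clear you recover the stated bound --- that route yields an expression of the form $\sqrt{\log(2\sqrt{L})}=\sqrt{1+\tfrac12\log L}$ inside the wheel bound, which is asymptotically smaller by a factor $\sqrt2$ than $\sqrt{\log(2L/5)}$ and in any case does not literally produce the quantity in the statement. The term $2L/5$ inside the logarithm strongly suggests that the reduction from paths to cycles uses the linear circumference bound for $3$-connected graphs (a theorem of Bondy and Locke: a $3$-connected graph with a path of length $L$ has a cycle of length at least $2L/5$), after which a cycle-to-wheel argument with constant $\tfrac1{6\sqrt2}$ gives exactly the stated $k$. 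So if you intended to reconstruct the proof rather than cite it, you would need that stronger circumference bound in place of Dirac, together with the full bookkeeping of the recursive leg-finding step; within this paper, citing \cite{DDW2016} as the authors do is the intended treatment.
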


We will see that indeed, if a matroid contains the cycle matroid of a large fan
as a minor, then it has large branch-depth, thus implying that it has large contraction-deletion depth.
To see this, we will first show a connection between branch-depth of a binary matroid
and rank-depth of a simple bipartite graph.
The \emph{fundamental graph} of a binary matroid $M$ with respect to a base $B$
is a simple bipartite graph on $E(M)$ such that
$e\in B$ is adjacent to $f\in E(M)\setminus B$ if and only if
$(B\setminus\{e\})\cup \{f\}$ is independent.
It is well known that if $G$ is a fundamental graph of a binary matroid $M$,
then the cut-rank function of $G$ is identical to the connectivity function of $M$, see Oum~\cite{Oum2004}. Thus we deduce the following lemma.
\begin{LEM}\label{lem:fund}
  If $G$ is a fundamental graph of a binary matroid $M$, then
  the branch-depth of $M$ is equal to the rank-depth of $G$.
\end{LEM}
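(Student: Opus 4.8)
The plan is to observe that the branch-depth introduced in Section~\ref{sec:def} is an invariant of a connectivity function alone: for a \decomposition{} $(T,\sigma)$ of a connectivity function $\lambda$ on a ground set $E$, both the width of an internal node and the radius are computed purely from the combinatorial data $(T,\sigma)$ together with the values $\lambda(X)$ for $X\subseteq E$. Consequently, two connectivity functions on the same finite set that agree as functions admit exactly the same $(k,r)$-decompositions, and hence have the same branch-depth. So the whole lemma reduces to checking that the two connectivity functions in question literally coincide.

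First I would unwind the definitions being compared. By definition the branch-depth of the binary matroid $M$ is the branch-depth of its matroid connectivity function $\lambda_M\colon 2^{E(M)}\to\mathbb Z$, while the rank-depth $\rd(G)$ of the simple graph $G$ is the branch-depth of its cut-rank function $\rho_G\colon 2^{V(G)}\to\mathbb Z$. Since $G$ is a fundamental graph of $M$, its vertex set $V(G)$ is exactly the ground set $E(M)$, so $\rho_G$ and $\lambda_M$ are connectivity functions on the \emph{same} finite set and it makes sense to ask whether they are equal. Then I would invoke the cited fact (Oum~\cite{Oum2004}) that for a fundamental graph $G$ of a binary matroid $M$ one has $\rho_G(X)=\lambda_M(X)$ for every $X\subseteq E(M)$. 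Combined with the first paragraph, $\lambda_M$ and $\rho_G$ admit precisely the same $(k,k)$-decompositions, so the least $k$ for which such a decomposition exists agrees for the two functions; that is, $\bd(M)=\rd(G)$.

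There is essentially no obstacle here: all the content sits in the already-known identity $\rho_G=\lambda_M$, and the remainder is definition-chasing. The only subtlety worth stating explicitly is the first-paragraph remark that branch-depth depends solely on the connectivity function, not on any auxiliary structure of $M$ or of $G$; but this is immediate from the definitions of \decomposition{}, width, and radius in Section~\ref{sec:def}, which never refer to anything beyond the values of $\lambda$ on subsets of its ground set.
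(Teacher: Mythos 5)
Your proposal is correct and matches the paper's argument exactly: the paper likewise cites the identity $\rho_G=\lambda_M$ for a fundamental graph (Oum~\cite{Oum2004}) and then notes that branch-depth is determined by the connectivity function alone, so $\bd(M)=\rd(G)$. Your only addition is spelling out the definition-chasing step explicitly, which the paper leaves implicit.
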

Note that $P_{2n-1}$ is a fundamental graph of the cycle matroid of $F_n$.
By Proposition~\ref{prop:path},
$\bd(M(F_n))>\log (2n-1) / \log (1+4\log (2n-1))$ for $n\ge 2$.  This implies a weaker variant of the following theorem due to 
Dittmann and Oporowski~\cite{DO2002}, showing that 
if a graph has a large fan as a minor, then the contraction-deletion depth
of its cycle matroid is large as follows.

\begin{THM}[Dittmann and Oporowski~{\cite[Theorem 1.4]{DO2002}}]\label{thm:do}
  If a graph $G$ contains $F_n$ as a minor, then
  the contraction-deletion depth of $M(G)$ is at least $\lfloor\log_2 n \rfloor+1$.
\end{THM}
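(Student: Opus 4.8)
First I would prove, by induction on the number of edges, the following statement, which implies the theorem since $\operatorname{fan}(G)\ge n$: for every graph $H$ with at least one non-loop edge, $\cdd(M(H))\ge\lfloor\log_2\operatorname{fan}(H)\rfloor+1$, where $\operatorname{fan}(H)$ is the largest $m$ such that $F_m$ is a minor of $H$. The cases $\abs{E(H)}\le1$ are immediate. If $M(H)$ is disconnected, then $\cdd(M(H))$ is the maximum of $\cdd$ over its components; since $F_n$ is $2$-connected for $n\ge2$, a routine argument confines any $F_n$-minor of $H$ to a single block $B$ of $H$, and $B$ has fewer edges than $H$, so the induction hypothesis applied to $B$ handles this case.

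Next, assume $M(H)$ is connected with $\abs{E(H)}\ge2$, and set $n=\operatorname{fan}(H)\ge2$. Using $M(H\setminus e)=M(H)\setminus e$ and $M(H/e)=M(H)/e$, the defining recursion for $\cdd$ reads
\[
  \cdd(M(H))=1+\min_{e\in E(H)}\min\bigl(\cdd(M(H\setminus e)),\,\cdd(M(H/e))\bigr),
\]
and each of $H\setminus e$, $H/e$ has fewer edges than $H$. So by the induction hypothesis it is enough to establish the \emph{key inequality}: for every edge $e$ of $H$,
\[
  \min\bigl(\operatorname{fan}(H\setminus e),\,\operatorname{fan}(H/e)\bigr)\ \ge\ \lfloor n/2\rfloor .
\]
Granting this, the induction hypothesis gives $\cdd(M(H))\ge 2+\lfloor\log_2\lfloor n/2\rfloor\rfloor$, and since $2+\lfloor\log_2\lfloor n/2\rfloor\rfloor=\lfloor\log_2 n\rfloor+1$ for all $n\ge2$, the induction closes.

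To prove the key inequality I would fix a minor model of $F_n$ in $H$: pairwise disjoint connected branch sets $\{B_x:x\in V(F_n)\}$ in $H$ and, for each edge $xy$ of $F_n$, a chosen edge of $H$ joining $B_x$ and $B_y$. If $e$ is not one of the chosen edges, the same model still shows $F_n$ is a minor of $H\setminus e$, so the minimum is at least $n$. Otherwise $e$ is the chosen edge realising some edge $uw$ of $F_n$; then $H\setminus e$ has $F_n\setminus uw$ as a minor (drop that chosen edge) and $H/e$ has $F_n/uw$ as a minor (merge $B_u$ and $B_w$ along $e$), so it suffices to verify $\min(\operatorname{fan}(F_n\setminus uw),\operatorname{fan}(F_n/uw))\ge\lfloor n/2\rfloor$. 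This is a short, explicit computation with the two kinds of edges of $F_n$: if $uw$ is a rim edge $v_iv_{i+1}$, then $F_n\setminus uw$ has blocks isomorphic to $F_i$ and $F_{n-i}$ (reading $F_1=K_2$) while $F_n/uw$ still has an $F_{n-1}$-minor, so the minimum equals $\max(i,n-i)\ge\lceil n/2\rceil$; if $uw$ is a spoke $vv_i$, then $F_n\setminus uw$ still has an $F_{n-1}$-minor while $F_n/vv_i$ splits into blocks that are fans on $i-1$ and $n-i$ rim vertices, so the minimum equals $\max(i-1,n-i)\ge\lfloor n/2\rfloor$, the worst case being a central spoke, which attains $\lfloor n/2\rfloor$ exactly.

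The hard part is the key inequality, specifically the case where $e$ realises an edge of $F_n$: one must check carefully, using the structure of $F_n\setminus uw$ and $F_n/uw$ above together with the confinement of $2$-connected minors to blocks, that breaking the fan along one of its edges---by either deletion or contraction---cannot reduce the largest surviving fan minor below $\lfloor n/2\rfloor$. The other points that need care but are routine are reducing the disconnected case to a single block while staying within the category of graphs (so as to avoid $2$-isomorphism issues, since $F_n$ is not $3$-connected), and calibrating the bound $\lfloor n/2\rfloor$ so that the final arithmetic yields exactly $\lfloor\log_2 n\rfloor+1$.
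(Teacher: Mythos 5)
First, a point of reference: the paper does not prove this statement at all---it is quoted from Dittmann and Oporowski~\cite[Theorem 1.4]{DO2002} (the paper only derives a weaker variant via Proposition~\ref{prop:path} and Lemma~\ref{lem:fund})---so your proposal can only be judged on its own correctness. Your overall scheme is reasonable: inducting on $\abs{E(H)}$, confining a $2$-connected fan minor to a block in the disconnected case, using $\cdd(M(H))=1+\min_e\min(\cdd(M(H)\setminus e),\cdd(M(H)/e))$ for connected $M(H)$, and the arithmetic $2+\lfloor\log_2\lfloor n/2\rfloor\rfloor=\lfloor\log_2 n\rfloor+1$ all check out (you should also dispose of $\operatorname{fan}(H)=1$ separately, since a connected $M(H)$ with two elements need not contain $F_2$).

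The genuine gap is in the key inequality, in the case ``$e$ is not one of the chosen edges.'' There you only certify the deletion side; the conclusion ``so the minimum is at least $n$'' is false, because the minimum also involves $\operatorname{fan}(H/e)$, and contracting a \emph{non-chosen} edge can still merge two branch sets: $H$ may contain an edge joining $B_x$ and $B_y$ parallel to the chosen edge for $xy$, or---since $F_n$ is not complete---an edge joining $B_x$ and $B_y$ with $xy\notin E(F_n)$. Concretely, for $H=F_n$ plus the chord $v_1v_3$ and $e=v_1v_3$, the graph $H/e$ has only $n$ vertices, so $\operatorname{fan}(H/e)\le n-1<n$, and your argument gives no bound at all for such $e$, while the induction needs $\operatorname{fan}(H/e)\ge\lfloor n/2\rfloor$. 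The statement you need is true and the repair is short: contracting an arbitrary edge $e$ of $H$ either preserves the model (if $e$ lies inside a branch set or has an end outside all branch sets) or yields a model of the graph obtained from $F_n$ by \emph{identifying} $x$ and $y$. If $xy\in E(F_n)$ this is exactly the contraction case you already analyze; if $x,y$ are non-adjacent they are rim vertices $v_i,v_j$ with $j\ge i+2$, and the identified graph contains, through the identified vertex, fans on $j-i$ and on $n-j+i$ rim vertices, whose maximum is at least $\lceil n/2\rceil\ge\lfloor n/2\rfloor$. With this case added, your induction closes and the proof is correct.
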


Now we are ready to show that
for cycle matroids of $3$-connected graphs,
the tree-depth of graphs and 
the contraction depth, the contraction-deletion depth, and the branch-depth of their cycle matroids
are all tied each other.
\begin{PROP}
  Let $G$ be a $3$-connected graph of tree-depth $t$
  and let $k=\lfloor \frac1{6\sqrt{2}} \sqrt{\log (2(t-1)/5)}\rfloor$. Then
  \[
    \frac{\log (2k-1) }{\log (1+4\log (2k-1))}
    \le 
    \bd(M(G))
    \le 
      \cdd(M(G))\le \cd(M(G))\le 2^{2(t-1)}
    \]
    and
    \[\bd(M(G))\le t-1.\]
\end{PROP}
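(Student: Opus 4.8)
The plan is to read off the two upper bounds directly from Proposition~\ref{prop:td-to-cd} and to obtain the lower bound by locating the cycle matroid of a large fan as a minor of $M(G)$. For the upper bounds, observe that a $3$-connected graph is connected, so Proposition~\ref{prop:td-to-cd} applies verbatim and gives $\bd(M(G))\le\cdd(M(G))\le\cd(M(G))\le 2^{2(t-1)}$ together with $\bd(M(G))\le\bd(G)-1\le t-1$; nothing more is needed there.

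For the lower bound I would argue in four steps. First, a graph of tree-depth $t$ contains a path of length $t-1$, by \cite[Proposition 6.1]{NO2012} (the same fact invoked in the preceding propositions). Second, feeding this path into Theorem~\ref{thm:ddw} with $L=t-1$ produces a minor of $G$ isomorphic to the wheel $W_k$, where $k$ is \emph{exactly} the quantity $\lfloor\frac{1}{6\sqrt2}\sqrt{\log(2(t-1)/5)}\rfloor$ appearing in the statement. Third, deleting one rim edge of $W_k$ leaves the $k$-fan $F_k$ as a subgraph, so $F_k$ is a minor of $G$, hence $M(F_k)$ is a minor of $M(G)$; by the proposition asserting that the branch-depth of a matroid does not increase under taking minors, $\bd(M(F_k))\le\bd(M(G))$. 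Fourth, since $P_{2k-1}$ is a fundamental graph of the binary matroid $M(F_k)$, Lemma~\ref{lem:fund} gives $\bd(M(F_k))=\rd(P_{2k-1})$, and Proposition~\ref{prop:path} gives $\rd(P_{2k-1})>\frac{\log(2k-1)}{\log(1+4\log(2k-1))}$ as soon as $k\ge 2$. Concatenating these inequalities delivers the claimed lower bound. (When $k\le 1$ the left-hand side is nonpositive, or the displayed ratio degenerates, and the inequality holds trivially; so one may assume $t$ is large enough that $k\ge 2$, which also makes $W_k$ unambiguous.)

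I do not anticipate a genuine obstacle, since every ingredient is already available: path extraction from bounded tree-depth, the Ding--Dziobiak--Wu wheel-minor theorem (Theorem~\ref{thm:ddw}), minor-monotonicity of the branch-depth of matroids, the fundamental-graph identity (Lemma~\ref{lem:fund}), the path rank-depth bound (Proposition~\ref{prop:path}), and Proposition~\ref{prop:td-to-cd} for the upper bounds. The only points requiring care are bookkeeping: checking that the index $k$ returned by Theorem~\ref{thm:ddw} is literally the $k$ in the statement (same expression with $L=t-1$), verifying the inclusion $F_k\subseteq W_k$ in the indexing for which $P_{2k-1}$ is a fundamental graph of $M(F_k)$, and disposing of the small-$t$ cases where the logarithms are not meaningful.
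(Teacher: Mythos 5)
Your proposal is correct and follows essentially the same route as the paper: upper bounds read off from Proposition~\ref{prop:td-to-cd}, and the lower bound obtained by extracting a path of length $t-1$, applying Theorem~\ref{thm:ddw} to get a $W_k$ minor, noting $F_k$ is a minor of $W_k$, and then using minor-monotonicity of branch-depth together with Lemma~\ref{lem:fund} and Proposition~\ref{prop:path} via the fundamental graph $P_{2k-1}$ of $M(F_k)$. Your explicit handling of the degenerate small-$k$ case is a harmless addition the paper leaves implicit.
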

\begin{proof}
  This is trivial
  from Theorems~\ref{thm:ddw}
  and Propositions~\ref{prop:td-to-cd} and 
  because  $W_k$ contains $F_k$ as a minor.
\end{proof}

\subsection{Rank-depth is less than or equal to tree-depth}
Oum~\cite{Oum2006c} showed that the rank-width of a simple graph is less than or equal to the branch-width, by a reduction using matroids.
By the same method, we are going to show the following.
The \emph{incidence graph} of a graph $G$, denoted by $I(G)$, is the subdivision
of $G$ obtained by subdividing every edge of $G$ exactly once.
\begin{THM}\label{thm:rd-to-td}
  Let $G$ be a simple graph of tree-depth $t$. Then
  \[
    \rd(G)\le \rd(I(G))\le t.
  \]
\end{THM}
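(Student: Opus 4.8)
The plan is to prove the two inequalities separately, following the matroidal reduction of Oum~\cite{Oum2006c}.

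For the first inequality $\rd(G)\le\rd(I(G))$, I would show that $G$ is a vertex-minor of $I(G)$ and then apply Lemma~\ref{lem:vertexminor}. Recall that $I(G)$ is bipartite with parts $V(G)$ and $E(G)$, where the subdivision vertex $z_e$ of an edge $e=uv$ of $G$ has neighbourhood exactly $\{u,v\}\subseteq V(G)$. Performing local complementation at $z_e$ therefore only toggles the pair $uv$ and changes nothing in the neighbourhood of any other subdivision vertex; hence the local complementations at all subdivision vertices have, collectively, the effect of toggling, for each edge $e=uv$ of $G$, the pair $uv$, and nothing else. Since $G$ is simple, distinct edges yield distinct pairs, so after performing all of them we obtain a graph, locally equivalent to $I(G)$, in which $V(G)$ induces exactly $G$ and the subdivision vertices remain pairwise non-adjacent. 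Deleting all subdivision vertices leaves $G$; thus $G$ is a vertex-minor of $I(G)$, and $\rd(G)\le\rd(I(G))$.

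For the second inequality $\rd(I(G))\le t$, the key step is to recognize $I(G)$ as a fundamental graph of a graphic matroid. We may assume $V(G)\neq\emptyset$, as otherwise the claim is trivial. Let $G^{\bullet}$ be obtained from $G$ by adding a new vertex $z$ adjacent to every vertex of $G$; then $G^{\bullet}$ is connected. The star $B=\{zv:v\in V(G)\}$ is a spanning tree of $G^{\bullet}$, hence a base of $M(G^{\bullet})$, and for each edge $e=uv$ of $G$ the fundamental circuit of $e$ with respect to $B$ is the triangle on $\{z,u,v\}$. So, under the bijection $zv\mapsto v$ between $B$ and $V(G)$, the fundamental graph of $M(G^{\bullet})$ with respect to $B$ is exactly $I(G)$. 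By Lemma~\ref{lem:fund}, $\rd(I(G))=\bd(M(G^{\bullet}))$. Since $G^{\bullet}$ is connected, Proposition~\ref{prop:td-to-cd} gives $\bd(M(G^{\bullet}))\le\bd(G^{\bullet})-1\le\td(G^{\bullet})-1$, and attaching an optimal rooted forest for $G$ below a new root at $z$ shows $\td(G^{\bullet})\le\td(G)+1=t+1$. Combining, $\rd(I(G))=\bd(M(G^{\bullet}))\le t$, and together with the first inequality we obtain $\rd(G)\le\rd(I(G))\le t$.

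I expect the identification of $I(G)$ as the fundamental graph of $M(G^{\bullet})$ to be the conceptual heart of the argument (this is where Oum's method is invoked), while the more delicate bookkeeping will be verifying the vertex-minor claim: one must check carefully that the local complementations at the subdivision vertices do not interfere with one another and that, after their deletion, precisely the edges of $G$ survive among $V(G)$. Neither step requires a genuine calculation.
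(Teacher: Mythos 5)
Your proof is correct and follows essentially the same route as the paper: the same auxiliary graph $G'$ (your $G^{\bullet}$) with a dominating vertex, the identification of $I(G)$ as the fundamental graph of $M(G')$ with respect to the star base, Lemma~\ref{lem:fund} together with Proposition~\ref{prop:td-to-cd} for the upper bound, and Lemma~\ref{lem:vertexminor} via the vertex-minor claim for the lower bound. The only difference is that you spell out the local-complementation argument that the paper dismisses as easy, which is a welcome addition but not a new approach.
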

\begin{proof}
  Let $G$ be a simple graph of tree-depth $t$.
  Let $G'$ be a simple graph obtained from $G$ by adding a new vertex  $v$ adjacent to
  all vertices of $G$.
  Then $\td(G')\le t+1$.
  Let $T$ be a spanning tree of $G'$ consisting of all edges incident with $v$.
  Then $I(G)$ is the fundamental graph of $M(G')$  with respect to $E(T)$.
  It is easy to see that $G$ is a vertex-minor of $I(G)$.
  By Proposition~\ref{prop:td-to-cd} and Lemma~\ref{lem:fund}, 
  $\rd(I(G))=\bd(M(G'))\le \td(G')-1\le t$ because $G'$ is connected.
  By Lemma~\ref{lem:vertexminor},
  $\rd(G)\le \rd(I(G))\le t$.
\end{proof}

\section{Well-quasi-ordering of matroids of bounded contraction depth}
\label{sec:wqo}
In this section, we aim to prove that matroids representable over a fixed finite field having no large circuits are well-quasi-ordered by the matroid restriction.
\begin{THM}\label{thm:main-wqo}
  Let $\F$ be a finite field.
  Every class of $\F$-representable matroids of bounded contraction depth
  is well-quasi-ordered by restriction.
\end{THM}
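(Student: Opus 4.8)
The plan is to prove this by induction on the bound $d$ on the contraction depth, mirroring the structure of the classical proof that graphs of bounded tree-depth are well-quasi-ordered. By Theorem~\ref{thm:main-long-circuit}(i), having contraction depth at most $d$ is equivalent to having all circuits of size bounded by some $c=c(d)$, so throughout we work with $\F$-representable matroids whose circuits all have size at most $c$. The key structural observation is that if $M$ is connected with contraction depth at most $d\ge 1$, then by definition there is an element $e\in E(M)$ with $\cd(M/e)\le d-1$; equivalently, there is a ``few-element'' object we can peel off and recurse. But contraction depth is defined via contraction, whereas restriction (the quasi-order in the statement) only allows deletion, so the first task is to recast the recursion in a way compatible with restriction. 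The natural move is to use a spanning structure: since all cocircuits have bounded rank is \emph{not} assumed, I instead track, for each $M$, a small ``attachment set'' $A\subseteq E(M)$ together with the restriction $M|A$, chosen so that $M/A$ (or the pieces of $M$ obtained after removing the span of $A$) has strictly smaller contraction depth.

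Concretely, I would first reduce to connected matroids: a matroid of contraction depth at most $d$ has all components of contraction depth at most $d$, and a well-quasi-order on the components lifts to one on finite disjoint unions by Higman's lemma, so it suffices to well-quasi-order connected $\F$-representable matroids of contraction depth at most $d$. For the connected case, fix $e$ with $\cd(M/e)\le d-1$ and let $N=M/e$; its components $N_1,\dots,N_m$ each have contraction depth at most $d-1$. By the induction hypothesis each $N_i$ lies in a well-quasi-ordered class. The matroid $M$ is then reconstructed from $e$ and the $N_i$ together with bounded ``gluing data'': an $\F$-representation of $M$ is obtained from representations of the $N_i$ by lifting each contracted representation back along the one-dimensional quotient by $e$, and the lifting is specified by a vector over $\F$ for each element — but these vectors live in the span of $e$, a space of dimension $1$, so only finitely much choice is involved per element. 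To make an infinite sequence $M_1,M_2,\dots$ comparable, I would encode each $M_j$ as a finite labelled object: the (bounded-size, by boundedness of circuits together with a connectivity argument — here one uses that a connected matroid with all circuits of size $\le c$ and contraction depth $\le d$ has bounded... no: we must be careful, the ground set is \emph{not} bounded) --- so instead encode $M_j$ as the sequence of its $N_i$'s, each tagged by the finite gluing data, and apply Higman's lemma across the sequence of components, using the induction hypothesis to get a well-quasi-order on the tagged components.

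The main obstacle, and the point needing the most care, is exactly this reconstruction step: showing that finitely much local data (over the fixed finite field $\F$) suffices to recover $M$ from the $N_i=M/e$, \emph{and} that ``$N_i$ is a restriction of $N_i'$ respecting the gluing data'' implies ``$M$ is a restriction of $M'$''. The subtlety is that restriction in $M$ corresponds to restriction in $M/e$ only when $e$ is kept, so one should choose $e$ canonically (e.g. via a fixed linear order on a presentation) and argue that deletions in the $N_i$'s that avoid the attachment elements pull back to deletions in $M$ avoiding $e$. There is a standard device for this in the matroid minors literature (linked tree-decompositions / boundaried matroids), and I would adapt it: define a notion of \emph{boundaried $\F$-matroid} of bounded boundary size, prove that the induction hypothesis gives a well-quasi-order on boundaried $\F$-matroids of contraction depth at most $d-1$ (the boundary adds only finitely many labels, so Higman/the induction survives), and then show the boundaried sum operation that builds $M$ from $e$ and the $N_i$ is monotone with respect to the restriction order. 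Assembling these pieces — base case $d=0$ trivial, inductive step via boundaried sums, disjoint unions via Higman — completes the proof. The delicate bookkeeping is ensuring the boundary size stays bounded (it does: the boundary is essentially $\{e\}$ plus the at most $\mathrm{rank}$-bounded interface, and connectivity plus bounded circuit size controls this) and that the finite field makes the space of gluing data finite.
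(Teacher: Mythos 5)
Your proposal follows essentially the same route as the paper: induction on the contraction depth, Higman's lemma to reduce to connected objects, peeling off a single contraction element $e$, and exploiting that the gluing data over the fixed finite field amounts to one scalar per element. The step you defer to a ``boundaried'' device is exactly what the paper makes precise by proving a stronger statement about $Q$-labeled full-rank \emph{matrices} (with restriction defined via row operations and column deletions, so representations rather than abstract matroids are carried through the induction), normalizing $e$ to a unit column and absorbing the top-row scalar of every other column into the enlarged label set $Q\times\F$, ordered by $\le$ on $Q$ and equality on $\F$ --- still a well-quasi-order because $\F$ is finite.
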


The assumption on contraction depth is clearly necessary, because
$M(C_3)$, $M(C_4)$, $\ldots$ form an anti-chain.
We also need the condition on the representability over a fixed finite field;
there is an infinite anti-chain of matroids having bounded contraction depth
in Ding, Oporowski, and Oxley~\cite{DOO1995}.

To prove Theorem~\ref{thm:main-wqo}, we prove a stronger statement in terms of matrices.
For an $X\times Y$ matrix $N$ over a field $\F$, we write $E(N)=Y$ to denote the set of indexes for columns. For a subset $Y'$ of $Y$, we write $N[Y']$ to denote the submatrix of $N$ induced by taking columns indexed by $Y'$.
Two matrices $N_1$ and $N_2$ are \emph{isomorphic} if there exists a bijection  $\phi:E(N_1)\to E(N_2)$, called an \emph{isomorphism},  such that the column vector of $e$ in $N_1$ for $e\in E(N_1)$ is equal to the column vector of $\phi(e)$ in $N_2$. 
For a matrix $N$, we write $M(N)$ to denote the matroid represented by $N$,
meaning that the columns of $N$ are indexed by the elements of $E(M)$
and a set is independent in $M(N)$ if its corresponding set of column vectors is linearly independent.
The \emph{contraction depth} of  a matrix $N$ is defined to be the contraction depth of $M(N)$.

We say that a matrix $N_1$ is a \emph{restriction} of a matrix $N_2$ if we can apply elementary row operations to  $N_2[Y]$ for some $Y\subseteq E(N_2)$ and delete some zero rows to obtain  a matrix isomorphic to $N_1$.
Clearly if a matrix $N_1$ is a restriction of $N_2$, then $M(N_1)$ is a restriction of $M(N_2)$. (A matroid $N$ is a \emph{restriction} of a matroid $M$ if $N$ is obtained from $M$ by deleting some elements.)
A matrix is said to have \emph{full rank} if its row vectors are linearly independent.

Let $(Q,\le)$ be a quasi-order.
A \emph{$Q$-labeling} of a matrix $N$ is a function $f$ from $E(N)$ to $Q$. The pair $(N,f)$ is called a \emph{$Q$-labeled matrix}.
For two $Q$-labeled matrices $(N_1,f_1)$ and $(N_2,f_2)$, we write $(N_1,f_1)\preceq (N_2,f_2)$ if $N_1$ is a restriction of $N_2$ with an injective function $\phi:E(N_1)\to E(N_2)$ such that $f_1(e)\le f_2(\phi(e))$ for all $e\in E(N_1)$.

Let $\N^\F_k$ be the set of all matrices over $\F$ having full rank and contraction depth at most $k$ and
let $\N^\F_k(Q)$ be the set of all $Q$-labeled matrices over $\F$ having full rank and having contraction depth at most~$k$.

Now we are ready to state a proposition on well-quasi-ordering of $Q$-labeled matrices.

\begin{PROP}\label{prop:wqomatrix}
  Let $\F$ be a finite field. Then, $(\N^\F_k(Q),\preceq)$ is a well-quasi-order if $(Q,\le)$ is a well-quasi-order.
\end{PROP}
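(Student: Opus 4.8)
### Proof proposal

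The plan is to prove Proposition~\ref{prop:wqomatrix} by induction on $k$, using the standard machinery for well-quasi-ordering results of this flavor: Higman's lemma on finite sequences/words, and Kruskal-type tree-embedding arguments packaged through the contraction-depth recursion. The key structural observation is that the definition of contraction depth is itself recursive — a connected matroid of contraction depth $k$ becomes one of contraction depth $k-1$ after contracting a single well-chosen element, and a disconnected matroid is a ``disjoint union'' of its components — so a matrix $N \in \N^\F_k$ decomposes into a bounded-branching rooted tree of matrices in $\N^\F_{k-1}$ (one per component, after stripping off the contracted element at the root). I would make this precise by defining, for each connected $N$ of contraction depth $k$, a ``root element'' $e$ with $\cd(M(N)/e)\le k-1$, and recording the extra data needed to reconstruct $N$ from $N/e$: over a fixed finite field $\F$, contracting $e$ amounts to row-reducing so that $e$ is a unit vector and deleting its row, so the reconstruction data is the single deleted row, which lives in a \emph{finite} set once we fix coordinates appropriately (here is where having $\F$ finite is essential, exactly as in the statement).

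The main steps, in order, would be: (1) Base case $k=0$ (or $k\le 1$): matrices of contraction depth at most $1$ have all components of size at most one, so $(N,f)$ is essentially a finite multiset of single columns labeled by $Q$ together with their (finitely many, over $\F$) possible column vectors; a finite-alphabet Higman argument gives the wqo. (2) Inductive step, disconnected case: if the matrices split into components, apply the inductive structure to each component — but since one matrix can have unboundedly many components, I would invoke Higman's lemma to the \emph{sequence} of components, each of which (being connected of contraction depth $\le k$) is handled by the connected case; the catch is that embedding must respect the fact that restriction of $M(N)$ acts on each component independently, which matches Higman's lemma cleanly. (3) Inductive step, connected case: encode a connected $(N,f)\in\N^\F_k(Q)$ as its root element $e$, the reconstruction row $\beta_e$ over $\F$ (finite data), and the $Q'$-labeled matrix $N/e \in \N^\F_{k-1}(Q')$, where $Q' = Q$ augmented by finite bookkeeping (e.g. recording, for each surviving column, whether it was parallel to or modified by $e$, plus $\beta_e$'s entry) — crucially $Q'$ is a wqo because it is $Q$ times a finite set. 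Then apply the induction hypothesis to $\N^\F_{k-1}(Q')$ and check that an embedding at level $k-1$ lifts to an embedding at level $k$: one must verify that if $N_1/e_1$ embeds into $N_2/e_2$ respecting the augmented labels, then re-attaching the roots gives $N_1$ as a restriction of $N_2$. This lifting verification — showing that a row-operation/column-deletion witnessing the restriction downstairs, together with compatible root data, assembles into one upstairs — is the technical heart.

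The step I expect to be the main obstacle is exactly this \textbf{lifting of embeddings across the contraction}: the restriction relation on matrices permits arbitrary elementary row operations on a chosen column subset, so ``$N_1$ is a restriction of $N_2$'' is genuinely a statement about row spaces, and I must make sure that the choice of root element and the finite reconstruction data are encoded in a way that is invariant enough that an abstract embedding of the smaller matrices is guaranteed to come from a global change of basis of the larger ones. Concretely, the danger is that the root element of $N_2$ restricts to something that is \emph{not} the root element of $N_1$ under the given embedding; to handle this I would either (a) quantify over all bounded-size choices of root (finitely many types) and push this finiteness into $Q'$, or (b) argue that the contraction-depth recursion can be set up canonically enough — perhaps via a minimal ``contraction tree'' — that the root is determined up to the finite data already recorded. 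A secondary obstacle is bounding the branching of the contraction tree: a priori a connected matroid has many candidate elements $e$ with $\cd(M/e)\le k-1$, but we only need \emph{one}, and its single deleted row over $\F$ is finite data, so I do not expect the branching itself to be unbounded once we commit to one root per node; the genuine care is only in the disconnected case, which Higman handles.
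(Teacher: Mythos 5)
Your proposal follows essentially the same route as the paper's proof: induction on $k$, Higman's lemma to reduce to connected matrices, contraction of a root element $e$ whose deleted-row entries are pushed into the labels via $Q'=Q\times\F$ (with equality required on the $\F$-coordinate), then the induction hypothesis and a lift back. The lifting step you flag as the main obstacle is in fact immediate with this encoding---one does not need the given embedding of $N_i/e_i$ into $N_j/e_j$ to respect any canonical choice of root, since one simply extends it by sending $e_i$ to $e_j$, and the root's own $Q$-label is handled by first passing to a subsequence with $f_1(e_1)\le f_2(e_2)\le\cdots$.
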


Before presenting the proof of Proposition~\ref{prop:wqomatrix},
we will discuss issues related to connectedness in matroids.
We say that two matrices are \emph{row equivalent} if one is obtained from the other
by a sequence of elementary row operations.
If two matrices $N_1$ and $N_2$ are row equivalent, then
$M(N_1)=M(N_2)$ and therefore $M_1\in\N^\F_k$ if and only if
$M_2\in\N^\F_k$.
Furthermore by the definition of the restriction,
a matrix $N'$ is a restriction of $N_1$
if and only if $N'$ is a restriction of $N_2$.

We say that  a matrix $N$ is the \emph{disjoint union} of two matrices $N_1$ and $N_2$
if \[N =
  \bordermatrix {
    & E(N_1) &   E(N_2)\cr
    & N_1 & 0 \cr
    & 0 & N_2
  }.\]
We say that a matrix is \emph{connected}
if it is not row equivalent to the disjoint union of two matrices.
(Permuting columns do not change matrices for us because columns are indexed by
their indices.)
It is easy to see the following.
\begin{LEM}
  Let $N$ be a full-rank matrix over $\F$.
  Then the matroid $M(N)$ is disconnected
  if and only if
  there exist two full-rank matrices $N_1$ and $N_2$
  such that $N$ is row equivalent to the disjoint union of $N_1$ and $N_2$. 
\end{LEM}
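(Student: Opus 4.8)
The plan is to translate between the matroid-theoretic notion of a separator and the linear-algebraic notion of a block decomposition of the column space, using the fact that $N$ has full rank. Recall that $M(N)$ is disconnected if and only if there is a nonempty proper subset $A$ of $E(N)$ with $\lambda_{M(N)}(A) = 0$, i.e. $r(A) + r(E\setminus A) = r(E)$. I would first prove the forward direction. Suppose $A$ is such a separator. Let $N$ have $m$ rows. Since $N$ has full rank, $r(E) = m$, and the column space of $N[A]$ is a subspace $V_A$ of dimension $r(A)$ while the column space of $N[E\setminus A]$ is a subspace $V_{E\setminus A}$ of dimension $r(E\setminus A)$; together they span $\F^m$ and have complementary dimensions, so $\F^m = V_A \oplus V_{E\setminus A}$. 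Choosing a basis adapted to this direct sum decomposition amounts to a change of basis of $\F^m$, i.e. a sequence of elementary row operations on $N$; after performing it, every column indexed by $A$ has its last $r(E\setminus A)$ coordinates equal to zero and every column indexed by $E\setminus A$ has its first $r(A)$ coordinates equal to zero. Thus $N$ becomes row equivalent to the disjoint union of an $r(A)\times \abs{A}$ matrix $N_1$ (the nonzero part of the $A$-columns) and an $r(E\setminus A)\times\abs{E\setminus A}$ matrix $N_2$. Each of $N_1$, $N_2$ has full rank: $N_1$ has rank $r(A)$ which equals its number of rows, and likewise for $N_2$.

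For the converse, suppose $N$ is row equivalent to the disjoint union of full-rank matrices $N_1$ and $N_2$, with $m_i$ the number of rows of $N_i$. Row operations do not change $M(N)$, so we may assume $N$ itself has this block form. Let $A = E(N_1)$ and $B = E(N_2)$; both are nonempty (a disjoint union as defined has both blocks present) and $A \cup B = E(N)$, $A\cap B=\emptyset$. Since every column in $A$ lies in the coordinate subspace spanned by the first $m_1$ standard basis vectors, $r(A) \le m_1$; and since $N_1$ has full rank, $r(A) = m_1$. Similarly $r(B) = m_2$. Then $r(A) + r(B) = m_1 + m_2 = m = r(E(N))$, because $N$ itself has full rank (being row equivalent to a matrix whose rows are clearly independent). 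Hence $\lambda_{M(N)}(A) = 0$ with $A$ a nonempty proper subset, so $M(N)$ is disconnected.

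I expect the only point requiring care — rather than a genuine obstacle — is the bookkeeping around the ``full rank'' hypotheses: one must check that the blocks $N_1$, $N_2$ produced in the forward direction are themselves full-rank (which is exactly why one restricts to their nonzero rows), and conversely that a disjoint union of full-rank matrices is full-rank so that $r(E(N)) = m_1 + m_2$. Both follow immediately from the dimension count above, so the proof is short; the substantive content is the passage $\lambda_{M(N)}(A)=0 \iff \F^m = V_A \oplus V_{E\setminus A}$, which is a direct consequence of the rank formula together with $r(E)=m$.
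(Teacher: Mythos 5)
Your proof is correct; the paper states this lemma without proof (``It is easy to see the following''), and your argument---translating $\lambda_{M(N)}(A)=0$ into the direct-sum decomposition $\F^m = V_A \oplus V_{E\setminus A}$ and realizing the adapted change of basis by elementary row operations, plus the reverse dimension count---is exactly the standard argument the authors have in mind. Your parenthetical insistence that both blocks of the disjoint union carry at least one column is the right reading of the paper's (slightly informal) definition, and it is precisely what the converse direction needs to produce a non-empty proper separator.
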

Like matroids, if a full-rank matrix $N$ is the disjoint union
of $N_1$, $N_2$, $\ldots$, $N_k$, then
we may call each $N_i$ a \emph{component} of $N$.
The following trivial lemma together with  Higman's lemma~\cite{Higman1952}
will allow us to reduce the proof of Proposition~\ref{prop:wqomatrix}
to connected matrices.
\begin{LEM}\label{lem:disjointunionmat}
  Let $N_1$, $N_2$, $N_1'$, $N_2'$ be
  full-rank matrices over $\F$.
  Let $N$ be a matrix row equivalent to the disjoint union of $N_1$ and $N_2$
  and $N'$ be a matrix row equivalent to the disjoint union of  $N_1'$ and $N_2'$.
  If $N_1'$ is a restriction of $N_1$
  and $N_2'$ is a restriction of $N_2$,
  then $N'$ is a restriction of $N$.
\end{LEM}
\begin{proof}
  We may assume that $N$ is the disjoint union of $N_1$ and $N_2$.
  We deduce the conclusion because the disjoint union of $N_1'$ and $N_2'$
  is clearly  a restriction of $N$.
\end{proof}
\begin{proof}[Proof of Proposition~\ref{prop:wqomatrix}]
  We proceed by induction on $k$. If $k=0$, then
  the only matrix of contraction depth at most $k$ is a zero matrix.
  But there is only one zero matrix of full rank.
  And so $\N^\F_0$  is trivially well-quasi-ordered by $\preceq$.
  So we may assume $k>0$.

  By Higman's lemma \cite{Higman1952} and Lemma~\ref{lem:disjointunionmat},
  it is enough to prove that connected matrices in $\N^\F_k(Q)$ are well-quasi-ordered
  by $\preceq$.
  Let $(N_1,f_1)$, $(N_2,f_2)$, $\ldots$ be an infinite sequence of $Q$-labeled connected matrices in $\N^\F_k(Q)$. We claim that there exist $i<j$ such that $(N_i,f_i)\preceq (N_j,f_j)$. By the induction hypothesis, we may assume that all $N_i$ have contraction depth exactly $k$. 

  Let $e_i$ be an element of $E(N_i)$ such that each component of $M(N_i)/e_i$ has contraction depth at most $k-1$. By taking an infinite subsequence, we may assume that $f_1(e_1)\le f_2(e_2)\le f_3(e_3)\le \cdots$.

  Now let us construct a matrix representing $M(N_i)/e_i$.
  Because $M(N_i)$ is connected, $\{e_i\}$ is independent in $M(N_i)$. By applying elementary row operations, we may assume that the column vector for $e_i$ is represented by a vector with one $1$ on the top row and $0$'s on all other rows. 
  We may further assume by applying elementary row operations to non-top rows such that $N_i$ is of the following form
\[
N_i=
\begin{pmatrix}
  1  &  ? \\
  0 & N_{i}'
\end{pmatrix}.
\]
As $N_i$ has full rank, $N_i'$ has full rank.
By the choice of $e_i$, each $N_{i}'$ has contraction depth at most $k-1$ and therefore $N_{i}'\in \N^\F_{k-1}$.

Let $Q'=Q\times \F$ and let us define an order $\le$ on $Q'$ such that $(x_1,x_2)\le (y_1,y_2)$ if and only if $x_1\le y_1$ and $x_2=y_2$. Then $(Q', \le)$ is a quasi-order if $(Q,\le)$ is a quasi-order.

For $e\in E(N_i)\setminus\{e_i\}$, 
let us define $f_{i}'(e)=(f_{i} (e), (N_i)_{1,e})\in Q'=Q\times \F$ where $(N_i)_{1,e}$ means the entry of $N_i$ in the top row and the column of $e$.
Then $(N_{i}', f_{i}')\in \N^\F_{k-1}(Q')$.
  
By the induction hypothesis, the set $\{(N_i,f_{i}'):i\in \{1,2,\ldots\}\}\subseteq \N_{k-1}^\F(Q')$ is well-quasi-ordered by $\preceq$.
Thus, there exist $i<j$ such that $(N_{i}',f_{i}')\preceq (N_{j}',f_{j}')$.
It follows easily that $(N_i,f_i)\preceq (N_j,f_j)$.
\end{proof}

Now we will see why Proposition~\ref{prop:wqomatrix} implies Theorem~\ref{thm:main-wqo} in a stronger form.
A $Q$-labeled matroid is a pair $(M,f)$ of a matroid $M$ and $f:E(M)\to Q$. A $Q$-labeled matroid $(M_1,f_1)$ is a \emph{restriction} of a $Q$-labeled matroid $(M_2,f_2)$ if $M_1$ is isomorphic to $M_2|X$ for some $X$ with an isomorphism $\phi$ such that $f_1(e)\le f_2(\phi(e))$ for all $e\in M$. 

Now we are ready to prove Theorem~\ref{thm:main-wqo}.
\begin{proof}
  We prove a stronger statement. Let $(Q,\le)$ be a well-quasi-order. We claim that
  $Q$-labeled $\F$-representable matroids of contraction depth at most $k$ are well-quasi-ordered by restriction. 

  Suppose that $(M_1,f_1)$, $(M_2,f_2)$, $\ldots$ are $Q$-labeled $\F$-representable matroids of contraction depth at most $k$.
  For each $i$, we can choose a matrix~$N_i$ over $\F$ having full rank such that $M(N_i)=M_i$. By Proposition~\ref{prop:wqomatrix}, there exist $i<j$ such that $(N_i,f_i)\preceq (N_j,f_j)$. It follows that $(M_i,f_i)$
  is isomorphic to a restriction of $(M_j,f_j)$.
\end{proof}

Theorem~\ref{thm:main-wqo} allows us to characterize
well-quasi-ordered classes of $\F$-representable matroids over any
fixed finite field $\F$,
because 
$\{U_{n-1,n}:n=2,3,\ldots\}=\{M(C_n):n=2,3,\ldots\}$ is an anti-chain with respect to the restriction.
\begin{COR}
  Let $\F$ be a finite field.
  Let $\mathcal I$ be a class of $\F$-representable matroids closed under taking the restriction.
  Then the following are equivalent.
  \begin{enumerate}[(i)]
  \item   $\mathcal I$ is well-quasi-ordered by restriction.
  \item 
    $\{U_{n-1,n}:n=2,3,\ldots\}\not\subseteq \mathcal I$.
  \item The contraction depth of $\mathcal I$ is bounded.
  \end{enumerate}
\end{COR}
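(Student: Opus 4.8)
The plan is to prove the cyclic chain of implications $(iii)\Rightarrow(i)\Rightarrow(ii)\Rightarrow(iii)$. The implication $(iii)\Rightarrow(i)$ requires no work beyond what is already available: a class $\mathcal I$ as in $(iii)$ consists of $\F$-representable matroids and has bounded contraction depth, so Theorem~\ref{thm:main-wqo} applies directly and tells us that $\mathcal I$ is well-quasi-ordered by the restriction.

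For $(i)\Rightarrow(ii)$ I would argue the contrapositive, and the heart of the matter is the claim that $\{U_{n-1,n}:n=2,3,\ldots\}$ is an infinite antichain with respect to the restriction order. To see this, note that every subset of size at most $n-1$ of the ground set of $U_{n-1,n}$ is independent; hence deleting any nonempty set of elements from $U_{n-1,n}$ produces a free matroid $U_{j,j}$ with $j<n$, which, having no circuit, is not isomorphic to any $U_{m-1,m}$. Therefore $U_{m-1,m}$ is isomorphic to a restriction of $U_{n-1,n}$ precisely when $m=n$, so the sequence $U_{1,2},U_{2,3},U_{3,4},\dots$ has no pair $i<j$ with $U_{i,i+1}$ a restriction of $U_{j,j+1}$. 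Since each $U_{n-1,n}=M(C_n)$ is $\F$-representable, if $\{U_{n-1,n}:n\ge2\}\subseteq\mathcal I$ then $\mathcal I$ contains this infinite antichain and so is not well-quasi-ordered, contradicting $(i)$.

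For $(ii)\Rightarrow(iii)$ I would use the contrapositive once more: suppose the contraction depth of $\mathcal I$ is unbounded. By Theorem~\ref{thm:cd}, which gives $\log_2 c\le\cd(M)$ when $c$ is the size of a largest circuit of $M$, the matroids in $\mathcal I$ have circuits of unbounded size. If $M\in\mathcal I$ has a circuit $C$, then by definition every proper subset of $C$ is independent, so $M|C$ is isomorphic to $U_{\abs C-1,\abs C}$; since $\mathcal I$ is closed under restriction, $U_{\abs C-1,\abs C}\in\mathcal I$. Hence $U_{c-1,c}\in\mathcal I$ for arbitrarily large $c$, and invoking closure under restriction we conclude that $U_{n-1,n}\in\mathcal I$ for every $n\ge2$; that is, $(ii)$ fails.

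The step I expect to need the most care is the passage in the last paragraph from ``$\mathcal I$ contains arbitrarily large circuits'' to ``$\mathcal I$ contains the whole family $\{U_{n-1,n}:n\ge2\}$'', where closure under restriction must be used precisely; the verification of the antichain claim in the second paragraph is elementary but is the only genuinely combinatorial ingredient, and the rest is bookkeeping around Theorems~\ref{thm:cd} and~\ref{thm:main-wqo}.
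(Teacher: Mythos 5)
Your overall route is the one the paper intends: the paper justifies the corollary in one sentence by combining Theorem~\ref{thm:main-wqo} with the observation that $\{U_{n-1,n}:n\ge 2\}=\{M(C_n):n\ge 2\}$ is an antichain under restriction, with Theorem~\ref{thm:cd} supplying the link between contraction depth and circuit size. Your $(iii)\Rightarrow(i)$, your $(i)\Rightarrow(ii)$, and your verification of the antichain property are correct. The genuine gap is the closing inference of your $(ii)\Rightarrow(iii)$ argument: from ``$U_{c-1,c}\in\mathcal I$ for arbitrarily large $c$'' you cannot ``invoke closure under restriction'' to conclude that $U_{n-1,n}\in\mathcal I$ for every $n\ge 2$. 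You proved the exact opposite two paragraphs earlier: every proper restriction of $U_{c-1,c}$ is a free matroid $U_{j,j}$, so restriction-closure never yields $U_{n-1,n}$ with $2\le n<c$, and a matroid with a circuit of size $c$ need not have circuits of all smaller sizes. So nothing in your argument places the smaller members of the family inside $\mathcal I$, and the step collapses.

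Moreover, under the literal reading of (ii) this step cannot be repaired. Let $\mathcal I$ consist of all free matroids $U_{j,j}$ together with all $M(C_{2n})=U_{2n-1,2n}$ for $n\ge 1$: this class is representable over every field and closed under restriction (proper restrictions of circuits are free), it omits $U_{2,3}$, so $\{U_{n-1,n}:n\ge 2\}\not\subseteq\mathcal I$ and (ii) holds, yet by Theorem~\ref{thm:cd} its contraction depth is unbounded ($\cd(U_{2n-1,2n})\ge \log_2 (2n)$) and it is not well-quasi-ordered, since the even circuits form an infinite antichain. What your argument does establish, and what makes the three conditions genuinely equivalent, is the version in which (ii) is read as ``only finitely many of the matroids $U_{n-1,n}$ belong to $\mathcal I$'': unbounded contraction depth then gives, via Theorem~\ref{thm:cd} and $M|C\cong U_{\abs{C}-1,\abs{C}}$ together with closure under restriction, infinitely many members of the family in $\mathcal I$, negating (ii); and $(i)\Rightarrow(ii)$ survives because an infinite subfamily of an antichain is still an infinite antichain. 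Either adopt that formulation and delete the invalid final sentence, or the implication $(ii)\Rightarrow(iii)$ as you state it does not go through.
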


\providecommand{\bysame}{\leavevmode\hbox to3em{\hrulefill}\thinspace}
\providecommand{\MR}{\relax\ifhmode\unskip\space\fi MR }
\providecommand{\MRhref}[2]{%
  \href{http://www.ams.org/mathscinet-getitem?mr=#1}{#2}
}
\providecommand{\href}[2]{#2}

\end{document}